\documentclass[11pt]{article}
\usepackage{pgfplots}
\pgfplotsset{compat=1.17}
\usepgfplotslibrary{fillbetween}
\usepackage[margin =1in]{geometry}
\usepackage{amssymb,amsthm}
\usepackage{amsmath}
\numberwithin{equation}{section}

\newcommand{\R}{{\bf R}}

\newcommand{\norm}[1] {\left \| #1 \right \|}
\newcommand{\inclu}[0] {\ar@{^{(}->}}

\newcommand{\dist}{{\rm dist}}

\newcommand{\RR}{\mathbb{R}}

\newcommand{\st}{\text{subject to: }}

\newcommand{\cX}{\mathcal{X}}

\newcommand{\ceil}[1]{\left \lceil #1 \right \rceil }
\newcommand{\floor}[1]{\left \lfloor #1 \right \rfloor }

\newcommand{\argmin}{\operatornamewithlimits{argmin}}

\newcommand{\argmax}{\text{argmax}}

\newcommand{\dotp}[1]{\left\langle #1\right\rangle}

\newtheorem{theorem}{Theorem}[section]

\newtheorem{definition}[theorem]{Definition}
\newtheorem{proposition}[theorem]{Proposition}
\newtheorem{lemma}[theorem]{Lemma}
\newtheorem{corollary}[theorem]{Corollary}

\newtheorem{assumption}{Assumption}

\newcommand{\iscert}{\texttt{is\_cert}}

\newtheorem{remark}{Remark}

\theoremstyle{remark}

\usepackage{mathtools}

\global\long\def\norm#1{\|#1\|}%
\global\long\def\inner#1#2{\langle#1,#2\rangle}%
\global\long\def\argmin{\arg\min}%
\global\long\def\argmax{\arg\max}%
\global\long\def\R{\mathbb{R}}%
\global\long\def\ceil#1{\lceil#1\rceil}%
\global\long\def\R{\mathbb{R}}%
\global\long\def\ceil#1{\lceil#1\rceil}%
\global\long\def\L{\mathcal{L}}%

\global\long\def\floor#1{\lfloor#1\rfloor}%
\global\long\def\dist{\text{dist}}%

\usepackage{xcolor}
\usepackage{cite}
\usepackage{hyperref}
\usepackage{algorithm}
\usepackage{enumitem}
\usepackage{algpseudocode}
\usepackage{caption}
\usepackage{subcaption}
\usepackage{graphicx}
\usepackage{algorithm}
\usepackage{algpseudocode}
\usepackage{float}
\usepackage{babel}
\usepackage{verbatim}
\usepackage{mathtools}
\usepackage{booktabs}
\usepackage{enumitem}
\usepackage{bm}
\usepackage{algorithm, algpseudocode, multirow}
\usepackage{amsmath}
\numberwithin{equation}{section}
\usepackage{amsthm}
\usepackage{amssymb}
\usepackage{minitoc}
\usepackage{color,xcolor}

\title{Optimal Parameter-Free First-Order Methods for Convex Optimization with Unknown Growth and Smoothness}

\author{Liwei Jiang\thanks{Edwardson School of Industrial Engineering, Purdue University,
		West Lafayette, IN 47907, USA;
		\texttt{https://liwei-jiang97.github.io/}. }
	\and Ke Tang \thanks{Edwardson School of Industrial Engineering, Purdue University,
		West Lafayette, IN 47907, USA. }
	\and Zhe Zhang\thanks{Edwardson School of Industrial Engineering, Purdue University,
		West Lafayette, IN 47907, USA;
		\texttt{https://sites.google.com/view/jimmy-zhe-zhang/home}.}
}
\date{\today}

\begin{document}

\maketitle
\begin{abstract}
	We study deterministic first-order minimization of a  convex function without prior knowledge of the objective's growth,
	smoothness regime,  or associated parameters. We develop anytime, parameter-free
	bundle-level methods that adapt simultaneously to these unknown properties and
	attain best-known oracle complexities. For nonsmooth $M_0$-Lipschitz objectives satisfying
	quadratic growth with modulus $\mu$, the proposed bundle-level W-certificate
	method (BLW) achieves an oracle complexity of
	$O(M_0^2/(\mu\epsilon))$, matching the optimal rate without requiring the
	growth modulus or target accuracy as input. We then introduce an accelerated
	variant, A-BLW. Without knowing the H\"older exponent $\rho$, the smoothness
	constant $M_\rho$, the quadratic-growth modulus, or the target accuracy, A-BLW
	attains the optimal rates in the nonsmooth, weakly smooth, and smooth regimes.
	Central to both methods is an affine W-certificate, a condition based on the
	descent-slowness of an affine minorant that converts the geometry of a bundle
	model into an optimality-gap guarantee under quadratic growth. A stopping-time
	analysis further shows that the same A-BLW algorithm, without modification,
	achieves the corresponding best-known rates for general convex objectives and for
	objectives satisfying H\"older growth of order $\alpha\ge 2$. Numerical
	experiments illustrate the practical performance of the
	proposed methods.
\end{abstract}

\section{Introduction}
Consider the convex optimization problem
\begin{align}\label{eqn:opt}
	f^* = \min_{x \in X} f(x),
\end{align}
where $X$ is a closed convex set and $f: X \rightarrow \RR$ is a closed convex function. Let $X^*$ denote the set of minimizers of $f$ on $X$. We assume that $X^*$ is nonempty. We consider the classical black-box setting: an algorithm can query a first-order oracle at any point $x \in X$, and the oracle returns the function value $f(x)$ together with a subgradient $f'(x) \in \partial f(x)$. When $f$ happens to be  differentiable at $x$, the oracle returns a gradient, i.e., $f'(x) = \nabla f(x)$.

A vast literature studies first-order methods for convex optimization under regularity conditions that quantify the smoothness of the objective function. Many standard smoothness assumptions can be unified through H\"older smoothness. Formally, we say that $f$ is $(\rho,M_\rho)$-H\"older smooth on $X$, for some $\rho \in [0,1]$ and $M_\rho \ge 0$, if, for every $x,y\in X$ and every $f'(x)\in\partial f(x)$,
\begin{align*}
	0\le f(y)-f(x)-\langle f'(x),y-x\rangle
	\le \frac{M_\rho}{1+\rho}\|y-x\|^{1+\rho}.
\end{align*}
This model-error condition interpolates between the smooth and nonsmooth regimes. When $\rho=1$, it gives the usual quadratic model-error bound satisfied by functions with Lipschitz-continuous gradients. When $\rho=0$, it becomes a linear model-error bound that covers standard nonsmooth Lipschitz objectives; in particular, if all subgradients have norm at most $L$, then the condition holds with $M_0=2L$. The intermediate regime $\rho \in (0,1)$ is often called weak smoothness. Under these assumptions, one can design optimal first-order methods when the relevant problem parameters are known~\cite{complexity}. Moreover, there exist ``universal'' methods that adapt to unknown smoothness parameters while still achieving the optimal rate; see, for example,~\cite{lan2015bundle, nesterov2015universal,li2025simple}.

In practice, however, the observed convergence rate is often faster than the worst-case rates guaranteed by the aforementioned results, a phenomenon also noted in their numerical experiments~\cite{li2025simple}. One explanation is that practical objectives often satisfy a growth condition, which is also known as the  error bound and is closely related to the Kurdyka--{\L}ojasiewicz property~\cite{bolte2007lojasiewicz}. For illustrative purposes, we begin with the important special case of quadratic growth and then discuss the more general H\"older growth condition. We say that $f$ satisfies quadratic growth with modulus $\mu$ if
\begin{align*}
	f(x) - f^* \ge \frac{\mu}{2} \dist^2(x,\cX^*)
	\quad \text{for all $x \in \{z \in X \colon f(z) \le f(\bar x_0)\}$},
\end{align*}
where $\bar x_0$ denotes the initial point. Despite its appeal, there remains a long-standing gap between parameter-aware and parameter-free methods for exploiting this property, even in the nonsmooth case $\rho=0$. On the one hand, when $M_0$ and the true value of $\mu$ are known to the user, the projected  subgradient method with a properly chosen stepsize achieves the optimal complexity $
	O\left(\frac{M_0^2}{\mu \epsilon}\right)$~\cite{bubeck2015convex}. On the other hand, existing parameter-free methods fail to attain this optimal complexity when the growth modulus $\mu$ is unknown a priori. Thus, a gap remains between methods that require prior knowledge of $\mu$ and those that do not. Nesterov~\cite{nesterov2025universal} recently highlighted this open question once again as the ``most interesting direction.''

In this work, we answer this open question. We first develop an optimal, anytime, parameter-free first-order method for nonsmooth objectives with quadratic growth. Our method can be viewed as a variant of the bundle-level method~\cite{lemarechal1995new}, equipped with a new termination criterion that exploits the quadratic growth condition. This criterion enables the method to achieve the optimal complexity $
	O\left(\frac{M_0^2}{\mu \epsilon}\right)$
without requiring any knowledge of the problem parameters. Building on this result, we design an anytime universal method for functions satisfying both quadratic growth and H\"older smoothness conditions. The method requires no prior knowledge of either the problem regime or the relevant problem parameters, yet achieves optimal rates across all regimes. Unlike  existing universal methods in the convex setting $(\mu=0)$~\cite{nesterov2015universal, li2025simple}, our parameter-free method is genuinely anytime: it does not require the target accuracy as an input.

Finally, we extend our universal guarantees  beyond the quadratic-growth regime to two broader settings: general convex problems with no growth condition, and problems satisfying an $\alpha$-H\"older growth condition $(\alpha \ge 2)$ of the form
\begin{align*}
	f(x)-f^* \ge \mu_\alpha\,\operatorname{dist}^\alpha(x,X^*),  \quad \text{for all $x \in \{z \in X \colon f(z) \le f(\bar x_0)\}$}.
\end{align*}
The resulting method is universal in a stronger sense: without knowing $\mu_\alpha$, $\alpha$, the H\"older smoothness regime or parameters, or the target accuracy, it matches the best-known first-order complexity bound for the corresponding problem class.  Table~\ref{tab:section42-rates} shows the oracle complexity of our proposed method. In the row $\rho\in(0,1)$, the notation $O_\rho(\cdot)$ allows constants depending on the fixed smoothness exponent $\rho$. In the $\rho=1$, $\alpha>2$ entry, $O_\alpha(\cdot)$ allows constants depending only on the fixed growth exponent $\alpha$.

\begin{table}[H]
	\centering
	\small
	\newcommand{\ratecorner}{%
		\begin{tikzpicture}[baseline=(current bounding box.center)]
			\useasboundingbox (0,0) rectangle (3.7cm,1.25cm);
			\draw[line width=0.35pt] (0,1.25) -- (3.7,0);
			\node[anchor=north east,inner sep=1.5pt] at (3.55,1.14) {Growth};
			\node[anchor=south west,inner sep=1.5pt] at (0.12,0.10) {Smoothness};
		\end{tikzpicture}%
	}
	\renewcommand{\arraystretch}{2.1}
	\resizebox{\textwidth}{!}{%
		\begin{tabular}{@{}c@{}|ccc@{}}
			\hline
			\ratecorner
			 & General convex
			 & $\alpha=2$
			 & $\alpha>2$     \\
			\hline
			$\rho=0$
			 &
			$\displaystyle O\!\left(\frac{M_0^2\dist^2(\bar x_0,X^*)}{\epsilon^2}\right)$
			 &
			$\displaystyle O\!\left(\frac{M_0^2}{\mu_2\epsilon}\right)$
			 &
			$\displaystyle O\!\left(\frac{M_0^2}{\mu_\alpha^{2/\alpha}\epsilon^{2(\alpha-1)/\alpha}}\right)$
			\\
			$\rho\in(0,1)$
			 &
			$\displaystyle O_\rho\!\left(
				\left(\frac{M_\rho^2\dist^{2(1+\rho)}(\bar x_0,X^*)}{\epsilon^2}\right)^{\frac{1}{1+3\rho}}
				\right)$
			 &
			$\displaystyle O_\rho\!\left(
				\left(\frac{M_\rho^2}{\mu_2^{1+\rho}\epsilon^{1-\rho}}\right)^{\frac{1}{1+3\rho}}
				\right)$
			 &
			$\displaystyle O_\rho\!\left(
				\left(\frac{M_\rho^2}{\mu_\alpha^{2(1+\rho)/\alpha}\epsilon^{2(\alpha-1-\rho)/\alpha}}\right)^{\frac{1}{1+3\rho}}
				\right)$
			\\
			$\rho=1$
			 &
			$\displaystyle O\!\left(\frac{\sqrt{M_1}\,\dist(\bar x_0,X^*)}{\sqrt{\epsilon}}\right)$
			 &
			$\displaystyle O\!\left(\sqrt{\frac{M_1}{\mu_2}}\log\frac{1}{\epsilon}\right)$
			 &
			$\displaystyle O_\alpha\!\left(
				\frac{\sqrt{M_1}}{\mu_\alpha^{1/\alpha}\epsilon^{(\alpha-2)/(2\alpha)}}
				\right)$
			\\
			\hline
		\end{tabular}%
	}
	\caption{Oracle complexities for finding a point $\bar x$ with $f(\bar x)-f^*\le\epsilon$.}
	\label{tab:section42-rates}
\end{table}

The rest of the paper is organized as follows. We close this section with related work and notation. Section~\ref{sec:affine_certificate} introduces the affine W-certificate and proves how it  yields an optimality-gap bound under quadratic growth. Section~\ref{sec:bundle level} builds on this certificate to develop BLW, a parameter-free method for nonsmooth convex functions with quadratic growth, together with its optimal anytime complexity guarantee. Section~\ref{sec:ablw-holder} develops the accelerated A-BLCI subroutine and the A-BLW method and establishes the uniform optimality of A-BLW.

\subsection{Related work}\label{sec:related_work}
We discuss the results in deterministic optimization that are most closely related to our paper, organized under two themes: parameter-free optimization and algorithms that exploit growth conditions. These two themes have developed largely in parallel. The former focuses on removing the need to know smoothness constants, Lipschitz constants, distances to the solution set, or other tuning parameters. The latter focuses on exploiting error bounds, sharpness, quadratic growth, or more general H\"olderian growth conditions to obtain faster rates than those guaranteed in the worst-case convex setting. Our work lies at the intersection of these two directions: we aim to exploit growth without knowing the growth parameters, the smoothness regime, the smoothness constants, or the target accuracy.

\paragraph{Parameter-free optimization.}
A first line of related work concerns universal methods for convex optimization under H\"older smoothness. Lan's accelerated bundle-level methods provide an important route to uniform optimality, achieving optimal complexity bounds for convex optimization without requiring the user to specify the smoothness parameters~\cite{lan2015bundle}.  Nesterov's fast gradient method is another parameter-free strategy that also recovers the optimal rates for smooth, weakly smooth, and nonsmooth convex optimization~\cite{nesterov2015universal}. More recently,  line-search-free methods have been proposed~\cite{malitsky2018golden}. In particular, Li and Lan further designed a line-search-free method that optimally adapts to H\"older smoothness in the convex setting~\cite{li2025simple}. Related parameter-free methods have also been developed for a variety of other settings~\cite{zhou2026adabb, suh2025adaptive, lan2024auto, lan2024projected,deng2026uniformly,yagishita2025simple,ye2025simple,giang2026auto,ji2026stochastic,wu2026universal, li2024problem, borodich2025nesterov,guigues2026universal,rodomanov2024universal,ou2025linesearch,marumo2024parameter}.

These results are most closely related to the smoothness-adaptation component of our work. However, the above parameter-free methods primarily address the standard convex setting and are not designed to exploit an unknown growth modulus. There are a few exceptions: the works~\cite{malitsky2020adaptive, borodich2025nesterov, lan2024optimalparameterfreegradientminimization} develop parameter-free methods that can exploit the growth condition of the problem, but their analyses are mainly restricted to the smooth setting. Restart schemes provide another important approach to adapting to growth conditions~\cite{lin2015adaptive,fercoq2017restart,liu2017adaptive, roulet2020sharpness,renegar2022restart, sujanani2025efficient, ito2021nearly}. However, these schemes typically incur a nonconstant multiplicative overhead in the oracle complexity, which makes them suboptimal. Notably, recent work by Wu and Grimmer~\cite{wu2026restart} shows that this overhead can be reduced to a log-logarithmic factor and can be parallelized. To the best of our knowledge, even under quadratic growth, no prior work has developed a parameter-free method that optimally adapts to the growth condition in the nonsmooth or weakly smooth setting.

\paragraph{Algorithms exploiting growth conditions.}
Growth conditions have a long history in optimization and variational analysis. They hold for broad, generic classes of functions~\cite{burke1993weak, drusvyatskiy2016generic} and are closely connected to metric subregularity and the behavior of optimization algorithms~\cite{burke1993weak,drusvyatskiy2018error,bolte2017error}. Beyond the restart schemes discussed above, exploiting growth conditions together with additional problem structure often leads to much faster convergence guarantees. For example, in smooth convex optimization, quadratic growth and related error-bound conditions explain why first-order methods can converge linearly even when the objective is not strongly convex~\cite{karimi2016linear}. More recently, it has been shown that gradient descent with alternating constant and Polyak stepsizes enjoys nearly linear convergence for smooth functions satisfying quartic growth~\cite{davis2025gradient}. In nonsmooth optimization, sharp growth can lead to exponentially faster convergence in suitable structured settings~\cite{davis2018sharp,davis2024stochastic, charisopoulos2024superlinearly}. Similar acceleration phenomena have also been established under quadratic growth for several important structured nonsmooth problem classes~\cite{han2023survey, davis2025local, kong2025lipschitz,li2025subgradient, zhang2025linearly, lin2026optimal}. However, these results rely on additional structural assumptions beyond growth alone, and therefore their acceleration guarantees do not apply to the general black-box model considered in this paper.

\subsection{Notation}\label{sec:notation}
In this paper, we let $\dotp{\cdot,\cdot}$ denote the Euclidean inner product and $\|\cdot\|$ denote the corresponding Euclidean norm.  For $a,b \in \RR$, we write  $a \vee b := \max\{a,b\}$. For $a \in \RR$, $\ceil{a}$ denotes the smallest integer greater than or equal to $a$ and $\floor{a}$ denotes the largest integer less than or equal to $a$. For any convex function $f$, we denote its convex subdifferential at a point $x$ by $\partial f(x)$ and let $f'(x)$ denote an arbitrary subgradient in $\partial f(x)$. We use $\ell_f(x; y)$ to denote the linear model of $f$ constructed at point $y$ and evaluated at point $x$:
\[
	\ell_f(x; y) := f(y) + \langle f'(y), x - y \rangle .
\]
We let $f^* = \inf_{x \in X} f(x)$ denote the optimal value of $f$ over $X$. We also let $X^* = \arg\min_{x \in X} f(x)$ denote the set of minimizers of $f$ on $X$.

\section{Affine W-certificate}\label{sec:affine_certificate}
In this section and the next, we develop optimal parameter-free algorithms for minimizing nonsmooth convex functions that satisfy a quadratic growth condition. We begin by defining two quantities that are useful throughout this paper. We say that a function $g$ is an affine minorant of $f$ if $g$ is an affine function and $g(x) \le f(x)$ for all $x\in X$.
\begin{definition}
	For any affine minorant $h$ of $f$, any point $\bar x \in X$, and  any descent quantity $\Delta >0$, define
	\begin{equation}\label{eq:d-delta}
		d(h, \bar x, \Delta) :=
		\inf\left\{\|\bar x-x\|: x\in X,\ h(x) \le f(\bar x)-\Delta\right\},
	\end{equation}
	with the convention that the infimum of an empty set is $+\infty$.
	We also define the \emph{descent-slowness}
	$$s(h,\bar x,\Delta) := \frac{d(h, \bar x, \Delta)}{\Delta}.$$
\end{definition}
Geometrically, $d(h,\bar x, \Delta)$ denotes the away-radius needed for the affine function to achieve descent $\Delta$, and the descent-slowness $s(h,\bar x, \Delta)$ is the average distance required for one unit of descent. If the affine minorant never reaches $f(\bar x)-\Delta$ on $X$, this radius is $+\infty$.

We next establish a few basic properties. The first lemma shows a monotonicity property of the descent-slowness.

\begin{lemma}[Monotonicity of descent-slowness]\label{lem:monotonicity}
	For any affine minorant $h$ of $f$, descent gaps $\Delta_2 \ge \Delta_1 >0$, and $\bar x \in X$, we have
	\begin{equation}\label{eq:monotonicity}
		s(h, \bar x, \Delta_1) \le s(h, \bar x, \Delta_2).
	\end{equation}
\end{lemma}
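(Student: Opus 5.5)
The plan is a convexity (interpolation) argument along segments from $\bar x$. Write $d_i := d(h,\bar x,\Delta_i)$ for $i=1,2$. The claim is equivalent to $d_1/\Delta_1 \le d_2/\Delta_2$, that is, $d_1 \le (\Delta_1/\Delta_2)\,d_2$. If $d_2 = +\infty$ there is nothing to prove, so assume the set defining $d_2$ is nonempty. Then for any $\eta>0$ there is a point $x\in X$ with $h(x)\le f(\bar x)-\Delta_2$ and $\|x-\bar x\|\le d_2+\eta$.

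The key step is to shrink this point toward $\bar x$. Set $t:=\Delta_1/\Delta_2\in(0,1]$ and $x_t := \bar x + t(x-\bar x)$. Since $X$ is convex and $\bar x, x\in X$, we have $x_t\in X$. Because $h$ is affine,
\[
h(x_t) = (1-t)\,h(\bar x) + t\,h(x).
\]
Since $h$ is a minorant of $f$ and $\bar x\in X$, we have $h(\bar x)\le f(\bar x)$. Therefore
\[
h(x_t)\le (1-t)f(\bar x) + t\bigl(f(\bar x)-\Delta_2\bigr) = f(\bar x) - t\Delta_2 = f(\bar x)-\Delta_1 .
\]
So $x_t$ is feasible for the infimum defining $d_1$. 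This gives
\[
d_1 \le \|x_t-\bar x\| = t\|x-\bar x\| \le t\,(d_2+\eta).
\]

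Letting $\eta\downarrow 0$ yields $d_1\le (\Delta_1/\Delta_2)\,d_2$. Dividing by $\Delta_1>0$ gives \eqref{eq:monotonicity}.

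There is no real obstacle here. The only points that need care are the empty-set convention, which is handled by the $+\infty$ case, and the use of the minorant property exactly at $\bar x$. That use is legitimate because $\bar x\in X$. It is what ensures the scaled point achieves at least the proportional descent.
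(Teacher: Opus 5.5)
Your proof is correct and is essentially the paper's argument: you shrink a feasible point for $\Delta_2$ toward $\bar x$ by the factor $\Delta_1/\Delta_2$, using affineness of $h$ and $h(\bar x)\le f(\bar x)$. The only difference is cosmetic. The paper takes an exact projection onto the closed nonempty sublevel set, whereas you use an $\eta$-approximate minimizer and let $\eta\downarrow 0$, which sidesteps any appeal to attainment.
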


\begin{proof}
	By the minorant property, $h(\bar x)\le f(\bar x)$. Let
	\[
		d_i := d(h,\bar x,\Delta_i), \qquad i=1,2.
	\]
	If $d_2=+\infty$, then the claim is immediate. Suppose $d_2<+\infty$. Then the set $C_2:=\{x\in X:h(x)\le f(\bar x)-\Delta_2\}$ is nonempty and closed, so the Euclidean projection of $\bar x$ onto $C_2$ exists. Choose $\hat x\in C_2$ such that $\|\bar x-\hat x\|=d_2$. Define
	\[
		\theta := \frac{\Delta_1}{\Delta_2}\in (0,1],
		\qquad
		\tilde x := (1-\theta)\bar x+\theta \hat x.
	\]
	By convexity of $X$, we have $\tilde x\in X$. Since $h$ is affine and $h(\hat x)\le f(\bar x)-\Delta_2$,
	\[
		\begin{aligned}
			h(\tilde x)
			 & = (1-\theta)h(\bar x)+\theta h(\hat x)                       \\
			 & \le (1-\theta)f(\bar x)+\theta\bigl(f(\bar x)-\Delta_2\bigr)
			= f(\bar x)-\Delta_1.
		\end{aligned}
	\]
	Thus $\tilde x$ is feasible in the definition of $d_1$, and hence
	\[
		d_1 \le \|\bar x-\tilde x\|
		= \theta\|\bar x-\hat x\|
		= \frac{\Delta_1}{\Delta_2}d_2.
	\]
	Dividing by $\Delta_1$ gives $s(h,\bar x,\Delta_1)\le s(h,\bar x,\Delta_2)$.
\end{proof}

For a point $x\in X$ and a scalar $\mu>0$, we say that $x$ has pointwise quadratic growth with modulus $\mu$ if
\[
	f(x)-f^* \ge \frac{\mu}{2}\dist^2(x, X^*).
\]

The next is the key result in this section and shows how descent-slowness can provide an upper bound on the optimality gap.
\begin{proposition}[Gap bound from descent-slowness]\label{prop:gap-bound}
	Suppose that $h(\cdot)$ is an affine minorant of $f$. For any $\bar x \in X$ having pointwise quadratic growth with modulus $\mu^* > 0$ and  any $\Delta >0$, we have
	\begin{equation}\label{eq:gap-bound}
		f(\bar x)-f^* \le \Delta \vee \frac{2}{\mu^* s^2(h,\bar x,\Delta)}.
	\end{equation}
	Here we use the extended-real convention $1/0=+\infty$ and $1/(+\infty)=0$.
\end{proposition}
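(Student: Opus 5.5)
Write $G := f(\bar x)-f^*$. If $G\le\Delta$ there is nothing to prove, so I will assume $G>\Delta$ and aim to show $G\le 2/(\mu^* s^2(h,\bar x,\Delta))$. Equivalently, I want to show that $s(h,\bar x,\Delta)\le \sqrt{2/(\mu^* G)}$. The extended-real conventions handle the degenerate cases: if $s=0$ the bound is $+\infty$, and $s=+\infty$ will be ruled out below.

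The key step is to use a minimizer as a witness point for the descent at level $G$. Let $x^*$ be the Euclidean projection of $\bar x$ onto $X^*$. This projection exists because $X^*$ is nonempty, closed (since $f$ is closed), and convex. Since $h$ is a minorant, $h(x^*)\le f(x^*)=f^*=f(\bar x)-G$. Hence $x^*$ is feasible in the definition of $d(h,\bar x,G)$, and so
\[
d(h,\bar x,G)\le \|\bar x-x^*\|=\dist(\bar x,X^*)\le \sqrt{2G/\mu^*},
\]
where the last inequality is the pointwise quadratic growth at $\bar x$. Dividing by $G$ gives $s(h,\bar x,G)\le \sqrt{2/(\mu^* G)}$. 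In particular this quantity is finite.

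Next I apply Lemma~\ref{lem:monotonicity} with $\Delta_1=\Delta<\Delta_2=G$. This gives
\[
s(h,\bar x,\Delta)\le s(h,\bar x,G)\le \sqrt{2/(\mu^* G)}.
\]
Squaring and rearranging yields $G\le 2/(\mu^* s^2(h,\bar x,\Delta))$ whenever $s(h,\bar x,\Delta)>0$. When $s(h,\bar x,\Delta)=0$ the right-hand side is $+\infty$, so the bound holds trivially. Combining this with the case $G\le\Delta$ gives \eqref{eq:gap-bound}.

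I do not expect a real obstacle. The one idea needed is to evaluate the slowness at the level $\Delta_2=G$ itself, where a minimizer provides a feasible point, and then transfer the bound down to level $\Delta$ via the monotonicity lemma. The only care needed is with the extended-real conventions and the existence of the projection onto $X^*$.
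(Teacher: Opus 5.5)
Your proof is correct and follows essentially the same route as the paper: the nearest minimizer $x^*$ is a feasible witness at level $G=f(\bar x)-f^*$, and Lemma~\ref{lem:monotonicity} transfers the resulting slowness bound down to level $\Delta$. The only difference is cosmetic. You bound $s(h,\bar x,G)\le\sqrt{2/(\mu^* G)}$ directly via quadratic growth, whereas the paper first bounds $\|\bar x-x^*\|\le 2/(\mu^* s(h,\bar x,\Delta))$ and then substitutes back into $G\le\|\bar x-x^*\|/s(h,\bar x,\Delta)$.
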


\begin{proof}
	Let $\Delta^*:=f(\bar x)-f^*$. If $\Delta^*\le \Delta$, the claim is immediate. If $s(h,\bar x,\Delta)=0$, the second term in \eqref{eq:gap-bound} is $+\infty$, and the claim is also immediate. Hence assume $\Delta^*>\Delta$ and $s(h,\bar x,\Delta)>0$.

	Choose $x^*\in X^*$ such that $\|\bar x-x^*\|=\dist(\bar x,\cX^*)$. The pointwise quadratic growth assumption implies that
	\[
		\frac{\mu^*}{2}\|\bar x-x^*\|^2 \le \Delta^*.
	\]
	Because $h$ is a minorant of $f$, we have
	\[
		h(x^*)\le f(x^*)=f^*=f(\bar x)-\Delta^*.
	\]
	Thus $x^*$ is feasible in the definition of $d(h,\bar x,\Delta^*)$, and hence $
		d(h,\bar x,\Delta^*)\le \|\bar x-x^*\|.$
	Since $\Delta^*>\Delta$, Lemma~\ref{lem:monotonicity} gives
	\[
		s(h,\bar x,\Delta)\le s(h,\bar x,\Delta^*).
	\]
	Moreover, $d(h,\bar x,\Delta^*)<+\infty$, so $0<s(h,\bar x,\Delta)\le s(h,\bar x,\Delta^*)<+\infty$. Therefore
	\[
		\Delta^*
		= \frac{d(h,\bar x,\Delta^*)}{s(h,\bar x,\Delta^*)}
		\le \frac{\|\bar x-x^*\|}{s(h,\bar x,\Delta)}.
	\]
	Combining this inequality with quadratic growth yields
	\[
		\frac{\mu^*}{2}\|\bar x-x^*\|^2
		\le \Delta^*
		\le \frac{\|\bar x-x^*\|}{s(h,\bar x,\Delta)}.
	\]
	Since $\Delta^*>0$, we have $\|\bar x-x^*\|>0$, and hence $
		\|\bar x-x^*\|\le \frac{2}{\mu^*s(h,\bar x,\Delta)}.$
	Substituting this bound into the upper bound on $\Delta^*$ gives
	\[
		\Delta^*
		\le \frac{2}{\mu^*s^2(h,\bar x,\Delta)}.
	\]
	This proves \eqref{eq:gap-bound}.
\end{proof}
Proposition~\ref{prop:gap-bound} suggests a natural way to certify near-optimality. For a candidate gap $\Delta$, it suffices to find an affine minorant whose descent-slowness is large enough. If the exact quadratic-growth
modulus $\mu^*$ were known, the sufficient threshold would be
\[
	s(h,\bar x,\Delta)^2 \ge \frac{2}{\mu^*\Delta}.
\]
In the parameter-free setting, however, $\mu^*$ is not available. We therefore
introduce a family of certificates indexed by a trial modulus $\mu$.

\begin{definition}[W-certificate]\label{def:valid-compressed-w}
	We say that an affine function $h$ is a $(\mu,\Delta)$  W-certificate for $f$ at $\bar x \in X$ if $h$ minorizes $f$ and
	\begin{equation}\label{eq:valid-compressed-w}
		s(h,\bar x, \Delta)^2 \ge \frac{2}{\mu\Delta}.
	\end{equation}
\end{definition}
The next corollary converts a W-certificate into an explicit optimality-gap bound. The statement separates the algorithmic parameter $\mu$ used in the
certificate from the true quadratic-growth modulus $\mu^*$ available at
$\bar x$.
\begin{corollary}\label{cor:valid-gap}
	Suppose that $\bar x \in X$ has pointwise quadratic growth with modulus $\mu^* > 0$. For  any $\Delta >0$,  if there exists  a $(\mu,\Delta)$ W-certificate for $\bar x$, then
	\begin{equation}\label{eq:valid-gap}
		f(\bar x)-f^* \le \Delta \vee \frac{\mu\Delta}{\mu^*}.
	\end{equation}
	In particular, if $\mu\le \mu^*$, then $f(\bar x)-f^*\le \Delta$.
\end{corollary}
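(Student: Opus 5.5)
The plan is to apply Proposition~\ref{prop:gap-bound} directly to the certificate $h$ and then substitute the certificate inequality \eqref{eq:valid-compressed-w}. Since $h$ is a $(\mu,\Delta)$ W-certificate, it is an affine minorant of $f$. The point $\bar x$ has pointwise quadratic growth with modulus $\mu^*>0$, so Proposition~\ref{prop:gap-bound} yields
\[
f(\bar x)-f^* \le \Delta \vee \frac{2}{\mu^* s^2(h,\bar x,\Delta)},
\]
with the stated extended-real conventions.

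Next I bound the second term. Definition~\ref{def:valid-compressed-w} gives $s(h,\bar x,\Delta)^2 \ge 2/(\mu\Delta)$, which is a strictly positive quantity (implicitly $\mu>0$). Hence $s(h,\bar x,\Delta)>0$, so the case $1/0$ never arises. If $s=+\infty$, the second term equals $0$, and the bound $\Delta$ holds trivially. Otherwise I invert the inequality to get $1/s^2 \le \mu\Delta/2$, and therefore
\[
\frac{2}{\mu^* s^2(h,\bar x,\Delta)} \le \frac{2}{\mu^*}\cdot\frac{\mu\Delta}{2} = \frac{\mu\Delta}{\mu^*}.
\]
Since the maximum is monotone in each argument, this gives \eqref{eq:valid-gap}.

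For the final claim, if $\mu\le\mu^*$ then $\mu\Delta/\mu^*\le\Delta$, so the maximum equals $\Delta$. The only delicate point in the whole argument is the extended-real bookkeeping: ruling out $s=0$ and handling $s=+\infty$. The certificate inequality settles both, so I expect no real obstacle.
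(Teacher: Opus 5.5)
Your proposal is correct and matches the paper's intended argument: the paper gives no separate proof, treating the corollary as an immediate consequence of Proposition~\ref{prop:gap-bound} obtained by substituting $s(h,\bar x,\Delta)^2\ge 2/(\mu\Delta)$ into the second term of the maximum. Your extended-real bookkeeping is also sound: the certificate rules out $s=0$, and $s=+\infty$ leaves only the trivial bound $\Delta$.
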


Several comments are useful for interpreting this definition. The quantity
$\mu^*$ is a property of the objective and appears only in the analysis; it is
not required by the certificate construction. In this section, $\mu^*$ should be
understood pointwise at the evaluation center $\bar x$. Later, when we analyze
the algorithm, the quadratic-growth assumption will ensure that the same
modulus applies to every evaluation center generated by the method.

By contrast, $\mu$ is an algorithmic trial parameter. It sets the scale at which
the affine minorant is tested. A larger value of $\mu$ makes the condition
\eqref{eq:valid-compressed-w} easier to satisfy, but the resulting bound
\eqref{eq:valid-gap} becomes weaker by the factor $\mu/\mu^*$.  Thus
$\mu$ should be viewed not as an assumed problem parameter, but as the current estimate of the
modulus scale used by the algorithm. Moreover, progress toward optimality is governed not by either parameter individually, but by the product $\mu \Delta$. This observation serves as a guiding principle for our algorithmic design.

For our algorithms, it is useful to return a Boolean certificate flag  $\iscert{}$. We say that the pair $(\bar x,\mu,\Delta)$ is \emph{certified} if the gap bound~\eqref{eq:valid-gap} has been verified; equivalently, $\iscert{}=\texttt{True}$ means that the current call has verified this bound. A W-certificate is one way to verify it. Another useful case is when an affine minorant $h$ satisfies
\[
	h(x)\ge f(\bar x)-\Delta
	\qquad \text{for all } x\in X,
\]
because then $f^*\ge f(\bar x)-\Delta$ and hence $f(\bar x)-f^*\le \Delta$, which is stronger than~\eqref{eq:valid-gap}.

\section{Parameter-free method for nonsmooth convex functions with quadratic growth}\label{sec:bundle level}
Equipped with the affine W-certificate, we develop a parameter-free method for nonsmooth problems satisfying quadratic growth.  Section~\ref{subsec:blw_sub} presents  the bundle-level certify-or-improve (BLCI) algorithm, which serves as the core building block of the bundle-level W-certificate (BLW) method described in Section~\ref{subsec:outer_blw_nonsmooth}. We formally state the regularity conditions that will be imposed throughout this section.

The first assumption is the model-error form of the usual nonsmooth Lipschitz condition. It is implied by the usual Lipschitz condition on the objective value.
\begin{assumption}[Lipschitz continuity]\label{ass:model-error}
	There exists $M>0$ such that
	\begin{equation}\label{eq:model-error}
		0\le f(x)-\ell_f(x;y)\le M\|x-y\|,\qquad x,y\in X.
	\end{equation}
\end{assumption}
The second assumption is the quadratic growth condition.  Since it is assumed only on the initial sublevel set, the feasible set $X$ need not be bounded.
\begin{assumption}[Quadratic growth]\label{ass:qg}
	There exists $\mu^* > 0$ such that
	\begin{equation}\label{eq:qg}
		f(x) - f^* \ge \frac{\mu^*}{2} \dist^2(x, \cX^*) \quad  \text{for all $x \in  \{z \in X \colon f(z) \le f(\bar x_0)\}$},
	\end{equation}
	where $\bar x_0$ is the initial point.
\end{assumption}

\subsection{Bundle-level certify-or-improve subroutine}\label{subsec:blw_sub}
The BLCI method is shown in Algorithm~\ref{alg:blw}. It can be viewed as a variant of the classical bundle-level method~\cite{lemarechal1995new,lan2020first}. For a fixed evaluation center $\bar x$ and candidate parameters $(\mu,\Delta)$, BLCI starts from an affine minorant $a^{(0)}$ of $f$ and either certifies that the pair $(\bar x,\mu,\Delta)$ satisfies~\eqref{eq:valid-gap} or returns a point with substantial objective decrease.  The memory length $m$ is user-specified but should not be viewed as a tuning parameter. Indeed, all results established in this work hold for every $m \ge 1$, and the complexity bounds are independent of $m$. We therefore do not prescribe a particular value of $m$ in the statements of our results. The choice of $m$ affects only empirical performance, as demonstrated in Section~\ref{sec:numerics}.

\begin{algorithm}[H]
	\small
	\caption{The Bundle-Level Certify-or-Improve Method $\mathrm{BLCI}(\bar x,\mu,\Delta,m,a^{(0)})$}\label{alg:blw}
	\begin{algorithmic}[1]
		\Require Evaluation center $\bar x$, QG modulus estimate $\mu$, gap estimate $\Delta$, memory length $m$, and affine minorant $a^{(0)}$.
		\Ensure An affine minorant $a^+$, a point $x^+$ that is either $\bar x$ or a potentially improved solution, and a certificate flag $\iscert{}$.
		\State\label{line:blw-initial-cert} Set $\ell=f(\bar x)-\Delta$. If $\{x\in X:a^{(0)}(x)\le \ell\}=\emptyset$, return $(a^{(0)},\bar x,\texttt{True})$.
		\State\label{line:blw-initial-project}\begin{minipage}[t]{0.88\linewidth}
			Set
			\begin{equation}\label{eq:blw-x0}
				x_0\in \argmin_{x\in X}\left\{\|\bar x-x\|^2:a^{(0)}(x)\le \ell\right\}.
			\end{equation}
		\end{minipage}
		\State\label{line:blw-initial-query} Query the oracle at $x_0$ and form the cut $\ell_f(\cdot;x_0)$.
		\For{$t=1,2,3,\ldots$}\label{line:blw-for}
		\State\label{line:blw-maxmodel}\begin{minipage}[t]{0.84\linewidth}
			Form the max-model
			\[
				\psi_t(x):=a^{(t-1)}(x)\vee \max_{[t-m]_+\le i\le t-1}\ell_f(x;x_i).
			\]
		\end{minipage}
		\State\label{line:blw-loop-cert}\begin{minipage}[t]{0.84\linewidth}
			Set $c_t:=\inf_{x\in X}\psi_t(x)$. If $c_t\ge \ell$, set $a_{c_t}(\cdot)\equiv c_t$ and return $(a_{c_t},\bar x,\texttt{True})$.
		\end{minipage}
		\State\label{line:blw-project}\begin{minipage}[t]{0.84\linewidth}
			Solve
			\begin{equation}\label{eq:blw-projection-subproblem}
				\begin{aligned}
					x_t\in \argmin_{x\in X}\quad & \|\bar x-x\|^2                                      \\
					\mathrm{s.t.}\quad           & a^{(t-1)}(x)\le \ell,                               \\
					                             & \ell_f(x;x_i)\le \ell,  \quad [t-m]_+ \le i\le t-1.
				\end{aligned}
			\end{equation}
		\end{minipage}
		\State\label{line:blw-aggregate}\begin{minipage}[t]{0.84\linewidth}
			Extract optimal dual multipliers from~\eqref{eq:blw-projection-subproblem}, and normalize them to obtain $\lambda_a\in\R_+$ and $\lambda_i\in\R_+$ with $\lambda_a+\sum_i\lambda_i=1$. Set
			\[
				a^{(t)}(\cdot):=\lambda_a a^{(t-1)}(\cdot)+\sum_{i= [t-m]_+ }^{t-1}\lambda_i\ell_f(\cdot;x_i).
			\]
		\end{minipage}
		\State\label{line:blw-valid} If $\|\bar x-x_t\|^2/\Delta^2\ge 2/(\mu\Delta)$, return $(a^{(t)},\bar x,\texttt{True})$.
		\State\label{line:blw-improve} Query the oracle at $x_t$.  If $f(x_t)<f(\bar x)-4\Delta/5$, return $(a^{(t)},x_t,\texttt{False})$.
		\EndFor
	\end{algorithmic}
\end{algorithm}

In Line~\ref{line:blw-loop-cert}, since $c_t=\inf_{x\in X}\psi_t(x)$ and $\psi_t\le f$, we have $c_t\le f(x)$ for all $x\in X$, and therefore $a_{c_t}$ is an affine minorant. When $c_t\ge \ell=f(\bar x)-\Delta$, we also have $f^*\ge c_t\ge \ell$, so the gap bound~\eqref{eq:valid-gap} holds. Thus, the return with $\iscert{}=\texttt{True}$ is justified directly, and no descent-slowness value is needed in this branch.

The multiplier step in Line~\ref{line:blw-aggregate} is justified by the test in Line~\ref{line:blw-loop-cert}. The proof below verifies the relative Slater condition, the existence of optimal multipliers, and the positivity needed to normalize them.

The next lemma records the projection geometry behind the affine aggregation step.

\begin{lemma}[Projection three-point inequality]\label{lem:projection-three-point}
	Consider iterates $\{x_t\}$ and aggregate models $a^{(t)}$ generated by Algorithm~\ref{alg:blw}. For every aggregate model generated before termination,
	\begin{equation}\label{eq:three-point}
		\|\bar x-x_t\|^2 + \|x_t-\hat x\|^2
		\le \|\bar x-\hat x\|^2,
		\qquad \forall \hat x\in\{x\in X:a^{(t)}(x)\le \ell\}.
	\end{equation}
	Consequently, whenever $x_{t+1}$ is generated,
	\begin{equation}\label{eq:three-point-step}
		\|\bar x-x_t\|^2 + \|x_t-x_{t+1}\|^2
		\le \|\bar x-x_{t+1}\|^2.
	\end{equation}
\end{lemma}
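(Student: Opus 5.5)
The plan is to read the aggregation step as the KKT system of the projection subproblem~\eqref{eq:blw-projection-subproblem}. Then $x_t$ is exactly the Euclidean projection of $\bar x$ onto the half-space-type set $H_t:=\{x\in X: a^{(t)}(x)\le \ell\}$, and \eqref{eq:three-point} is the standard obtuse-angle (three-point) inequality for projections onto a closed convex set.

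\textbf{Multipliers exist and can be normalized.} This step is reached only if Line~\ref{line:blw-loop-cert} did not return, so $c_t=\inf_{x\in X}\psi_t(x)<\ell$. Hence there is $\tilde x\in X$ with $\psi_t(\tilde x)<\ell$, which means every affine constraint of \eqref{eq:blw-projection-subproblem} holds strictly at $\tilde x$. This is a Slater point relative to $X$, so the problem is feasible. Its objective is strongly convex, so the minimizer $x_t$ exists and is unique. Standard Lagrangian duality for a convex program with affine inequality constraints over the closed convex set $X$ then gives multipliers $\tilde\lambda_a,\tilde\lambda_i\ge 0$, with complementary slackness, such that $x_t$ minimizes over $X$
\[
\|\bar x-x\|^2+\tilde\lambda_a\bigl(a^{(t-1)}(x)-\ell\bigr)+\sum_i\tilde\lambda_i\bigl(\ell_f(x;x_i)-\ell\bigr).
\]
The multipliers sum to a positive number. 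Indeed, if all were zero, $x_t$ would equal the unconstrained projection of $\bar x$ onto $X$, which is $\bar x$ itself. But $\bar x$ is infeasible, since $\psi_t(\bar x)\le f(\bar x)$ is not below $\ell$ strictly in general; more carefully, $a^{(t-1)}(\bar x)$ could be $\le \ell$ only if... At this point I would argue differently: if all multipliers vanish, then $x_t=\bar x$, and then \eqref{eq:three-point} is trivial anyway. So the normalization issue only matters when the sum $S$ is positive. (In the paper this case presumably cannot occur, because $\ell_f(\bar x;x_0)$ together with the constraints places $\bar x$ outside; I would check this via $x_0\neq\bar x$, or else note $\|\bar x-x_t\|=0$ makes the claim trivial.) With $S>0$, set $\lambda=\tilde\lambda/S$. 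Since the constraints are affine, the penalty term equals $S\,(a^{(t)}(x)-\ell)$.

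\textbf{Projection characterization.} By complementary slackness, $a^{(t)}(x_t)=\ell$. Also $x_t$ minimizes $\|\bar x-x\|^2+S(a^{(t)}(x)-\ell)$ over $X$. Take any $\hat x\in H_t$ and compare. Using the optimality of $x_t$ together with strong convexity (the Lagrangian is $1$-strongly convex in the $\|\cdot\|^2$ sense), we get
\[
\|\bar x-x_t\|^2+\|x_t-\hat x\|^2\le \|\bar x-\hat x\|^2+S\bigl(a^{(t)}(\hat x)-a^{(t)}(x_t)\bigr)\le\|\bar x-\hat x\|^2,
\]
because $a^{(t)}(\hat x)\le \ell=a^{(t)}(x_t)$. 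The strong convexity step uses the first-order optimality condition $\langle 2(x_t-\bar x)+S\nabla a^{(t)},\,\hat x-x_t\rangle\ge0$ together with the expansion of $\|\bar x-\hat x\|^2$. This proves \eqref{eq:three-point}.

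\textbf{The consequence.} If $x_{t+1}$ is generated, it is feasible for the next subproblem, so $a^{(t)}(x_{t+1})\le \ell$ and $x_{t+1}\in H_t$. Substituting $\hat x=x_{t+1}$ into \eqref{eq:three-point} gives \eqref{eq:three-point-step}.

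\textbf{Main obstacle.} The delicate step is the existence of multipliers over a general closed convex $X$ (rather than $\R^n$), together with positivity for normalization. For existence, I would invoke the relative Slater condition: affine constraints with a point of $X$ satisfying them strictly suffice. For positivity, I would handle the degenerate case $x_t=\bar x$ separately, as above.
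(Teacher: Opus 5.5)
Your derivation of \eqref{eq:three-point} in the case $S>0$ is correct and is essentially the paper's argument. You get Slater from $c_t<\ell$, then multipliers, then complementary slackness giving $a^{(t)}(x_t)=\ell$, then the obtuse-angle inequality. Your strong-convexity comparison of the Lagrangian is just the variational inequality for the projection onto $C_t:=\{x\in X:a^{(t)}(x)\le\ell\}$ written differently.

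What you do not supply is the positivity of $S$. The paper's proof is explicitly charged with this, because positivity is what makes the normalization in Line~\ref{line:blw-aggregate} executable. It therefore guarantees that $a^{(t)}$, and with it the next subproblem, exists at every iteration. Your fallback (``if $S=0$ then $x_t=\bar x$ and the inequality is trivial'') rescues the displayed inequality but not this: if $S=0$ there is no aggregate, and the method cannot proceed. Your aborted attempt via $\psi_t(\bar x)\le f(\bar x)$ yields nothing. The suggested check ``via $x_0\neq\bar x$'' points the wrong way, since $x_0=\bar x$ can occur.

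The missing idea is a short induction showing that $\bar x$ violates some constraint of every projection subproblem. Then $x_t\neq\bar x$, so every optimal multiplier vector has $S>0$: a zero multiplier vector would make $x_t$ the projection of $\bar x$ onto $X$, i.e.\ $\bar x$ itself.
\begin{itemize}
\item At $t=1$: either $a^{(0)}(\bar x)>\ell$, or $\bar x$ is feasible in \eqref{eq:blw-x0}. In the latter case $x_0=\bar x$, and the cut gives $\ell_f(\bar x;x_0)=f(\bar x)>\ell$. This is exactly where $x_0=\bar x$ helps rather than hurts.
\item Inductive step: once $S>0$ and your inequality holds at step $t$, take $\hat x=\bar x$. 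If $a^{(t)}(\bar x)\le\ell$, this gives $2\|\bar x-x_t\|^2\le 0$, i.e.\ $x_t=\bar x$, a contradiction. Hence $a^{(t)}(\bar x)>\ell$. Since $a^{(t)}(x)\le\ell$ is a constraint of subproblem $t+1$, $\bar x$ is infeasible there, and $x_{t+1}\neq\bar x$.
\end{itemize}
With this induction added, the degenerate branch disappears and your proof coincides with the paper's.
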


\begin{proof}
	Let $\mathcal H_t$ consist of $a^{(t-1)}$ and the cuts
	$\ell_f(\cdot;x_i)$ appearing in~\eqref{eq:blw-projection-subproblem}.
	Since $c_t<\ell$, there exists $z\in X$ such that
	$h(z)<\ell$ for every $h\in\mathcal H_t$. Thus, the Slater condition holds, and the optimal
	multipliers $\eta_h\ge0$ exist and satisfy
	\begin{equation}\label{eq:blw-kkt}
		0\in 2(x_t-\bar x)+N_X(x_t)
		+\sum_{h\in\mathcal H_t}\eta_h\nabla h,
		\qquad
		\eta_h\bigl(h(x_t)-\ell\bigr)=0.
	\end{equation}
	Here $N_X(x_t)$ denotes the normal cone to $X$ at $x_t$.

	We next show inductively that $\bar x$ violates at least one constraint in
	each projection subproblem. At $t=1$, this is immediate if
	$a^{(0)}(\bar x)>\ell$. Otherwise, $\bar x$ is feasible in the definition
	of $x_0$, so $x_0=\bar x$ and
	$\ell_f(\bar x;x_0)=f(\bar x)>\ell$. Hence
	$\psi_1(\bar x)>\ell$. For later iterations, the induction will follow from
	$a^{(t-1)}(\bar x)>\ell$, which we establish below. In particular,
	$x_t\ne\bar x$.

	Set $\Lambda:=\sum_{h\in\mathcal H_t}\eta_h$. If $\Lambda=0$, then
	\eqref{eq:blw-kkt} is the optimality condition for projecting $\bar x$ onto
	$X$. Since $\bar x\in X$, it would imply $x_t=\bar x$, a contradiction.
	Thus $\Lambda>0$. Normalize the multipliers by
	$\lambda_h:=\eta_h/\Lambda$ and let $a^{(t)}$ be their convex combination,
	as in Line~\ref{line:blw-aggregate}. Complementarity gives
	$a^{(t)}(x_t)=\ell$, while~\eqref{eq:blw-kkt} becomes
	\[
		0\in2(x_t-\bar x)+N_X(x_t)+\Lambda\nabla a^{(t)}.
	\]
	These are precisely the KKT conditions for projecting $\bar x$ onto the
	closed convex set
	\[
		C_t := \{x\in X:a^{(t)}(x)\le \ell\}.
	\]
	Consequently, $x_t$ is that projection. Moreover,
	$a^{(t)}(\bar x)>\ell$: otherwise $\bar x\in C_t$, contradicting
	$x_t\ne\bar x$. This completes the induction used above and also justifies
	the normalized aggregate in every generated iteration.

	Therefore
	\[
		\left\langle x_t-\bar x,\hat x-x_t\right\rangle\ge 0, \qquad \forall \hat x\in C_t.
	\]
	Expanding the square gives
	\[
		\begin{aligned}
			\|\bar x-\hat x\|^2
			 & = \|\bar x-x_t\|^2 + \|x_t-\hat x\|^2
			+2\left\langle x_t-\bar x,\hat x-x_t\right\rangle \\
			 & \ge \|\bar x-x_t\|^2 + \|x_t-\hat x\|^2,
		\end{aligned}
	\]
	which proves \eqref{eq:three-point}. Since $x_{t+1}$ is feasible for $a^{(t)}(x)\le \ell$ whenever it is generated, \eqref{eq:three-point-step} follows.
\end{proof}
\begin{remark}
	Using dual variables in Line~\ref{line:blw-aggregate} to reduce storage requirements is not a new idea; related techniques have appeared in~\cite{cox2014dual}. From an implementation standpoint, Line~\ref{line:blw-aggregate}
	requires the projection subproblem to be solved together with one set of
	optimal KKT multipliers. This is a computational assumption on the
	representation of \(X\) and on the subproblem solver, rather than a
	theoretical requirement that \(X\) belong to a particular class of
	simple sets. The multiplier computation is especially tractable when
	projection onto \(X\) is efficient, as is the case for
	\(\mathbb{R}^d\), boxes, simplices, and Euclidean balls.

	In particular, when \(X=\mathbb{R}^d\), the iterates generated by Algorithm~\ref{alg:blw} satisfy the standard linear-span condition for first-order methods. Moreover, when \(m=1\), each projection subproblem reduces to projecting $\bar x$
	onto the intersection of the aggregate halfspace and a single oracle-cut halfspace, and the associated optimal multiplier vector admits a closed-form expression.
\end{remark}
Putting the iterations together, the next proposition gives a rigorous guarantee for BLCI calls, including iteration complexity and sufficient conditions for certification. Notice that the descent-slowness progress bound also depends on the initial affine minorant. Hence, a better initial affine minorant can warm-start the algorithm and save oracle evaluations, which will be important for obtaining the desired complexity bound.

\begin{proposition}\label{prop:blw-basic}
	Suppose that Assumptions~\ref{ass:model-error} and~\ref{ass:qg} hold, and suppose that the evaluation center $\bar x$ lies in the initial level set from Assumption~\ref{ass:qg}. Consider one call to Algorithm~\ref{alg:blw} with parameters  $\bar x$, $\mu$, $\Delta$, and initial affine minorant $a^{(0)}$. Let $N_{\mathrm{ter}}$ denote the last iteration. Then the following properties hold for the iterates generated by this BLCI call.
	\begin{enumerate}[label=\arabic*.,ref=\arabic*,leftmargin=2em]
		\item\label{item:blw-slowness-progress} For every queried point $x_t$ with $t<N_{\mathrm{ter}}$,
		      \begin{equation}\label{eq:slowness-progress}
			      s(a^{(t)},\bar x,\Delta)^2
			      \ge s(a^{(0)},\bar x,\Delta)^2 + \frac{t}{25M^2}.
		      \end{equation}
		      Consequently, if Algorithm~\ref{alg:blw} terminates at Line~\ref{line:blw-improve} with a significantly improved solution, the associated affine minorant satisfies
		      \begin{equation}\label{eq:slowness-progress-final}
			      s(a^{(N_{\mathrm{ter}})},\bar x,\Delta)^2
			      \ge s(a^{(0)},\bar x,\Delta)^2 + \frac{N_{\mathrm{ter}}-1}{25M^2}.
		      \end{equation}

		\item\label{item:blw-termination} The BLCI algorithm terminates within $50M^2/(\mu\Delta)+2$ oracle evaluations.

		\item\label{item:blw-cert-flag-gap} When the BLCI algorithm returns with $\iscert{}=\texttt{True}$ for the pair $(\bar x,\mu,\Delta)$, we have
		      \begin{align*}
			      f(\bar x)-f^* \le \Delta \vee \frac{\mu\Delta}{\mu^*}.
		      \end{align*}

		\item\label{item:blw-small-gap-cert} If $f(\bar x)-f^*\le 4\Delta/5$, the BLCI algorithm returns with $\iscert{}=\texttt{True}$, and hence certifies the pair $(\bar x,\mu,\Delta)$.
	\end{enumerate}
\end{proposition}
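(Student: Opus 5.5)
The plan is to prove the four items in order. Items~\ref{item:blw-slowness-progress} and~\ref{item:blw-termination} come from a telescoping argument on the distances $\|\bar x-x_t\|$, while items~\ref{item:blw-cert-flag-gap} and~\ref{item:blw-small-gap-cert} are short case checks.

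\textbf{Step 1: identify the descent-slowness.} The proof of Lemma~\ref{lem:projection-three-point} shows that $x_t$ is the Euclidean projection of $\bar x$ onto $C_t=\{x\in X:a^{(t)}(x)\le \ell\}$, where $\ell=f(\bar x)-\Delta$. Hence $d(a^{(t)},\bar x,\Delta)=\|\bar x-x_t\|$. Likewise, by~\eqref{eq:blw-x0}, $d(a^{(0)},\bar x,\Delta)=\|\bar x-x_0\|$. So $s(a^{(t)},\bar x,\Delta)^2=\|\bar x-x_t\|^2/\Delta^2$ for every $t\ge 0$, and item~\ref{item:blw-slowness-progress} reduces to showing
\[
\|\bar x-x_t\|^2\ge\|\bar x-x_0\|^2+t\Delta^2/(25M^2).
\]

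\textbf{Step 2: per-step progress.} The step from $x_0$ to $x_1$ needs its own three-point inequality. Since $x_1$ is feasible for $a^{(0)}(x)\le\ell$ and $x_0$ is the projection onto that set, the same expansion as in Lemma~\ref{lem:projection-three-point} gives $\|\bar x-x_1\|^2\ge\|\bar x-x_0\|^2+\|x_0-x_1\|^2$. For $t\ge1$ we use~\eqref{eq:three-point-step}. It then suffices to show $\|x_t-x_{t+1}\|\ge\Delta/(5M)$ whenever $x_{t+1}$ is generated.

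Because $x_{t+1}$ was generated, $x_t$ did not trigger the improvement exit, so $f(x_t)\ge f(\bar x)-4\Delta/5=\ell+\Delta/5$. Since $m\ge1$, the cut $\ell_f(\cdot;x_t)$ is among the constraints defining $x_{t+1}$, so $\ell_f(x_{t+1};x_t)\le\ell$. Subtracting gives
\[
\langle f'(x_t),x_t-x_{t+1}\rangle\ge \Delta/5 .
\]
I then want to bound the left side by $M\|x_t-x_{t+1}\|$ using Assumption~\ref{ass:model-error}.

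\textbf{Main obstacle.} This last bound is the step I expect to be delicate. As written, \eqref{eq:model-error} controls $f(x)-\ell_f(x;y)$ rather than $\|f'(y)\|$ directly. My first attempt is to combine the two instances $(x,y)=(x_{t+1},x_t)$ and $(x,y)=(x_t,x_{t+1})$ with convexity. If that does not close, I would use the interpretation the paper adopts for nonsmooth Lipschitz objectives, namely subgradient norms bounded by a constant comparable to $M$. That version yields the factor $5M$ directly, possibly after adjusting constants.

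Summing the per-step bounds then gives~\eqref{eq:slowness-progress} for every $t<N_{\mathrm{ter}}$. For~\eqref{eq:slowness-progress-final}: if the call exits at Line~\ref{line:blw-improve} at iteration $N_{\mathrm{ter}}$, then $x_{N_{\mathrm{ter}}}$ was generated from the non-improving point $x_{N_{\mathrm{ter}}-1}$. The accumulated progress therefore covers $N_{\mathrm{ter}}-1$ steps.

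\textbf{Step 3: termination (item~\ref{item:blw-termination}).} At any iteration $t$ that passes Line~\ref{line:blw-valid} without returning, we have $s(a^{(t)},\bar x,\Delta)^2<2/(\mu\Delta)$. Combining this with Step 2 gives $t/(25M^2)<2/(\mu\Delta)$, i.e.\ $t<50M^2/(\mu\Delta)$. The oracle calls are at $x_0,x_1,\dots,x_{N_{\mathrm{ter}}}$, which gives the bound $50M^2/(\mu\Delta)+2$.

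\textbf{Step 4: the certified gap (item~\ref{item:blw-cert-flag-gap}).} There are three \texttt{True} exits.
\begin{enumerate}
\item At Line~\ref{line:blw-initial-cert}, $a^{(0)}>\ell$ on all of $X$, so $f^*\ge\ell$.
\item At Line~\ref{line:blw-loop-cert}, $f^*\ge c_t\ge\ell$.
\end{enumerate}
In both of these cases $f(\bar x)-f^*\le\Delta$. The third exit is at Line~\ref{line:blw-valid}. There Step 1 shows that $a^{(t)}$ is a $(\mu,\Delta)$ W-certificate; it is an affine minorant because it is a convex combination of minorants. Since $\bar x$ lies in the initial level set, Assumption~\ref{ass:qg} gives pointwise quadratic growth with modulus $\mu^*$, and Corollary~\ref{cor:valid-gap} finishes this case.

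\textbf{Step 5: small gaps are certified (item~\ref{item:blw-small-gap-cert}).} If $f(\bar x)-f^*\le4\Delta/5$, then every query satisfies $f(x_t)\ge f^*\ge f(\bar x)-4\Delta/5$. So the improvement exit can never fire. Item~\ref{item:blw-termination} guarantees termination, so the call must end through one of the \texttt{True} exits.
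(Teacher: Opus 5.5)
Your Steps 1, 4 and 5 agree with the paper, and your separate three-point inequality for $x_0\to x_1$ makes explicit something the paper uses tacitly. Step 2, however, has a genuine gap.

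You pair the non-improvement of the \emph{older} point $x_t$ with the cut $\ell_f(\cdot;x_t)$. That only gives $\langle f'(x_t),x_t-x_{t+1}\rangle\ge\Delta/5$. Assumption~\ref{ass:model-error} does not bound $\|f'(x_t)\|$: every linear $f(x)=\langle c,x\rangle$ satisfies \eqref{eq:model-error} for every $M>0$, whatever $\|c\|$ is. So this inequality cannot be turned into a step-length bound.

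Your first attempt does not close either. The instance $(x,y)=(x_{t+1},x_t)$ of \eqref{eq:model-error}, combined with $\ell_f(x_{t+1};x_t)\le\ell$, only yields $f(x_{t+1})-\ell\le M\|x_{t+1}-x_t\|$. To use it you need a lower bound on $f(x_{t+1})$, not on $f(x_t)$. Your fallback (bounded subgradients) proves the proposition only under a strictly stronger hypothesis than the one stated.

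The premise also fails at $t=0$, because $x_0$ is never submitted to the test at Line~\ref{line:blw-improve}. In fact your per-step claim is false. Take $X=[0,1]$, $f(x)=cx$, $\bar x=1$, $a^{(0)}=\ell_f(\cdot;\bar x)$ and $0<\Delta<c$. The algorithm produces $x_1=x_0=1-\Delta/c$, so $x_1$ is generated while $\|x_0-x_1\|=0$.

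The missing idea is to use the non-improvement of the \emph{newer} point, as the paper does. For $1\le j<N_{\mathrm{ter}}$, the point $x_j$ passed the test at Line~\ref{line:blw-improve}, so $f(x_j)\ge\ell+\Delta/5$. The constraint from the previous cut gives $\ell_f(x_j;x_{j-1})\le\ell$. Hence $\Delta/5\le f(x_j)-\ell_f(x_j;x_{j-1})\le M\|x_j-x_{j-1}\|$ follows directly from \eqref{eq:model-error}, with no subgradient bound needed.

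Thus only steps \emph{into} non-improving points count. This is exactly why \eqref{eq:slowness-progress} is stated for $t<N_{\mathrm{ter}}$, and why \eqref{eq:slowness-progress-final} loses the final step.

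Step 3 must be adjusted accordingly. At iteration $t$ the test at $x_t$ has not yet run, so only $t-1$ steps are certified, which gives $t-1<50M^2/(\mu\Delta)$. This still yields the oracle count $50M^2/(\mu\Delta)+2$, and with that change the rest of your argument goes through.
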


\begin{proof}
	We first record two elementary facts that will be used repeatedly. Since $a^{(0)}$ is an affine minorant of $f$ and every new aggregate model is a convex combination of previous aggregate models and oracle linearizations, each $a^{(t)}$ remains an affine minorant of $f$. Moreover, by the aggregate construction used in Algorithm~\ref{alg:blw} and Lemma~\ref{lem:projection-three-point}, $x_t$ is the Euclidean projection of $\bar x$ onto $\{x\in X:a^{(t)}(x)\le \ell\}$. Hence, whenever this set is nonempty and $x_t$ exists,
	\[
		d(a^{(t)},\bar x,\Delta)=\|\bar x-x_t\|,
		\qquad
		s(a^{(t)},\bar x,\Delta)=\frac{\|\bar x-x_t\|}{\Delta}.
	\]
	We now prove the one-step progress estimate. Fix an iteration $j$ before termination. Since the algorithm has not terminated at Line~\ref{line:blw-improve}  of Algorithm~\ref{alg:blw} in  this iteration,
	\[
		f(x_j)\ge \ell+\frac{\Delta}{5}.
	\]
	On the other hand, the projection subproblem defining $x_j$ includes the cut constraint generated at $x_{j-1}$, and hence
	\[
		\ell_f(x_j;x_{j-1})\le \ell.
	\]
	Using the model-error bound \eqref{eq:model-error}, we obtain
	\[
		\frac{\Delta}{5}
		\le f(x_j)-\ell
		\le f(x_j)-\ell_f(x_j;x_{j-1})
		\le M\|x_{j-1}-x_j\|.
	\]
	Thus
	\begin{equation}\label{eq:step-lower-bound}
		\|x_{j-1}-x_j\|^2 \ge \frac{\Delta^2}{25M^2}.
	\end{equation}
	Combining \eqref{eq:step-lower-bound} with the three-point inequality \eqref{eq:three-point-step} gives
	\[
		\|\bar x-x_j\|^2
		\ge \|\bar x-x_{j-1}\|^2 + \frac{\Delta^2}{25M^2}.
	\]
	Summing this inequality from $j=1$ to $t$ yields
	\[
		\|\bar x-x_t\|^2
		\ge \|\bar x-x_0\|^2 + \frac{t\Delta^2}{25M^2}.
	\]
	Dividing by $\Delta^2$ and using the descent-slowness identity $s(a^{(t)},\bar x,\Delta)=\|\bar x-x_t\|/\Delta$ proves \eqref{eq:slowness-progress}. If the algorithm terminates at Line~\ref{line:blw-improve} of Algorithm~\ref{alg:blw}, then the final point itself triggers the improvement test and therefore need not satisfy the preceding non-improvement inequality. Applying \eqref{eq:slowness-progress} to the previously queried points gives \eqref{eq:slowness-progress-final}.

	We next prove the termination bound. If the algorithm reaches Line~\ref{line:blw-valid} at iteration $t$ and has not returned by the slowness test, then
	\[
		s(a^{(t)},\bar x,\Delta)^2 < \frac{2}{\mu\Delta}.
	\]
	At this point, the progress through $x_{t-1}$ has already been established,
	whereas the improvement test at $x_t$ has not yet been performed. Thus
	\eqref{eq:slowness-progress}, the monotonicity of the projection distances,
	and the nonnegativity of the initial descent-slowness square give
	\[
		\frac{t-1}{25M^2}
		\le s(a^{(t-1)},\bar x,\Delta)^2
		\le s(a^{(t)},\bar x,\Delta)^2
		<\frac{2}{\mu\Delta}.
	\]
	Therefore $t-1<50M^2/(\mu\Delta)$, which yields the stated bound of
	$50M^2/(\mu\Delta)+2$ oracle evaluations, including the oracle evaluation
	at $x_0$.

	We now prove the claimed gap bound whenever the algorithm returns with $\iscert{}=\texttt{True}$. If the algorithm returns at Line~\ref{line:blw-initial-cert}, then no point in $X$ satisfies $a^{(0)}(x)\le \ell$. Since $a^{(0)}$ is an affine minorant of $f$, this implies $f^*\ge \ell=f(\bar x)-\Delta$, and hence $f(\bar x)-f^*\le \Delta$. If the algorithm returns at Line~\ref{line:blw-loop-cert}, then, as discussed above, $c_t\ge \ell$ and $\psi_t\le f$, so again $f^*\ge c_t\ge \ell$ and $f(\bar x)-f^*\le \Delta$. Finally, if the algorithm returns at Line~\ref{line:blw-valid}, then $a^{(t)}$ is a $(\mu,\Delta)$ W-certificate, and Corollary~\ref{cor:valid-gap} yields
	\[
		f(\bar x)-f^*
		\le \Delta\vee \frac{\mu\Delta}{\mu^*}.
	\]

	Finally, suppose $f(\bar x)-f^*\le 4\Delta/5$.   Line~\ref{line:blw-improve} of Algorithm~\ref{alg:blw} cannot be triggered, because $f(x_t)\ge f^*\ge f(\bar x)-4\Delta/5$ for all $x_t\in X$. By the termination result just proved, the algorithm must therefore terminate with $\iscert{}=\texttt{True}$.
\end{proof}

\subsection{The BLW algorithm}\label{subsec:outer_blw_nonsmooth}
Next, we combine the BLCI subroutine with an outer guess-and-check scheme to obtain our bundle-level W-certificate
method (BLW), a method that adapts to the unknown QG modulus for nonsmooth functions. The method maintains an evaluation center $\bar x_k$, a candidate modulus $\mu_k$, an optimality gap estimator $\Delta_k$, and an affine minorant $a^{[k]}$. Here, $\Delta_k$ should be viewed as a guessed optimality gap to be certified, rather than as an a priori valid upper bound on the optimality gap.
\begin{algorithm}[H]
	\small
	\caption{The Bundle-Level W-Certificate Method $\mathrm{BLW}(\bar x_0,\mu_0,m)$}\label{alg:outer}
	\begin{algorithmic}[1]
		\Require Initial point $\bar x_0$, initial modulus estimate $\mu_0\ge \mu^*$, and memory length parameter $m\ge 1$.
		\Ensure A certified sequence $(\bar x_k,\mu_k,\Delta_k,a^{[k]})$.
		\State\label{line:outer-init}\begin{minipage}[t]{0.88\linewidth}
			Query the oracle at $\bar x_0$ and set
			\[
				\Delta_0 := \frac{2\|f'(\bar x_0)\|^2}{\mu_0}.
			\]
		\end{minipage}
		\For{$k=1,2,3,\ldots$}\label{line:outer-for}
		\State\label{line:outer-set} Set $\hat\mu_k:=\mu_{k-1}$ and $\hat\Delta_k:=\frac{3}{4}\Delta_{k-1}$.
		\State\label{line:outer-first-blw}\begin{minipage}[t]{0.84\linewidth}
			Run the first certify-or-improve call
			\[
				(a_{k,1},y_{k,1},\iscert{}_{k,1})\leftarrow
				\mathrm{BLCI}(\bar x_{k-1},\hat\mu_k,\hat\Delta_k,m,\ell_f(\cdot;\bar x_{k-1})).
			\]
		\end{minipage}
		\State\label{line:outer-first-cert} If $\iscert{}_{k,1}$ is \texttt{True}, set $\bar x_k:=\bar x_{k-1}$, $\mu_k:=\hat\mu_k$, $\Delta_k:=\hat\Delta_k$, $a^{[k]}:=a_{k,1}$, and continue to the next outer iteration $k+1$.
		\State\label{line:outer-second-blw}\begin{minipage}[t]{0.84\linewidth}
			Discard $y_{k,1}$ and run the safety certify-or-improve call
			\[
				(a_{k,2},z_k,\iscert{}_{k,2})\leftarrow
				\mathrm{BLCI}(\bar x_{k-1},\hat\mu_k,4\hat\Delta_k/5,m,a_{k,1}).
			\]
		\end{minipage}
		\State\label{line:outer-second-cert} If $\iscert{}_{k,2}$ is \texttt{True}, set $\bar x_k:=\bar x_{k-1}$, $\mu_k:=\hat\mu_k$, $\Delta_k:=\hat\Delta_k$, $a^{[k]}:=a_{k,2}$, and continue to the next outer iteration $k+1$.
		\State\label{line:outer-search-start}\begin{minipage}[t]{0.84\linewidth}
			Otherwise, accept the safe improved point $\bar x_k:=z_k$, set $\mu_{k,0}:=\hat\mu_k$, $\Delta_{k,0}:=\hat\Delta_k$, and $a_0^{[k]}(\cdot):=\ell_f(\cdot;\bar x_k)$.
		\end{minipage}
		\For{$r=1,2,3,\ldots$}\label{line:outer-search-for}
		\State\label{line:outer-embedded-blw}\begin{minipage}[t]{0.80\linewidth}
			Compute
			\begin{equation}\label{eq:embedded-blw}
				(a_r^{[k]},x_{k,r},\iscert{}_{k,r})\leftarrow
				\mathrm{BLCI}(\bar x_k,\mu_{k,r-1},\Delta_{k,r-1},m,a_{r-1}^{[k]}).
			\end{equation}
		\end{minipage}
		\State\label{line:outer-embedded-cert}\begin{minipage}[t]{0.80\linewidth}
			If $\iscert{}_{k,r}$ is \texttt{True}, set
			\[
				\mu_k:=\mu_{k,r-1},\qquad
				\Delta_k:=\Delta_{k,r-1},\qquad
				a^{[k]}:=a_r^{[k]},
			\]
			and continue to the next outer iteration $k+1$.
		\end{minipage}
		\State\label{line:outer-update-mu} Otherwise, discard $x_{k,r}$, set $\mu_{k,r}:=\mu_{k,r-1}/2$ and $\Delta_{k,r}:=2\Delta_{k,r-1}$, and continue to iteration $r+1$.
		\EndFor
		\EndFor
	\end{algorithmic}
\end{algorithm}
The algorithm takes any starting point $\bar x_0$ and any initial QG estimate $\mu_0\ge \mu^*$ as input. If such an upper estimate is unavailable, one can obtain a conservative initialization by querying two points $\hat x_1, \hat x_2 \in X$  with $f(\hat x_1)\ne f(\hat x_2)$ and taking
\begin{equation}\label{eq:mu0-def}
	\mu_0
	=\frac{2\max\{\|f'(\hat x_1)\|^2,\|f'(\hat x_2)\|^2\}}{|f(\hat x_1)-f(\hat x_2)|},
	\qquad
	\bar x_0 \in \argmax_{x\in\{\hat x_1,\hat x_2\}} f(x).
\end{equation}
For convex functions, QG implies Polyak-Łojasiewicz inequality~\cite[Corollary~6(ii)]{bolte2017error}, so $\mu_0$  is an upper bound on the growth modulus  $\mu^*$.

Two important observations will be used throughout the analysis. First, the center sequence generated by Algorithm~\ref{alg:outer} has nonincreasing objective values: a center is either left unchanged, or it is replaced by the point returned by the safety certify-or-improve call in Line~\ref{line:outer-second-blw}. Thus Assumption~\ref{ass:qg}, although stated only on the initial level set, applies to every center appearing in the certificate analysis below. Second,  as shown in the Lemma below, the design of Lines~\ref{line:outer-first-blw}--~\ref{line:outer-search-start} ensures that the distance to solution set is  nonincreasing.

\begin{lemma}\label{lem:safe-two-pass}
	Consider the following two-call safety check:
	\[
		(a_1,y_1,\iscert{}_1)
		\leftarrow
		\mathrm{BLCI}(\bar x,\mu,\Delta,m,a^{(0)}).
	\]
	If $\iscert{}_1=\texttt{False}$, run
	\[
		(a^+,x^+,\iscert{}_2)
		\leftarrow
		\mathrm{BLCI}(\bar x,\mu,4\Delta/5,m,a_1).
	\]
	If either call finishes with a true certificate flag, then the original pair $(\bar x,\mu,\Delta)$ is certified. If both calls return false, then
	\[
		f(x^+)<f(\bar x)-\frac{3}{5}\Delta, \quad  \dist(x^+,X^*)\le \dist(\bar x,X^*).
	\]
\end{lemma}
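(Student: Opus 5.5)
The first claim is a bookkeeping consequence of Proposition~\ref{prop:blw-basic}. The improvement claim will come from the exit condition of the second call. For the distance claim, the plan is to use the failure of the first call to show that the solution set lies inside the level set onto which the second call projects, and then apply the three-point inequality of Lemma~\ref{lem:projection-three-point}.

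\emph{Certification.} If the first call returns \texttt{True}, item~\ref{item:blw-cert-flag-gap} of Proposition~\ref{prop:blw-basic} gives $f(\bar x)-f^*\le \Delta\vee \mu\Delta/\mu^*$ directly. If the first call fails and the second returns \texttt{True}, the same item applied with gap $4\Delta/5$ gives
\[
f(\bar x)-f^*\le \tfrac45\Delta\vee \tfrac{4\mu\Delta}{5\mu^*}\le \Delta\vee \tfrac{\mu\Delta}{\mu^*},
\]
so the pair $(\bar x,\mu,\Delta)$ is again certified.

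\emph{Improvement.} Suppose both calls return \texttt{False}. The second call can only return \texttt{False} at Line~\ref{line:blw-improve}, and it runs with gap $4\Delta/5$. Its output $x^+=x_t$, for some $t\ge1$, therefore satisfies
\[
f(x^+)<f(\bar x)-\tfrac45\cdot\tfrac45\Delta=f(\bar x)-\tfrac{16}{25}\Delta<f(\bar x)-\tfrac35\Delta.
\]

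\emph{Distance.} This is the main step. Since the first call returned \texttt{False}, the contrapositive of item~\ref{item:blw-small-gap-cert} of Proposition~\ref{prop:blw-basic} gives $f(\bar x)-f^*>4\Delta/5$. Let $\ell':=f(\bar x)-4\Delta/5$ be the level used in the second call; then $f^*<\ell'$. Let $x^*\in X^*$ be the projection of $\bar x$ onto $X^*$, which exists because $X^*$ is closed, convex and nonempty. Let $a^{(t)}$ be the aggregate minorant of the second call at the iteration that produced $x_t$; it is an affine minorant of $f$, so $a^{(t)}(x^*)\le f(x^*)=f^*<\ell'$. Hence $x^*$ lies in $\{x\in X: a^{(t)}(x)\le \ell'\}$. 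Applying \eqref{eq:three-point} with $\hat x=x^*$ yields
\[
\|\bar x-x_t\|^2+\|x_t-x^*\|^2\le \|\bar x-x^*\|^2,
\]
and therefore
\[
\dist(x^+,X^*)\le\|x^+-x^*\|\le\|\bar x-x^*\|=\dist(\bar x,X^*).
\]

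The delicate point is that the three-point inequality must be applied to the aggregate $a^{(t)}$ whose level-set projection is exactly $x_t$, i.e.\ the aggregate formed in the same iteration in which $x_t$ was computed. Lemma~\ref{lem:projection-three-point} provides this, because the return at Line~\ref{line:blw-improve} occurs after $x_t$ and $a^{(t)}$ are formed and $t\ge1$. The choice of the safety gap $4\Delta/5$ in the second call is precisely what places $X^*$ strictly inside that level set.
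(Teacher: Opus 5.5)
Your proposal is correct and follows essentially the same route as the paper: certification via the gap bound at $\Delta$ or $4\Delta/5$ plus monotonicity of $\Delta\vee\mu\Delta/\mu^*$, the $(4/5)^2>3/5$ decrease from the second call's false exit, and the distance bound obtained by placing $X^*$ inside the second call's level set and invoking Lemma~\ref{lem:projection-three-point}. The only cosmetic difference is how you get $f^*<f(\bar x)-4\Delta/5$. You take the contrapositive of Item~\ref{item:blw-small-gap-cert} of Proposition~\ref{prop:blw-basic}, whereas the paper reads it off directly from $f^*\le f(y_1)<f(\bar x)-4\Delta/5$; this is the same fact, and the direct version does not need the proposition's hypotheses.
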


\begin{proof}
	If the first call returns with $\iscert{}_1=\texttt{True}$, the true-flag argument in the proof of Proposition~\ref{prop:blw-basic} certifies the pair $(\bar x,\mu,\Delta)$. If the first call returns false and the second call returns with $\iscert{}_2=\texttt{True}$, then the second call certifies the smaller gap $4\Delta/5$. This also certifies the original gap $\Delta$ because the right-hand side of~\eqref{eq:valid-gap} is monotone in $\Delta$.

	It remains to consider the case $\iscert{}_1=\iscert{}_2=\texttt{False}$. The first false return gives $f(y_1)<f(\bar x)-4\Delta/5$. Since $f^*\le f(y_1)$, we obtain $f^*<f(\bar x)-4\Delta/5$. The second call uses the level $\ell_2=f(\bar x)-4\Delta/5$. For every affine minorant $h$ used in that call and every $x^*\in X^*$, we have
	\[
		h(x^*)\le f(x^*)=f^*<\ell_2.
	\]
	Thus $X^*$ is contained in every projection set generated in the second call. Since the second call returns false, $x^+$ is one of the projection points generated by that call. Applying Lemma~\ref{lem:projection-three-point} with $\hat x=x^*$ gives
	\[
		\|x^+-x^*\|^2+\|x^+-\bar x\|^2\le \|\bar x-x^*\|^2,\qquad x^*\in X^*.
	\]
	Taking the infimum over $x^*\in X^*$ yields the distance bound. Finally, the second call uses gap $4\Delta/5$, so its false-return test guarantees a decrease by the factor $(4/5)^2$, which is larger than $3/5$. Thus $f(x^+)<f(\bar x)-3\Delta/5$.
\end{proof}

We now formally establish the convergence guarantees of Algorithm~\ref{alg:outer}. The next proposition is the main bookkeeping step. It keeps the proof entirely in terms of the descent-slowness $s(a,\bar x,\Delta)$ and the product $\mu\Delta$.

\begin{proposition}\label{prop:outer-step}
	Suppose that Assumptions~\ref{ass:model-error} and~\ref{ass:qg} hold. Suppose that at the beginning of the $k$th outer iteration the current center $\bar x_{k-1}$ is certified with parameters $(\mu_{k-1},\Delta_{k-1})$ and that $\mu_{k-1}\ge \mu^*/4$. Then the following properties hold for the $k$th outer iteration of Algorithm~\ref{alg:outer}.
	\begin{enumerate}[label=\arabic*.,ref=\arabic*,leftmargin=2em]
		\item\label{item:outer-step-certification} The output parameters satisfy
		      \begin{equation}\label{eq:product-contract}
			      \mu_k\Delta_k = \frac{3}{4}\mu_{k-1}\Delta_{k-1}.
		      \end{equation}
		      The returned center $\bar x_k$ is certified with parameters $(\mu_k,\Delta_k)$ with $\mu_k\ge \mu^*/4$.

		\item\label{item:outer-step-fejer} The accepted centers are Fej\'er monotone with respect to $X^*$:
		      \[
			      \dist(\bar x_k,X^*)\le \dist(\bar x_{k-1},X^*).
		      \]

		\item\label{item:outer-step-oracle-cost} The iteration takes at most the following number of oracle evaluations:
		      \begin{equation}\label{eq:outer-cost}
			      \frac{325M^2}{2\mu_k\Delta_k}+6+2\log_2\left(\frac{\mu_{k-1}}{\mu_k}\right).
		      \end{equation}
	\end{enumerate}
\end{proposition}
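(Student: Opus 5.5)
We split the $k$th outer iteration according to where it exits: Line~\ref{line:outer-first-cert}, Line~\ref{line:outer-second-cert}, or the inner search loop. For items~\ref{item:outer-step-certification} and~\ref{item:outer-step-fejer} in the first two cases, we have $\bar x_k=\bar x_{k-1}$, $\mu_k=\mu_{k-1}$ and $\Delta_k=\tfrac34\Delta_{k-1}$. The product identity \eqref{eq:product-contract} and Fej\'er monotonicity are then immediate, and certification follows from Lemma~\ref{lem:safe-two-pass} (equivalently item~\ref{item:blw-cert-flag-gap} of Proposition~\ref{prop:blw-basic}). Since centers have nonincreasing values, $\bar x_{k-1}$ lies in the initial level set, so Assumption~\ref{ass:qg} applies there.

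In the search case, Lemma~\ref{lem:safe-two-pass} gives the Fej\'er bound $\dist(\bar x_k,X^*)\le\dist(\bar x_{k-1},X^*)$ and the decrease $f(\bar x_k)<f(\bar x_{k-1})-\tfrac35\hat\Delta_k$. Each inner failure halves $\mu$ and doubles $\Delta$, so $\mu_{k,r}\Delta_{k,r}=\hat\mu_k\hat\Delta_k$ for every $r$. This gives \eqref{eq:product-contract} in all cases. Certification at exit comes from item~\ref{item:blw-cert-flag-gap}.

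It remains to show $\mu_k\ge\mu^*/4$, which requires bounding the number of halvings. Start from the incoming certificate: $f(\bar x_{k-1})-f^*\le\Delta_{k-1}\vee\mu_{k-1}\Delta_{k-1}/\mu^*\le 4\Delta_{k-1}\tfrac{\mu_{k-1}}{\mu^*}\vee\Delta_{k-1}$. Hence $f(\bar x_k)-f^*\le D:=\Delta_{k-1}\max(1,4\mu_{k-1}/\mu^*)$. By item~\ref{item:blw-small-gap-cert}, the inner loop certifies as soon as $\tfrac45\Delta_{k,r-1}\ge D$. Alternatively, once $\mu_{k,r-1}\le\mu^*$, a failed call returns a point with $f$ below $f^*$, which is impossible. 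I expect this second route, using the pointwise W-certificate argument, to be cleaner. Concretely, if $\mu_{k,r-1}\le\mu^*$ and the call had not certified, then Corollary~\ref{cor:valid-gap} would not be needed. Instead I would use item~\ref{item:blw-small-gap-cert} combined with $\Delta_{k,r-1}=\hat\mu_k\hat\Delta_k/\mu_{k,r-1}\ge\tfrac34\mu_{k-1}\Delta_{k-1}/\mu^*$. Checking that this exceeds $\tfrac54 D$ once $\mu_{k,r-1}$ has dropped to about $\mu^*/2$ is the main obstacle. The constants must be tracked carefully: the starting assumption $\mu_{k-1}\ge\mu^*/4$ must propagate to $\mu_k\ge\mu^*/4$, and at most one halving below $\mu^*/2$ should occur. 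If the $f$-value route is too lossy, I would instead use the decrease $\tfrac35\hat\Delta_k$ to sharpen $D$.

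For the oracle cost, we use item~\ref{item:blw-termination} on each BLCI call. The two safety calls cost at most $50M^2/(\hat\mu_k\hat\Delta_k)+2$ and $\tfrac54\cdot50M^2/(\hat\mu_k\hat\Delta_k)+2$. The $r$th inner call costs $50M^2/(\mu_k\Delta_k)+2$ because the product is invariant. Summing these naively grows linearly in the number of inner calls, so we need the warm start. Item~\ref{item:blw-slowness-progress} gives that the squared slowness accumulates across calls. Lemma~\ref{lem:monotonicity} carries it across the doubling of $\Delta$ with a factor: $s(\cdot,\Delta)^2$ at doubled $\Delta$ is at least $s^2$ at $\Delta$, while the threshold $2/(\mu\Delta)$ is invariant. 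Hence the total number of non-final iterations across all inner calls is at most $50M^2/(\mu_k\Delta_k)$, plus $O(1)$ per call. The number of inner calls is $\log_2(\mu_{k-1}/\mu_k)+1$, and each contributes an additive $2$. The bounded first inner call starts from $\ell_f(\cdot;\bar x_k)$, and a final improvement exit, if it occurs, adds one evaluation. Adding $50+62.5+50=162.5=325/2$ for the $M^2/(\mu_k\Delta_k)$ coefficient and collecting the additive constants gives \eqref{eq:outer-cost}.
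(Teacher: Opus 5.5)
Your handling of the product identity, the certification of the returned center, Fej\'er monotonicity, and the oracle count matches the paper. That includes the warm-start telescoping via Lemma~\ref{lem:monotonicity} and the $50+\tfrac{125}{2}+50=\tfrac{325}{2}$ bookkeeping. The genuine gap is the bound $\mu_k\ge\mu^*/4$: you leave it open, and the route you favor cannot close it.

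\begin{itemize}
\item Your $D$ carries a spurious factor $4$. With $D=4\mu_{k-1}\Delta_{k-1}/\mu^*$ and $\Delta_{k,r-1}=\tfrac34\mu_{k-1}\Delta_{k-1}/\mu_{k,r-1}$, the test $\tfrac45\Delta_{k,r-1}\ge D$ only holds once $\mu_{k,r-1}\le\tfrac{3}{20}\mu^*$. That yields merely $\mu_k>\tfrac{3}{40}\mu^*$.
\item The claim that a call with $\mu_{k,r-1}\le\mu^*$ cannot fail does not follow. Even with the sharp bound $f(\bar x_k)-f^*\le\mu_{k-1}\Delta_{k-1}/\mu^*$ (case $\mu_{k-1}\ge\mu^*$), this bound triggers item~\ref{item:blw-small-gap-cert} only once $\mu_{k,r-1}\le\tfrac35\mu^*$.
\item More seriously, no argument using only the incoming certificate and $f(\bar x_k)\le f(\bar x_{k-1})$ can work when $\mu_{k-1}\in[\mu^*/4,\mu^*/2)$. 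There the available bound is $f(\bar x_k)-f^*\le\Delta_{k-1}=\tfrac43\hat\Delta_k$, which exceeds $\tfrac45\hat\Delta_k$. So nothing prevents the first embedded call from failing, after which $\mu_k\le\mu_{k-1}/2<\mu^*/4$.
\end{itemize}

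The safety-check decrease, which you list only as a fallback, is the essential ingredient. With it, it suffices to split on $\mu_{k-1}$ versus $\mu^*$.

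\begin{itemize}
\item If $\mu_{k-1}\le\mu^*$, the incoming certificate gives $f(\bar x_{k-1})-f^*\le\tfrac43\hat\Delta_k$. Lemma~\ref{lem:safe-two-pass} then gives $f(\bar x_k)-f^*<\tfrac43\hat\Delta_k-\tfrac35\hat\Delta_k=\tfrac{11}{15}\hat\Delta_k<\tfrac45\hat\Delta_k$. Item~\ref{item:blw-small-gap-cert} therefore forces the very first embedded call to certify, so $\mu_k=\mu_{k-1}\ge\mu^*/4$. This is exactly why the safety call is run at gap $\tfrac45\hat\Delta_k$.
\item If $\mu_{k-1}>\mu^*$, let $j\ge0$ be minimal with $2^j\hat\Delta_k\ge\tfrac54\bigl(f(\bar x_k)-f^*\bigr)$. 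Certification occurs no later than the $(j+1)$st embedded call, which has parameters $(2^{-j}\hat\mu_k,2^j\hat\Delta_k)$, so $\mu_k\ge2^{-j}\mu_{k-1}$. For $j\ge1$, minimality together with $f(\bar x_k)\le f(\bar x_{k-1})$ and the incoming certificate gives $2^{j-1}\hat\Delta_k<\tfrac53(\mu_{k-1}/\mu^*)\hat\Delta_k$, i.e.\ $2^j<\tfrac{10}{3}\mu_{k-1}/\mu^*$. Hence $\mu_k>\tfrac{3}{10}\mu^*\ge\mu^*/4$.
\end{itemize}

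A minor point on the count: do not add an extra evaluation for an improvement exit. The per-call bound $n_r+2$ already includes both the query at $x_0$ and the terminal query; adding one more would lose the additive constant $6$.
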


\begin{proof}
	By the objective-monotonicity observation above, all ordinary BLCI calls invoked in this outer iteration are centered at points in the initial level set. Hence Proposition~\ref{prop:blw-basic} applies to each of them.

	We first prove the certification and product claims. If either the first or the safety BLCI call returns with a true certificate flag, then Lemma~\ref{lem:safe-two-pass} certifies $\bar x_{k-1}$ with parameters $(\hat\mu_k,\hat\Delta_k)$, and Algorithm~\ref{alg:outer} sets $(\bar x_k,\mu_k,\Delta_k)=(\bar x_{k-1},\hat\mu_k,\hat\Delta_k)$. Hence
	\[
		\mu_k\Delta_k=\hat\mu_k\hat\Delta_k=\frac{3}{4}\mu_{k-1}\Delta_{k-1}.
	\]

	It remains to consider the case in which both calls return false. Then the algorithm accepts the point returned by the safety call as the new center and enters the embedded $\mu$-search. Whenever an embedded BLCI call returns with $\iscert{}=\texttt{True}$, Item~\ref{item:blw-cert-flag-gap} of Proposition~\ref{prop:blw-basic} certifies the fixed center $\bar x_k$ and the parameters used in that call. The embedded search preserves the product because each preceding false-flag step replaces $(\mu,\Delta)$ by $(\mu/2,2\Delta)$. Hence, once the certificate flag is true,
	\[
		\mu_k\Delta_k=\hat\mu_k\hat\Delta_k=\frac{3}{4}\mu_{k-1}\Delta_{k-1}.
	\]

	We now show that the output modulus is at least $\mu^*/4$. If the center is not moved, then $\mu_k=\mu_{k-1}\ge\mu^*/4$. Suppose instead that both calls return false. Lemma~\ref{lem:safe-two-pass} implies
	\[
		f(\bar x_k)<f(\bar x_{k-1})-\frac{3}{5}\hat\Delta_k.
	\]
	If $\mu_{k-1}\le\mu^*$, the previous certification gives $f(\bar x_{k-1})-f^*\le\Delta_{k-1}=\frac43\hat\Delta_k$. Therefore
	\[
		f(\bar x_k)-f^*<\frac{11}{15}\hat\Delta_k
		<\frac{4}{5}\hat\Delta_k.
	\]
	Item~\ref{item:blw-small-gap-cert} of Proposition~\ref{prop:blw-basic} then forces the first embedded BLCI call to return with a true certificate flag, and $\mu_k=\mu_{k-1}\ge\mu^*/4$.

	Now suppose $\mu_{k-1}>\mu^*$. Let $j$ be the smallest nonnegative integer such that
	\[
		2^j\hat\Delta_k\ge \frac{5}{4}\bigl(f(\bar x_k)-f^*\bigr).
	\]
	The embedded BLCI call with parameters $(2^{-j}\hat\mu_k,2^j\hat\Delta_k)$ returns with $\iscert{}=\texttt{True}$ by Item~\ref{item:blw-small-gap-cert}. If $j\ge1$, minimality of $j$ and the previous certification imply
	\[
		2^{j-1}\hat\Delta_k
		<\frac{5}{4}\bigl(f(\bar x_k)-f^*\bigr)
		\le \frac{5}{4}\bigl(f(\bar x_{k-1})-f^*\bigr)
		\le \frac{5}{3}\frac{\mu_{k-1}}{\mu^*}\hat\Delta_k.
	\]
	Thus $2^j<\frac{10}{3}\mu_{k-1}/\mu^*<4\mu_{k-1}/\mu^*$. Since the algorithm obtains a true certificate flag no later than this embedded call, $\mu_k\ge 2^{-j}\mu_{k-1}>\mu^*/4$. The case $j=0$ is immediate.

	The Fej\'er monotonicity claim is immediate if the center is not moved. If the center is moved, then both calls return false, and Lemma~\ref{lem:safe-two-pass} gives
	\[
		\dist(\bar x_k,X^*)\le \dist(\bar x_{k-1},X^*).
	\]
	The embedded search never changes the center after a false flag; it only updates $\mu$, $\Delta$, and the affine warm start.

	It remains to count oracle calls. The two-pass safety check consists of one BLCI call at product $\hat\mu_k\hat\Delta_k$, and, only if the first call returns false, one additional BLCI call at product $4\hat\mu_k\hat\Delta_k/5$. Therefore, its cost is at most
	\[
		\frac{50M^2}{\hat\mu_k\hat\Delta_k}+2+\frac{125M^2}{2\hat\mu_k\hat\Delta_k}+2
	\]
	oracle evaluations. If the safety check returns with a true certificate flag, the desired bound follows immediately. Otherwise, the embedded search is entered. Let $\bar r$ be the index of the embedded BLCI call that first returns with $\iscert{}_{k,\bar r}=\texttt{True}$, so $\bar r=1+\log_2(\mu_{k-1}/\mu_k)$. Let $N_{\mathrm{ter},r}$ denote the last inner iteration of the $r$th embedded call, with the convention $N_{\mathrm{ter},r}=0$ if the call returns in its initial test, and define
	\[
		n_r:=(N_{\mathrm{ter},r}-1)_+.
	\]
	Thus, $n_r$ counts only iterations completed before the terminal step. For every false-flag call, Item~\ref{item:blw-slowness-progress} of Proposition~\ref{prop:blw-basic} shows that the  square of the descent-slowness increases by at least $n_r/(25M^2)$. The same estimate holds through the last pre-exit aggregate of the final true-flag call. Since the gap doubles after each false flag, Lemma~\ref{lem:monotonicity} carries these increases across the warm starts. Consequently,
	\[
		\frac{1}{25M^2}\sum_{r=1}^{\bar r}n_r
		<\frac{2}{\hat\mu_k\hat\Delta_k}.
	\]
	The $r$th embedded call uses at most $n_r+2$ oracle evaluations. Therefore
	\[
		\sum_{r=1}^{\bar r}(n_r+2)
		\le \frac{50M^2}{\hat\mu_k\hat\Delta_k}+2\bar r
		= \frac{50M^2}{\hat\mu_k\hat\Delta_k}+2+2\log_2\left(\frac{\mu_{k-1}}{\mu_k}\right).
	\]
	Adding the possible safety-call overhead gives
	\[
		N_k
		\le \frac{325M^2}{2\hat\mu_k\hat\Delta_k}
		+6+2\log_2\left(\frac{\mu_{k-1}}{\mu_k}\right).
	\]
	Since $\hat\mu_k\hat\Delta_k=\mu_k\Delta_k$, this proves \eqref{eq:outer-cost}.
\end{proof}

Taken together, we recover the following oracle complexity bound for finding an $\epsilon$-optimal solution for any $\epsilon>0$.

\begin{theorem}\label{thm:anytime-complexity}
	Suppose that Assumptions~\ref{ass:model-error} and~\ref{ass:qg} hold. Let $\mu_0\ge \mu^*$ and let $(\bar x_k,\mu_k,\Delta_k)_{k\ge 0}$ be generated by Algorithm~\ref{alg:outer}. For any $\epsilon>0$, define
	\begin{equation}\label{eq:sepsilon}
		S_\epsilon
		:= \max\left\{0,\left\lceil\log_{4/3}\left(\frac{4\mu_0\Delta_0}{\mu^*\epsilon}\right)\right\rceil\right\}
		\le \max\left\{0,\left\lceil\log_{4/3}\left(\frac{8\|f'(\bar x_0)\|^2}{\mu^*\epsilon}\right)\right\rceil\right\}.
	\end{equation}
	Then
	\begin{equation}\label{eq:epsilon-optimality}
		f(\bar x_{S_\epsilon})-f^*\le \epsilon.
	\end{equation}
	The center sequence is Fej\'er monotone with respect to $X^*$:
	\begin{equation*}
		\dist(\bar x_k,X^*)\le \dist(\bar x_{k-1},X^*),\qquad k\ge 1.
	\end{equation*}
	Moreover, the number of first-order oracle evaluations used up to the construction of $\bar x_{S_\epsilon}$ is at most
	\begin{equation}\label{eq:total-complexity}
		N_\epsilon
		\le \frac{3500M^2}{\mu^*\epsilon}
		+6S_\epsilon
		+2\log_2\left(\frac{4\mu_0}{\mu^*}\right)
		+1.
	\end{equation}
\end{theorem}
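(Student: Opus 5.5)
The proof is an induction on the outer iteration $k$ using Proposition~\ref{prop:outer-step}, followed by summing the per-iteration costs in \eqref{eq:outer-cost}. Proposition~\ref{prop:outer-step} requires that the incoming center be certified with $\mu_{k-1}\ge\mu^*/4$, so the one piece of new work is the base case $k=0$.

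\textbf{Base case.} First I show that $\bar x_0$ is certified with parameters $(\mu_0,\Delta_0)$. Since $\mu_0\ge\mu^*$, it suffices to prove
\[
f(\bar x_0)-f^*\le \frac{\mu_0\Delta_0}{\mu^*}=\frac{2\|f'(\bar x_0)\|^2}{\mu^*}.
\]
Let $x^*$ be the projection of $\bar x_0$ onto $X^*$. Convexity gives $f(\bar x_0)-f^*\le \|f'(\bar x_0)\|\,\dist(\bar x_0,X^*)$. Assumption~\ref{ass:qg} at $\bar x_0$ gives $\dist(\bar x_0,X^*)\le\sqrt{2(f(\bar x_0)-f^*)/\mu^*}$. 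Squaring and dividing yields the displayed bound. The condition $\mu_0\ge\mu^*\ge\mu^*/4$ is immediate.

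\textbf{Inductive step.} By induction, Proposition~\ref{prop:outer-step} applies at every $k\ge1$. Objective monotonicity of the centers keeps every center in the initial level set, so Assumption~\ref{ass:qg} holds at each center. The proposition then yields four facts for all $k$:
\begin{itemize}
\item $\mu_k\Delta_k=(3/4)^k\mu_0\Delta_0$;
\item $\mu_k\ge\mu^*/4$;
\item $\bar x_k$ is certified with $(\mu_k,\Delta_k)$;
\item Fej\'er monotonicity, which is exactly the claimed distance statement.
\end{itemize}
Because $\mu_k\ge\mu^*/4$, we have $\Delta_k=\mu_k\Delta_k/\mu_k\le 4\mu_k\Delta_k/\mu^*$. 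Certification therefore gives
\[
f(\bar x_k)-f^*\le \Delta_k\vee\frac{\mu_k\Delta_k}{\mu^*}\le \frac{4(3/4)^k\mu_0\Delta_0}{\mu^*}.
\]
This is at most $\epsilon$ once $k=S_\epsilon$, which proves \eqref{eq:epsilon-optimality}. Substituting $\mu_0\Delta_0=2\|f'(\bar x_0)\|^2$ gives the inequality in \eqref{eq:sepsilon}.

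\textbf{Complexity.} If $S_\epsilon=0$, only the initial oracle query is used, and the bound holds trivially. Otherwise, $N_\epsilon$ is $1$ (the initial query) plus the sum over $k=1,\dots,S_\epsilon$ of the bound \eqref{eq:outer-cost}. I handle the three terms separately.
\begin{itemize}
\item The $6$'s sum to $6S_\epsilon$.
\item The logarithmic terms telescope to $2\log_2(\mu_0/\mu_{S_\epsilon})$, which is at most $2\log_2(4\mu_0/\mu^*)$ because $\mu_{S_\epsilon}\ge\mu^*/4$.
\item The main term is a geometric sum:
\[
\sum_{k=1}^{S}\frac{325M^2}{2\mu_k\Delta_k}=\frac{325M^2}{2\mu_0\Delta_0}\sum_{k=1}^{S}(4/3)^k\le\frac{325M^2}{2\mu_0\Delta_0}\cdot 4\,(4/3)^{S}.
\]
By minimality of the ceiling defining $S=S_\epsilon\ge1$, we have $(4/3)^{S-1}<4\mu_0\Delta_0/(\mu^*\epsilon)$. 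Hence $(4/3)^S\le \tfrac{16}{3}\,\mu_0\Delta_0/(\mu^*\epsilon)$, and the main term is at most $\tfrac{325\cdot 32}{3}\,M^2/(\mu^*\epsilon)\approx 3467\,M^2/(\mu^*\epsilon)\le 3500\,M^2/(\mu^*\epsilon)$.
\end{itemize}

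The step I expect to need the most care is the base-case certification: recognizing that the specific choice of $\Delta_0$ is exactly the PL-type bound implied by quadratic growth and convexity. After that, the only remaining risk is tracking the constants through the geometric sum so that they land under $3500$.
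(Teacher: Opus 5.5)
Your proposal is correct and follows the paper's proof essentially step for step. The shared steps are the base-case certification of $(\mu_0,\Delta_0)$ via convexity plus quadratic growth, the induction through Proposition~\ref{prop:outer-step}, the bound $f(\bar x_k)-f^*\le 4\mu_k\Delta_k/\mu^*$, and the geometric summation landing at $\frac{325\cdot 32}{3}<3500$; normalizing the sum by $\mu_0\Delta_0$ instead of $\mu_{S_\epsilon}\Delta_{S_\epsilon}$ is only cosmetic.
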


\begin{proof}
	The upper bound on $S_\epsilon$ follows from the initial choice of $\Delta_0$ in Algorithm~\ref{alg:outer}, which gives $\mu_0\Delta_0=2\|f'(\bar x_0)\|^2$.
	We first verify the base case required for the outer induction. Let
	$\delta_0:=f(\bar x_0)-f^*$. Convexity and Assumption~\ref{ass:qg} give
	\[
		\delta_0
		\le \|f'(\bar x_0)\|\dist(\bar x_0,X^*)
		\le \|f'(\bar x_0)\|\sqrt{\frac{2\delta_0}{\mu^*}}.
	\]
	Consequently, $\delta_0\le2\|f'(\bar x_0)\|^2/\mu^*$, and hence
	\[
		f(\bar x_0)-f^*
		\le \frac{\mu_0}{\mu^*}\Delta_0
		\le \max\left\{1,\frac{\mu_0}{\mu^*}\right\}\Delta_0.
	\]
	Thus, the initial center is certified with parameters
	$(\mu_0,\Delta_0)$.

	Applying Proposition~\ref{prop:outer-step} inductively gives, for all $k\ge 0$,
	\begin{equation}\label{eq:inductive-product}
		\mu_k\Delta_k = \left(\frac{3}{4}\right)^k\mu_0\Delta_0,
		\qquad
		\mu_k\ge \frac{\mu^*}{4},
	\end{equation}
	and the current center $\bar x_k$ is certified with parameters $(\mu_k,\Delta_k)$. Item~\ref{item:outer-step-fejer} of the same proposition gives the claimed Fej\'er monotonicity. Therefore the gap bound from Section~\ref{sec:affine_certificate} yields
	\begin{equation}\label{eq:gap-from-product}
		\begin{aligned}
			f(\bar x_k)-f^*
			 & \le \max\left\{1,\frac{\mu_k}{\mu^*}\right\}\Delta_k \\
			 & \le \frac{4\mu_k\Delta_k}{\mu^*}.
		\end{aligned}
	\end{equation}
	By the definition of $S_\epsilon$, we have $\mu_{S_\epsilon}\Delta_{S_\epsilon}\le \mu^*\epsilon/4$. Substituting this into \eqref{eq:gap-from-product} proves $f(\bar x_{S_\epsilon})-f^*\le \epsilon$.

	It remains to sum the oracle costs. The initialization in Line~\ref{line:outer-init} uses one oracle evaluation at $\bar x_0$. If $S_\epsilon=0$, this is the only evaluation and the claimed bound follows. Hence, assume $S_\epsilon\ge1$. Since $\mu_{S_\epsilon}\ge \mu^*/4$ by Item~\ref{item:outer-step-certification} of Proposition~\ref{prop:outer-step}, summing Item~\ref{item:outer-step-oracle-cost} of Proposition~\ref{prop:outer-step} and including the initial oracle  evaluation at $\bar x_0$ gives
	\begin{equation}\label{eq:oracle-sum}
		N_\epsilon
		\le \frac{325M^2}{2}\sum_{k=1}^{S_\epsilon}\frac{1}{\mu_k\Delta_k}
		+6S_\epsilon
		+2\log_2\left(\frac{4\mu_0}{\mu^*}\right)+1.
	\end{equation}
	It follows from the definition of $S_\epsilon$ that
	\[
		\mu_{S_\epsilon}\Delta_{S_\epsilon}
		> \mu_0\Delta_0\left(\frac{3}{4}\right)^{\log_{4/3}\left(\frac{4\mu_0\Delta_0}{\mu^*\epsilon}\right)+1}
		= \frac{3\mu^*\epsilon}{16}.
	\]
	Thus we obtain
	\[
		\sum_{k=1}^{S_\epsilon}\frac{1}{\mu_k\Delta_k}
		= \frac{1}{\mu_{S_\epsilon}\Delta_{S_\epsilon}}
		\sum_{j=0}^{S_\epsilon-1}\left(\frac{3}{4}\right)^j
		\le \frac{4}{\mu_{S_\epsilon}\Delta_{S_\epsilon}}
		< \frac{64}{3\mu^*\epsilon}.
	\]
	Substituting this into \eqref{eq:oracle-sum} yields
	\[
		N_\epsilon
		\le \frac{3500M^2}{\mu^*\epsilon}
		+6S_\epsilon
		+2\log_2\left(\frac{4\mu_0}{\mu^*}\right)+1,
	\]
	which is \eqref{eq:total-complexity}.
\end{proof}

Theorem~\ref{thm:anytime-complexity} provides the optimal anytime guarantee: the algorithm does not require $\epsilon$ as an input, yet the leading term of the complexity bound achieves the optimal nonsmooth scaling $M^2/(\mu^*\epsilon)$. Additionally, the observable quantity $\mu_k \Delta_k$ provides a reliable proxy for the optimality gap and can therefore be used as a practical stopping criterion. We discuss this point further in Section~\ref{sec:numerics}.

\section{The uniformly optimal A-BLW method}\label{sec:ablw-holder}

In this section, we present the accelerated bundle-level certify-or-improve (A-BLCI) subroutine for H\"older smooth convex objectives and then use it inside the same outer search to obtain the accelerated bundle-level W-certificate (A-BLW) method. The subroutine has the same input-output convention as BLCI in Algorithm~\ref{alg:blw}: it takes an affine minorant as input and returns an affine minorant, a point, and the certificate flag $\iscert{}$. The only algorithmic difference is that A-BLCI evaluates cuts along an accelerated search sequence.
\begin{algorithm}[H]
	\small
	\caption{\small The Accelerated Bundle-Level Certify-or-Improve Method $\mathrm{A\text{-}BLCI}(\bar x,\mu,\Delta,m,a^{(0)})$}\label{alg:ablw}
	\begin{algorithmic}[1]
		\Require Evaluation center $\bar x$, QG modulus estimate $\mu$, gap estimate $\Delta$, memory length $m$, and affine minorant $a^{(0)}$.
		\Ensure An affine minorant $a^+$,  a point $x^+$ that is either $\bar x$ or a potentially improved solution, and a certificate flag $\iscert{}$.
		\State\label{line:ablw-initial-cert} Set $\ell=f(\bar x)-\Delta$. If $\{x\in X:a^{(0)}(x)\le \ell\}=\emptyset$, return $(a^{(0)},\bar x,\texttt{True})$.
		\State\label{line:ablw-set-xzero} Set
		\Statex \hspace{\algorithmicindent}\begin{minipage}{.9\linewidth}
			\begin{equation}
				x_0\in\argmin_{x\in X}\left\{\norm{x-\bar x}^2:a^{(0)}(x)\le \ell\right\}.
			\end{equation}
		\end{minipage}
		\State Initialize $\bar y_0=y_0=x_0$.\label{line:ablw-initialize-y}
		\For{$t=1,2,3,\ldots$}\label{line:ablw-loop}
		\State Query the oracle at $y_{t-1}$ to evaluate $\ell_f(\cdot;y_{t-1})$.\label{line:ablw-query-y}
		\State For the max-model\label{line:ablw-model}
		\Statex \hspace{\algorithmicindent}\begin{minipage}{.9\linewidth}
			\[
				\psi_t(x):=a^{(t-1)}(x)\vee \max_{[t-m]_+\le i\le t-1}\ell_f(x;y_i).
			\]
		\end{minipage}
		\State\label{line:ablw-loop-cert} Set $c_t:=\inf_{x\in X}\psi_t(x)$. If $c_t\ge \ell$, set $a_{c_t}(\cdot)\equiv c_t$ and return $(a_{c_t},\bar x,\texttt{True})$.
		\State Solve \label{line:ablw-projection}
		\Statex \hspace{\algorithmicindent}\begin{minipage}{.9\linewidth}
			\begin{equation}
				\begin{aligned}
					x_t\in\argmin_{x\in X}\quad & \norm{x-\bar x}^2                                 \\
					\st\quad                    & a^{(t-1)}(x)\le \ell,                             \\
					                            & \ell_f(x;y_i)\le \ell,\qquad [t-m]_+\le i\le t-1.
				\end{aligned}
			\end{equation}
		\end{minipage}
		\State Extract dual multipliers from the previous problem and normalize them to obtain $\lambda_a\in\R_+$ and $\lambda_i\in\R_+$ with $\lambda_a+\sum_i\lambda_i=1$. Set\label{line:ablw-aggregate}
		\Statex \hspace{\algorithmicindent}\begin{minipage}{.9\linewidth}
			\[
				a^{(t)}(\cdot):=\lambda_a a^{(t-1)}(\cdot)+\sum_{i=[t-m]_+}^{t-1}\lambda_i\ell_f(\cdot;y_i).
			\]
		\end{minipage}
		\State If $\norm{x_t-\bar x}^2/\Delta^2\ge 2/(\mu\Delta)$, return $(a^{(t)},\bar x,\texttt{True})$.\label{line:ablw-valid-exit}
		\State With $\alpha_t=2/(t+1)$, set $\bar y_t=\alpha_t x_t+(1-\alpha_t)\bar y_{t-1}$ and $y_t=\alpha_{t+1}x_t+(1-\alpha_{t+1})\bar y_t$.\label{line:ablw-update-y}
		\State\label{line:ablw-improvement-exit} Query the oracle at $\bar y_t$.  If $f(\bar y_t)<f(\bar x)-4\Delta/5$, return $(a^{(t)},\bar y_t,\texttt{False})$.
		\EndFor
	\end{algorithmic}
\end{algorithm}
We now formally introduce the more general Hölder smoothness condition for the problems studied in this section.
\begin{assumption}[H\"older smoothness]\label{ass:holder-smooth-ablw}
	The function $f$ is $(\rho,M_\rho)$-H\"older smooth on $X$ for some $\rho\in[0,1]$ and $M_\rho>0$, namely, for every $x,y\in X$ and every $f'(x)\in\partial f(x)$,
	\begin{equation}\label{eq:holder-smooth-ablw}
		f(y)\le f(x)+\inner{f'(x)}{y-x}+\frac{M_\rho}{1+\rho}\norm{y-x}^{1+\rho}.
	\end{equation}
\end{assumption}
Since achieving the optimal oracle complexity for smooth optimization requires acceleration, Algorithm~\ref{alg:ablw} augments BLCI with the Nesterov-type estimate sequence from \cite{lan2015bundle,nesterov2003introductory}. The cutting planes are evaluated at the search sequence $\{y_t\}$, while the function-value decrease test uses the estimate sequence $\{\bar y_t\}$. Apart from these additional sequences, the certification logic is the same as in Algorithm~\ref{alg:blw}: a true certificate flag either follows from the affine W-certificate test or from a direct lower bound $c_t\ge \ell$, which implies that
\begin{equation*}
	f(\bar x)-f^* \le \Delta \vee \frac{\mu\Delta}{\mu^*}.
\end{equation*}

The following lemma characterizes the projection geometry of the iterates. Its proof is identical to that of Lemma~\ref{lem:projection-three-point} and is therefore omitted.
\begin{lemma}[Projection three-point inequality]\label{lem:ablw-threepoint}
	Consider iterates $\{x_t\}$ and aggregate models $a^{(t)}$ generated by Algorithm~\ref{alg:ablw}. Each $a^{(t)}$ is an affine minorant of $f$. For every aggregate model generated before termination,
	\begin{equation}\label{eq:ablw-three-point}
		\norm{\bar x-x_t}^2+\norm{x_t-\hat x}^2\le \norm{\hat x-\bar x}^2,\qquad \forall \hat x\in\{x\in X:a^{(t)}(x)\le \ell\}.
	\end{equation}
	Consequently, whenever $x_{t+1}$ is generated,
	\begin{equation}\label{eq:ablw-three-point-step}
		\norm{x_{t+1}-x_t}^2+\norm{x_t-\bar x}^2\le \norm{x_{t+1}-\bar x}^2.
	\end{equation}
\end{lemma}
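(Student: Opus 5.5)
The plan is to transcribe the proof of Lemma~\ref{lem:projection-three-point} almost verbatim. The only change is that the oracle cuts are now taken at the search points $y_i$ instead of at $x_i$, and the earlier argument never used where the cuts were taken. First, I check the minorant property by induction on $t$. The input $a^{(0)}$ is an affine minorant, and each $\ell_f(\cdot;y_i)$ is an affine minorant by convexity, since the left inequality in \eqref{eq:holder-smooth-ablw} (equivalently, the subgradient inequality) gives $\ell_f(x;y_i)\le f(x)$. Once I show below that the multipliers normalize to a convex combination, $a^{(t)}$ is a convex combination of affine minorants and is therefore itself an affine minorant.

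Next, I justify the multipliers at an iteration $t$ that reaches Line~\ref{line:ablw-projection}. Passing the test in Line~\ref{line:ablw-loop-cert} means $c_t<\ell$, so some $z\in X$ satisfies $h(z)<\ell$ for every constraint function $h$ in the collection $\mathcal H_t$ (the function $a^{(t-1)}$ together with the active cuts). This is a Slater point. The constraints are affine and the objective is strongly convex, so the projection $x_t$ exists and KKT multipliers $\eta_h\ge 0$ exist, exactly as in \eqref{eq:blw-kkt}. To normalize I need $\Lambda=\sum\eta_h>0$, which in turn requires $\bar x$ to be infeasible for the subproblem.

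I will prove that infeasibility by the same induction as before. At $t=1$: if $a^{(0)}(\bar x)>\ell$ we are done. Otherwise $x_0=\bar x=y_0$, and then $\ell_f(\bar x;y_0)=f(\bar x)>\ell$. At later iterations infeasibility follows from $a^{(t-1)}(\bar x)>\ell$, which comes out of the previous step. With $\Lambda>0$, the normalized KKT system says that $x_t$ is the projection of $\bar x$ onto $C_t=\{x\in X:a^{(t)}(x)\le\ell\}$; complementarity gives $a^{(t)}(x_t)=\ell$. Since $x_t\neq\bar x$, we get $\bar x\notin C_t$, i.e.\ $a^{(t)}(\bar x)>\ell$, which closes the induction. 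The variational inequality $\langle x_t-\bar x,\hat x-x_t\rangle\ge0$ for $\hat x\in C_t$ then yields \eqref{eq:ablw-three-point} after expanding $\|\bar x-\hat x\|^2$.

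For \eqref{eq:ablw-three-point-step}, I note that $x_{t+1}$ satisfies the constraint $a^{(t)}(x_{t+1})\le\ell$ imposed in the next projection subproblem, so $x_{t+1}\in C_t$ and I can take $\hat x=x_{t+1}$.

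The only step needing genuine care is the base case of the infeasibility induction. There I must make sure the first cut is taken at $y_0=x_0$. This holds by Line~\ref{line:ablw-initialize-y} and the query in Line~\ref{line:ablw-query-y} at $t=1$. Later cuts at $y_i\ne x_i$ cause no trouble, because they enter only as affine minorants in the constraint set.
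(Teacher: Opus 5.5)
Your proposal is correct and matches the paper, which says only that the proof is identical to that of Lemma~\ref{lem:projection-three-point}; your steps are exactly the intended ones: the Slater point from $c_t<\ell$, the inductive infeasibility of $\bar x$ (base case via $y_0=x_0$) giving $\Lambda>0$, the identification of $x_t$ as the projection onto $\{x\in X:a^{(t)}(x)\le\ell\}$, and the variational inequality. One small citation slip: the minorant property of the cuts $\ell_f(\cdot;y_i)$ comes from convexity of $f$ alone, because \eqref{eq:holder-smooth-ablw} as stated in Assumption~\ref{ass:holder-smooth-ablw} contains only the upper bound and has no left inequality.
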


With that, the next proposition analyzes how the descent-slowness $s(a^{(t)},\bar x,\Delta)$ grows in each iteration of an A-BLCI call. It also shows that the certificate flag plays the same role as it does in Algorithm~\ref{alg:blw}.

\begin{proposition}\label{prop:ablw}
	Suppose that Assumptions~\ref{ass:qg} and~\ref{ass:holder-smooth-ablw} hold, and suppose that the evaluation center $\bar x$ lies in the initial level set from Assumption~\ref{ass:qg}. Consider one call to Algorithm~\ref{alg:ablw} with parameters $(\bar x,\mu,\Delta)$, memory length $m \ge 1$, and initial affine minorant $a^{(0)}$. Let $N_{\mathrm{ter}}$ denote the last iteration, and define
	\[
		Q_\rho(\mu,\Delta):=\left(\frac{M_\rho^2}{\mu^{1+\rho}\Delta^{1-\rho}}\right)^{\frac{1}{1+3\rho}}.
	\]
	Then the following properties hold for the iterates generated by the A-BLCI algorithm.
	\begin{enumerate}[label=\arabic*.,ref=\arabic*,leftmargin=2em]
		\item\label{item:ablw-slowness-progress} For every generated main iterate $x_t$ with $t<N_{\mathrm{ter}}$,
		      \begin{equation}\label{eq:ablw-slowness-progress}
			      s(a^{(t)},\bar x,\Delta)^2\ge s(a^{(0)},\bar x,\Delta)^2+\frac{1}{100}\left(\frac{t^{1+3\rho}}{M_\rho^2\Delta^{2\rho}}\right)^{\frac{1}{1+\rho}}.
		      \end{equation}
		      Consequently, if Algorithm~\ref{alg:ablw} terminates at Line~\ref{line:ablw-improvement-exit} with a significantly improved solution, the associated affine minorant satisfies
		      \begin{equation}\label{eq:ablw-slowness-progress-final}
			      s(a^{(N_{\mathrm{ter}})},\bar x,\Delta)^2\ge s(a^{(0)},\bar x,\Delta)^2+\frac{1}{100}\left(\frac{(N_{\mathrm{ter}}-1)^{1+3\rho}}{M_\rho^2\Delta^{2\rho}}\right)^{\frac{1}{1+\rho}}.
		      \end{equation}
		\item\label{item:ablw-termination} The A-BLCI algorithm terminates within $400Q_\rho(\mu,\Delta)+4$ oracle evaluations for every $\rho\in[0,1]$.
		\item\label{item:ablw-cert-flag-gap} When the A-BLCI algorithm returns with $\iscert{}=\texttt{True}$ for the pair $(\bar x,\mu,\Delta)$, we have
		      \[
			      f(\bar x)-f^*\le \Delta\vee \frac{\mu\Delta}{\mu^*}.
		      \]
		\item\label{item:ablw-small-gap-cert} If $f(\bar x)-f^*\le 4\Delta/5$, the A-BLCI algorithm returns with $\iscert{}=\texttt{True}$, and hence certifies the pair $(\bar x,\mu,\Delta)$.
	\end{enumerate}
\end{proposition}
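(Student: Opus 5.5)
The plan follows the proof of Proposition~\ref{prop:blw-basic}. The one new ingredient is an accelerated one-step recursion that replaces the Lipschitz estimate $\|x_{j-1}-x_j\|\ge \Delta/(5M)$. Items~\ref{item:ablw-cert-flag-gap} and~\ref{item:ablw-small-gap-cert} carry over verbatim from the earlier proof. A true flag comes either from an empty level set, from $c_t\ge\ell$ (so $f^*\ge\ell$), or from the W-certificate test, in which case Corollary~\ref{cor:valid-gap} applies. If $f(\bar x)-f^*\le 4\Delta/5$, the improvement exit can never fire, so termination (Item~\ref{item:ablw-termination}) forces a true flag. By Lemma~\ref{lem:ablw-threepoint}, $x_t$ is the projection of $\bar x$ onto $\{a^{(t)}\le\ell\}$. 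Hence $s(a^{(t)},\bar x,\Delta)=\|\bar x-x_t\|/\Delta$, and
\[
\|\bar x-x_t\|^2\ge\|\bar x-x_0\|^2+\sum_{i=1}^t d_i^2,\qquad d_i:=\|x_i-x_{i-1}\|.
\]
So Item~\ref{item:ablw-slowness-progress} reduces to a lower bound on $\sum_{i\le t}d_i^2$.

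For that lower bound I will use Lan's accelerated bundle-level estimate. From the update rule, $y_{t-1}=\alpha_t x_{t-1}+(1-\alpha_t)\bar y_{t-1}$, and hence $\bar y_t-y_{t-1}=\alpha_t(x_t-x_{t-1})$. Assumption~\ref{ass:holder-smooth-ablw} together with convexity gives
\[
f(\bar y_t)\le (1-\alpha_t)f(\bar y_{t-1})+\alpha_t\ell_f(x_t;y_{t-1})+\tfrac{M_\rho}{1+\rho}\alpha_t^{1+\rho}d_t^{1+\rho}.
\]
The cut at $y_{t-1}$ is a constraint in the projection defining $x_t$, so $\ell_f(x_t;y_{t-1})\le\ell$. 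Subtracting $\ell$ yields a linear recursion for $f(\bar y_t)-\ell$. Dividing by $\Gamma_t=2/(t(t+1))$ and unrolling, with $\alpha_1=1$ killing the initial term, gives
\[
f(\bar y_t)-\ell\le \Gamma_t\tfrac{M_\rho}{1+\rho}\sum_{i=1}^t \frac{\alpha_i^{1+\rho}}{\Gamma_i}\,d_i^{1+\rho},
\]
where $\alpha_i^{1+\rho}/\Gamma_i\lesssim i^{1-\rho}$. For $t<N_{\mathrm{ter}}$ the improvement exit did not fire, so the left side is at least $\Delta/5$.

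I then apply H\"older's inequality with exponents $2/(1-\rho)$ and $2/(1+\rho)$; for $\rho=1$ this degenerates to a max bound. This gives $\sum_i i^{1-\rho}d_i^{1+\rho}\lesssim t^{3(1-\rho)/2}\bigl(\sum_i d_i^2\bigr)^{(1+\rho)/2}$. Combined with $\Gamma_t\sim t^{-2}$, it yields
\[
\Delta/5\lesssim M_\rho\, t^{-(1+3\rho)/2}\Bigl(\sum_i d_i^2\Bigr)^{(1+\rho)/2},
\]
that is, $\sum_i d_i^2\gtrsim \bigl(\Delta^2t^{1+3\rho}/M_\rho^2\bigr)^{1/(1+\rho)}$. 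Dividing by $\Delta^2$ produces exactly the exponent $\Delta^{-2\rho/(1+\rho)}$ in \eqref{eq:ablw-slowness-progress}. The final-step version \eqref{eq:ablw-slowness-progress-final} follows as before by applying the bound to $t=N_{\mathrm{ter}}-1$.

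For Item~\ref{item:ablw-termination}, suppose the slowness exit fails at iteration $t$. Monotonicity of the projection distances gives
\[
\frac{1}{100}\Bigl(\frac{(t-1)^{1+3\rho}}{M_\rho^2\Delta^{2\rho}}\Bigr)^{1/(1+\rho)}<\frac{2}{\mu\Delta}.
\]
Solving for $t-1$ gives $t-1\le 200^{(1+\rho)/(1+3\rho)}Q_\rho(\mu,\Delta)\le 200\,Q_\rho(\mu,\Delta)$. Each iteration costs two oracle calls (at $y_{t-1}$ and $\bar y_t$), which gives $400Q_\rho+4$.

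The main obstacle is the constant bookkeeping in the H\"older step. That means bounding $\sum_i i^{2}$-type sums uniformly in $\rho$ so that the absolute constant $1/100$ survives for every $\rho\in[0,1]$, including the endpoint cases $\rho=0,1$. A second delicate point is the indexing: making sure the non-exit inequality $f(\bar y_i)-\ell\ge\Delta/5$ is invoked only for iterations that actually passed the improvement test.
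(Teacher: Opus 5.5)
Your proposal is correct and follows the paper's proof step for step. The shared ingredients are the identity $\bar y_t-y_{t-1}=\alpha_t(x_t-x_{t-1})$, the H\"older-smoothness recursion for $f(\bar y_t)-\ell$ weighted by $\omega_t=1/\Gamma_t=t(t+1)/2$ and telescoped (so the non-exit bound $f(\bar y_t)-\ell\ge\Delta/5$ is needed only at the final index $t$), H\"older's inequality with exponents $2/(1-\rho)$ and $2/(1+\rho)$, the three-point telescoping, and the same termination and certification arguments. The constant you flag is settled as you anticipate, via $\omega_t\ge t^2/2$, $\omega_j\alpha_j^{1+\rho}\le 2^\rho j^{1-\rho}$ and $\sum_{j\le t}j^2\le t^3$, which leaves the factor $\bigl(\tfrac{1+\rho}{5\cdot 2^{\rho+1}}\bigr)^{2/(1+\rho)}\ge 1/100$ for all $\rho\in[0,1]$.
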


\begin{proof}
	As in the proof of Proposition~\ref{prop:blw-basic}, each aggregate model remains an affine minorant of $f$, and whenever the sublevel set is nonempty,
	\[
		d(a^{(t)},\bar x,\Delta)=\norm{\bar x-x_t},
		\qquad
		s(a^{(t)},\bar x,\Delta)=\frac{\norm{\bar x-x_t}}{\Delta}.
	\]
	Fix $t<N_{\mathrm{ter}}$. We first relate the away step $d_j:=\norm{x_j-x_{j-1}}$ to the function value gap $f(\bar y_j)-\ell$ for each $1\le j\le t$. By the update rule in Line~\ref{line:ablw-update-y}, we have
	\[
		\bar y_j-y_{j-1}=\alpha_j(x_j-x_{j-1}).
	\]
	By the $(\rho,M_\rho)$-H\"older smoothness of $f$,
	\[
		\begin{aligned}
			f(\bar y_j)-\ell
			 & \le \ell_f(\bar y_j;y_{j-1})-\ell+\frac{M_\rho}{1+\rho}\norm{\bar y_j-y_{j-1}}^{1+\rho}                                       \\
			 & =(1-\alpha_j)\ell_f(\bar y_{j-1};y_{j-1})+\alpha_j\ell_f(x_j;y_{j-1})-\ell+\frac{M_\rho}{1+\rho}\alpha_j^{1+\rho}d_j^{1+\rho} \\
			 & \le (1-\alpha_j)(f(\bar y_{j-1})-\ell)+\frac{M_\rho}{1+\rho}\alpha_j^{1+\rho}d_j^{1+\rho},
		\end{aligned}
	\]
	where the last inequality uses the convexity of $f$ and the projection-step constraint $\ell_f(x_j;y_{j-1})\le \ell$ from Line~\ref{line:ablw-projection}. Multiplying the preceding recursion by $\omega_j=j(j+1)/2$, using $\omega_{j-1}=\omega_j(1-\alpha_j)$, and summing from $j=1$ to $t$ gives a telescoping cancellation of $\{f(\bar y_j)-\ell\}$ and yields
	\[
		\omega_t[f(\bar y_t)-\ell]\le \frac{M_\rho}{1+\rho}\sum_{j=1}^t \omega_j\alpha_j^{1+\rho}d_j^{1+\rho}.
	\]
	Since $t<N_{\mathrm{ter}}$, the improvement test in Line~\ref{line:ablw-improvement-exit} has not been passed at $\bar y_t$, so $f(\bar y_t)-\ell\ge \Delta/5$. Hence
	\begin{equation}\label{eq:ablw-holder-step-sum}
		\frac{t^2\Delta}{10}\le \frac{M_\rho}{1+\rho}\sum_{j=1}^t \omega_j\alpha_j^{1+\rho}d_j^{1+\rho}.
	\end{equation}
	Next, since
	\[
		\omega_j\alpha_j^{1+\rho}=\frac{j(j+1)}{2}\left(\frac{2}{j+1}\right)^{1+\rho}=2^\rho\frac{j}{(j+1)^\rho}\le 2^\rho j^{1-\rho},
	\]
	H\"older's inequality gives
	\[
		\begin{aligned}
			\sum_{j=1}^t j^{1-\rho}d_j^{1+\rho}
			 & \le \left(\sum_{j=1}^t d_j^2\right)^{\frac{1+\rho}{2}}
			\left(\sum_{j=1}^t j^2\right)^{\frac{1-\rho}{2}}                                  \\
			 & \le \left(\sum_{j=1}^t d_j^2\right)^{\frac{1+\rho}{2}}t^{\frac{3(1-\rho)}{2}}.
		\end{aligned}
	\]
	Substituting these bounds into \eqref{eq:ablw-holder-step-sum} and rearranging yields
	\[
		\sum_{j=1}^t d_j^2\ge \left(\frac{1+\rho}{5\cdot 2^{\rho+1}}\right)^{\frac{2}{1+\rho}}
		\left(\frac{\Delta^2}{M_\rho^2}\right)^{\frac{1}{1+\rho}}
		t^{\frac{1+3\rho}{1+\rho}}.
	\]
	Simple calculus shows that $\left(\frac{1+\rho}{5\cdot 2^{\rho+1}}\right)^{\frac{2}{1+\rho}}$ is at least $(1/10)^2=1/100$ when $\rho \in [0,1]$. Combining this estimate with the three-point inequality \eqref{eq:ablw-three-point-step}, we obtain the following lower bound after $t$ iterations:
	\[
		\norm{x_t-\bar x}^2\ge \norm{x_0-\bar x}^2+\sum_{j=1}^t d_j^2
		\ge \norm{x_0-\bar x}^2+\frac{1}{100}\left(\frac{\Delta^2}{M_\rho^2}\right)^{\frac{1}{1+\rho}}t^{\frac{1+3\rho}{1+\rho}}.
	\]
	Dividing both sides by $\Delta^2$ proves \eqref{eq:ablw-slowness-progress}. If the algorithm terminates at Line~\ref{line:ablw-improvement-exit}, then the final estimate point may trigger the improvement test, so applying the preceding estimate to the previously generated main iterates gives \eqref{eq:ablw-slowness-progress-final}.

	We next prove termination. If the algorithm reaches Line~\ref{line:ablw-valid-exit} at iteration $t$ and has not returned by the slowness test, then
	\[
		s(a^{(t)},\bar x,\Delta)^2<\frac{2}{\mu\Delta}.
	\]
	The progress estimate available before the improvement test at $\bar y_t$
	extends only through $x_{t-1}$. Therefore
	\eqref{eq:ablw-slowness-progress}, the monotonicity of the projection
	distances, and the nonnegativity of $s(a^{(0)},\bar x,\Delta)^2$ give
	\[
		\left(\frac{(t-1)^{1+3\rho}}{M_\rho^2\Delta^{2\rho}}\right)^{\frac{1}{1+\rho}}
		<\frac{200}{\mu\Delta}.
	\]
	Equivalently, every nonterminal iteration index $t$ satisfies
	\[
		t-1<200^{\frac{1+\rho}{1+3\rho}}Q_\rho(\mu,\Delta)
		\le 200Q_\rho(\mu,\Delta),
	\]
	so the last iteration is at most $\lceil 200Q_\rho(\mu,\Delta)\rceil+1$. Since each iteration makes at most two oracle evaluations, the total number of oracle evaluations is at most
	\[
		2\lceil 200Q_\rho(\mu,\Delta)\rceil+2
		\le 400Q_\rho(\mu,\Delta)+4,
	\]
	which proves Item~\ref{item:ablw-termination}.

	The proofs of Items~\ref{item:ablw-cert-flag-gap} and~\ref{item:ablw-small-gap-cert} are identical to that of Proposition~\ref{prop:blw-basic}, so we omit them for brevity.
\end{proof}
The next lemma is the A-BLCI counterpart of Lemma~\ref{lem:safe-two-pass}; it will be used to establish Fej\'er monotonicity of the iterates.
\begin{lemma}\label{lem:accelerated-safe-two-pass}
	Consider the following two-call accelerated safety check:
	\[
		(a_1,u_1,\iscert{}_1)
		\leftarrow
		\mathrm{A\text{-}BLCI}(\bar x,\mu,\Delta,m,a^{(0)}).
	\]
	If $\iscert{}_1=\texttt{False}$, run
	\[
		(a^+,y^+,\iscert{}_2)
		\leftarrow
		\mathrm{A\text{-}BLCI}(\bar x,\mu,4\Delta/5,m,a_1).
	\]
	If the safety check stops with a true certificate flag, then the original pair $(\bar x,\mu,\Delta)$ is certified. If both calls return false, then
	\[
		f(y^+)<f(\bar x)-\frac{3}{5}\Delta,
		\qquad
		\dist(y^+,X^*)\le \dist(\bar x,X^*).
	\]
\end{lemma}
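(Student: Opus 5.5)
The argument follows the proof of Lemma~\ref{lem:safe-two-pass}, with A-BLCI in place of BLCI. One new point needs care: the returned improved point is now an estimate-sequence point $\bar y_t$, which is not itself a projection point.

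\emph{Certified cases.} If the first call returns with a true flag, Item~\ref{item:ablw-cert-flag-gap} of Proposition~\ref{prop:ablw} gives the gap bound for $(\bar x,\mu,\Delta)$. If instead the second call returns true, the same item certifies $(\bar x,\mu,4\Delta/5)$. The right-hand side $\Delta\vee\mu\Delta/\mu^*$ is monotone in $\Delta$, so this also certifies $(\bar x,\mu,\Delta)$.

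\emph{Both calls false: objective decrease.} The second call has gap $4\Delta/5$, so its improvement exit gives
\[
f(y^+)<f(\bar x)-\tfrac45\cdot\tfrac45\Delta=f(\bar x)-\tfrac{16}{25}\Delta<f(\bar x)-\tfrac35\Delta.
\]

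\emph{Both calls false: distance bound.} The first false return yields $f^*\le f(u_1)<f(\bar x)-4\Delta/5=\ell_2$, where $\ell_2$ is the level of the second call. Every affine minorant $h$ in the second call (the warm start $a_1$, the cuts $\ell_f(\cdot;y_i)$, and all aggregates) therefore satisfies $h(x^*)\le f^*<\ell_2$ for every $x^*\in X^*$. Hence $X^*$ lies in each set $\{x\in X:a^{(t)}(x)\le\ell_2\}$. Lemma~\ref{lem:ablw-threepoint} with $\hat x=x^*$ then gives $\|x_t-x^*\|\le\|\bar x-x^*\|$ for every main iterate $x_t$ of the second call, including $x_0$ (the projection of $\bar x$ onto a set containing $x^*$).

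\emph{Passing from $x_t$ to $y^+$.} This is the main obstacle. The returned point is $y^+=\bar y_t$, and $\bar y_t=\alpha_tx_t+(1-\alpha_t)\bar y_{t-1}$ with $\bar y_0=x_0$. Unrolling this recursion shows that $\bar y_t$ is a convex combination of $x_0,x_1,\dots,x_t$. Fix the projection $x^*$ of $\bar x$ onto $X^*$. Each $x_j$ lies in the ball of radius $\|\bar x-x^*\|$ about $x^*$, and this ball is convex, so
\[
\dist(y^+,X^*)\le\|y^+-x^*\|\le\max_j\|x_j-x^*\|\le\|\bar x-x^*\|=\dist(\bar x,X^*).
\]

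It remains to check that every $x_j$ entering this combination was generated before termination, so that Lemma~\ref{lem:ablw-threepoint} applies to it. The improvement exit occurs after $x_t$ and its aggregate $a^{(t)}$ are formed, so all of $x_0,\dots,x_t$ qualify.
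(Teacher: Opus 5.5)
Your proposal is correct and follows essentially the same route as the paper. You certify via monotonicity of the gap bound in $\Delta$, use $f^*<\ell_2$ to place $X^*$ inside every projection set of the second call, and apply the three-point inequality to the main iterates $x_t$. You then pass to the returned point $\bar y_t$ using convexity of the ball about $x^*$; the paper phrases this last step as an induction on the recursion $\bar y_t=\alpha_t x_t+(1-\alpha_t)\bar y_{t-1}$, while you unroll it into a convex combination, which amounts to the same argument.
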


\begin{proof}
	The true-flag case is identical to Lemma~\ref{lem:safe-two-pass}: a true flag in the first call certifies the original gap, while a true flag in the second call certifies the smaller gap $4\Delta/5$, which also certifies the original gap $\Delta$ by monotonicity of~\eqref{eq:valid-gap}.

	Now suppose both calls return false. The first false return gives $f(u_1)<f(\bar x)-4\Delta/5$, and hence $f^*<f(\bar x)-4\Delta/5$. Therefore the second call uses the level $\ell_2=f(\bar x)-4\Delta/5>f^*$. Thus, every minimizer is feasible for every projection subproblem in the second call. Fix $x^*\in X^*$. Lemma~\ref{lem:ablw-threepoint} gives $\|x_t-x^*\|\le\|\bar x-x^*\|$ for every projection point $x_t$ generated in that call. Also, $\bar y_0=x_0$ has the same property. Since $\bar y_t=\alpha_t x_t+(1-\alpha_t)\bar y_{t-1}$ and Euclidean balls are convex, induction gives $\|\bar y_t-x^*\|\le\|\bar x-x^*\|$ for every generated estimate point $\bar y_t$. Since the second call returns false, $y^+=\bar y_t$ for some $t$, and taking the infimum over $x^*\in X^*$ yields the distance bound. The objective decrease follows from the false-return test in the second call and the inequality $(4/5)^2>3/5$.
\end{proof}

Having established the basic properties of the A-BLCI subroutine, we are ready to introduce our  uniformly optimal algorithm.
\subsection{Optimal complexity under H\"older smoothness and quadratic growth}\label{sec:QG_Holder}

Since Algorithms~\ref{alg:ablw} and~\ref{alg:blw} have the same input--output interface, A-BLCI can replace BLCI in Algorithm~\ref{alg:outer}. We refer to the resulting method as \emph{A-BLW}; its pseudocode is omitted because it differs from Algorithm~\ref{alg:outer} only in this subroutine substitution. We establish its uniform optimality for problems with  H\"older smoothness and quadratic growth. Recall that, up to a universal constant factor,  the oracle complexity of Algorithm~\ref{alg:ablw} with input $\Delta$, for a convex objective satisfying quadratic growth with modulus $\mu$ and the H\"older smoothness condition with exponent $\rho\in[0,1]$ and constant $M_\rho$,  is
\begin{equation}
	Q_\rho(\mu,\Delta)=\left(\frac{M_\rho^2}{\mu^{1+\rho}\Delta^{1-\rho}}\right)^{\frac{1}{1+3\rho}}
\end{equation}
Lemma~\ref{lem:accelerated-safe-two-pass} ensures both objective monotonicity and Fej\'er monotonicity of the accepted centers. Hence, all centers remain in the initial level set where Assumption~\ref{ass:qg} holds.
The next result is the accelerated counterpart of Proposition~\ref{prop:outer-step}. It shows that one outer iteration produces a valid certificate for the next center, reduces the product $\mu_k\Delta_k$ by a constant factor, and uses $O(Q_\rho(\mu_k,\Delta_k)+\log(\mu_{k-1}/\mu_k))$ oracle evaluations.

\begin{proposition}[One outer iteration with A-BLW]\label{prop:outer-ablw}
	Suppose that Assumptions~\ref{ass:qg} and~\ref{ass:holder-smooth-ablw} hold. Suppose that, at the beginning of the $k$th outer iteration of A-BLW, the current center $\bar x_{k-1}$ admits a valid certificate with parameters $(\mu_{k-1},\Delta_{k-1})$ and $\mu_{k-1}\ge \mu^*/4$. Then the output parameters satisfy
	\begin{equation}\label{eq:ablw-product-contract}
		\mu_k\Delta_k=\frac34\mu_{k-1}\Delta_{k-1},
	\end{equation}
	and the returned center $\bar x_k$ admits a valid certificate with parameters $(\mu_k,\Delta_k)$ satisfying $\mu_k\ge \mu^*/4$. Moreover,
	\[
		\dist(\bar x_k,X^*)\le \dist(\bar x_{k-1},X^*).
	\]
	If $N_k$ denotes the number of first-order oracle evaluations used in this outer iteration, then
	\begin{equation}\label{eq:ablw-outer-cost}
		N_k\le 2900Q_\rho(\mu_k,\Delta_k)+10+2\log_2\left(\frac{\mu_{k-1}}{\mu_k}\right).
	\end{equation}
\end{proposition}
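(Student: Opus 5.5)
The plan is to follow the proof of Proposition~\ref{prop:outer-step} line by line, with each BLCI ingredient replaced by its A-BLCI counterpart. Proposition~\ref{prop:ablw} supplies the termination bound, the certified gap, the small-gap certification and the slowness progress. Lemma~\ref{lem:accelerated-safe-two-pass} supplies the two-call safety check. First, Lemma~\ref{lem:accelerated-safe-two-pass} makes the accepted centers objective-monotone, so every A-BLCI call is centered in the initial level set and Proposition~\ref{prop:ablw} applies to all of them.

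The certification, product-contraction and $\mu_k\ge\mu^*/4$ claims carry over verbatim from Proposition~\ref{prop:outer-step}. If either safety call returns true, the pair $(\hat\mu_k,\hat\Delta_k)$ is certified. Otherwise the center moves, and each failed embedded call replaces $(\mu,\Delta)$ by $(\mu/2,2\Delta)$, so the product is preserved. The lower bound on $\mu_k$ uses two facts: the decrease $f(\bar x_k)<f(\bar x_{k-1})-\tfrac35\hat\Delta_k$, and Item~\ref{item:ablw-small-gap-cert} applied at the first $j$ with $2^j\hat\Delta_k\ge\tfrac54(f(\bar x_k)-f^*)$. This is split into the cases $\mu_{k-1}\le\mu^*$ and $\mu_{k-1}>\mu^*$ exactly as before. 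Fejér monotonicity is the distance part of Lemma~\ref{lem:accelerated-safe-two-pass}, since the embedded search never moves the center.

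The new work is the oracle count. The safety check costs at most $400Q_\rho(\hat\mu_k,\hat\Delta_k)+4+400Q_\rho(\hat\mu_k,4\hat\Delta_k/5)+4$. Since $Q_\rho(\mu,\Delta)$ scales like $\Delta^{-(1-\rho)/(1+3\rho)}$ and that exponent lies in $[0,1]$, we get $Q_\rho(\hat\mu_k,4\hat\Delta_k/5)\le\tfrac54 Q_\rho(\hat\mu_k,\hat\Delta_k)$. So the safety check costs at most $900Q_\rho(\mu_k,\Delta_k)+8$, using $(\hat\mu_k,\hat\Delta_k)$ in place of the output parameters; this identification is discussed at the end.

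For the embedded search, let $n_r=(N_{\mathrm{ter},r}-1)_+$. Each call on the center $\bar x_k$ uses at most $2n_r+4$ evaluations, because every iteration queries both $y_{t-1}$ and $\bar y_t$. The key difficulty is that the progress term in~\eqref{eq:ablw-slowness-progress} depends on $\Delta$ through $\Delta^{-2\rho/(1+\rho)}$, and $\Delta$ doubles after each failed call. So the progress of different calls is not additive in iteration counts, as it was in the nonsmooth case. My plan is to measure everything at the final gap $\Delta_{k,\bar r-1}=\Delta_k$. Lemma~\ref{lem:monotonicity} carries squared slowness forward across warm starts, and at the earlier, smaller gaps $\Delta_{k,r-1}\le\Delta_k$ the progress factor $\Delta^{-2\rho/(1+\rho)}$ is only larger. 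Combining this with superadditivity of $t\mapsto t^{(1+3\rho)/(1+\rho)}$ gives
\[
\frac1{100}\Bigl(\frac{(\sum_r n_r)^{1+3\rho}}{M_\rho^2\Delta_k^{2\rho}}\Bigr)^{\frac1{1+\rho}}<\frac{2}{\mu_{k,\bar r-1}\Delta_k}.
\]
The right-hand side is the exit threshold of the final call. Inverting this yields $\sum_r n_r\le 200^{(1+\rho)/(1+3\rho)}Q_\rho(\mu_k,\Delta_k)\le 200Q_\rho(\mu_k,\Delta_k)$. Hence the embedded search costs at most $400Q_\rho(\mu_k,\Delta_k)+4\bar r$, where $\bar r=1+\log_2(\mu_{k-1}/\mu_k)$.

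The step I expect to be the main obstacle is fitting the constants $2900$, $10$ and $2\log_2(\mu_{k-1}/\mu_k)$. The naive count gives $4\bar r$ and a total of about $1300Q+12$. The generous constant $2900$ suggests the authors absorb the excess per-call overhead into the $Q_\rho$ term. They may also bound each call by $2n_r+2$ plus a single extra evaluation, using that the initial test and the first query come before any progress. I would first tighten the per-call count, since the oracle at $y_0=x_0$ is queried only once per call. If the $\log$ coefficient still does not reduce to $2$, I would pay for the surplus using $\log_2(\mu_{k-1}/\mu_k)\lesssim$ the number of failed calls, each of which costs at least one non-terminal iteration. There is also a subtlety in identifying $Q_\rho(\hat\mu_k,\hat\Delta_k)$ with $Q_\rho(\mu_k,\Delta_k)$, because $Q_\rho$ is not a function of the product alone. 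However, $Q_\rho(\mu/2,2\Delta)=2^{2\rho/(1+3\rho)}Q_\rho(\mu,\Delta)\ge Q_\rho(\mu,\Delta)$, so the safety-check bound at $(\hat\mu_k,\hat\Delta_k)$ is dominated by $Q_\rho(\mu_k,\Delta_k)$. This monotonicity is what lets the final bound be stated at the output parameters.
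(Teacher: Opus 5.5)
Your reduction of the certification, product-contraction, $\mu_k\ge\mu^*/4$ and Fej\'er claims is fine and matches the paper. So is the safety-check bound $900Q_\rho(\mu_k,\Delta_k)+8$, including your observation $Q_\rho(\mu/2,2\Delta)\ge Q_\rho(\mu,\Delta)$.

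\textbf{The gap is in the embedded-search count.} After lower-bounding every call's progress at the final gap $\Delta_k$, what you actually have is
\[
\sum_{r=1}^{\bar r} n_r^{p} < B,\qquad p:=\frac{1+3\rho}{1+\rho}\ge 1,\qquad B:=\frac{200M_\rho^{2/(1+\rho)}\Delta_k^{2\rho/(1+\rho)}}{\mu_k\Delta_k}.
\]
Superadditivity of $t\mapsto t^p$ says $\sum_r n_r^p\le(\sum_r n_r)^p$. That is the opposite of what you need, so it does not give $(\sum_r n_r)^p<B$. For $\rho>0$ the implication is false: $\bar r$ equal values $n$ give $\sum_r n_r^p=\bar r n^p$ but $(\sum_r n_r)^p=\bar r^{p}n^p$.

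From $\sum_r n_r^p<B$ alone, H\"older only yields $\sum_r n_r\le \bar r^{2\rho/(1+3\rho)}B^{1/p}$. Since $\bar r=1+\log_2(\mu_{k-1}/\mu_k)$ is not bounded by a constant, the $Q_\rho$ term would pick up a multiplicative factor such as $\sqrt{\bar r}$ at $\rho=1$, and \eqref{eq:ablw-outer-cost} would not follow. Your constant $1300$ is an artifact of this invalid step.

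\textbf{The missing idea: keep the weights from the doubling gaps.} The factor you call ``only larger'' at earlier gaps is exactly what must be retained. Index the calls backward, $a_i:=n_{\bar r-i}$. The $i$-th call from the end used gap $2^{-i}\Delta_k$, so its progress carries the extra factor $2^{i\cdot 2\rho/(1+\rho)}$. Telescoping via Lemma~\ref{lem:monotonicity} then gives
\[
\sum_{i=0}^{\bar r-1}2^{i\frac{2\rho}{1+\rho}}a_i^{p}<B.
\]
This yields both $\sum_i a_i^p\le B$ and the geometric cap $a_i^p\le 2^{-i\frac{2\rho}{1+\rho}}B$. The paper then applies the geometrically capped moment bound, Lemma~\ref{lem:moment}. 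It uses H\"older on the first $\approx\tau^{-1}\log_2(1/\tau)$ terms, with $\tau=2\rho/(1+3\rho)$, and sums the geometric caps on the tail. The result is $\sum_i a_i\le 5B^{1/p}\le 1000Q_\rho(\mu_k,\Delta_k)$.

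\textbf{On the constants.} Each call costs at most $2n_r+2$ oracle evaluations: there are at most $n_r+1$ iterations, each with at most two queries. This removes your $4\bar r$ worry. Your fallback that every failed call has a non-terminal iteration is also false, since a call can fail at $t=1$ with $n_r=0$. The embedded search therefore costs at most $2000Q_\rho(\mu_k,\Delta_k)+2\bar r$. Adding the safety overhead gives exactly $2900Q_\rho(\mu_k,\Delta_k)+10+2\log_2(\mu_{k-1}/\mu_k)$.
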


\begin{proof}
	The product identity, certificate validity, lower bound $\mu_k\ge \mu^*/4$, and Fej\'er monotonicity follow exactly as in Proposition~\ref{prop:outer-step}, using Lemma~\ref{lem:accelerated-safe-two-pass} in place of Lemma~\ref{lem:safe-two-pass}.

	It remains only to count oracle calls. During the embedded search, the product governing the descent-slowness threshold remains fixed at $\mu_k\Delta_k$. Moreover, within the embedded search the modulus estimate can only decrease, so the quantity $Q_\rho(\mu,\Delta)$ for any earlier call is no larger than $Q_\rho(\mu_k,\Delta_k)$ for the final certified pair. By Item~\ref{item:ablw-termination} of Proposition~\ref{prop:ablw}, one A-BLCI call with parameters $(\mu,\Delta)$ uses at most $400Q_\rho(\mu,\Delta)+4$ oracle evaluations for every $\rho\in[0,1]$. The initial two-pass safety check consists of one call at $(\hat\mu_k,\hat\Delta_k)$ and, only if the first call returns false, one additional call at $(\hat\mu_k,4\hat\Delta_k/5)$. Since $Q_\rho(\hat\mu_k,4\hat\Delta_k/5)\le (5/4)Q_\rho(\mu_k,\Delta_k)$, the safety check costs at most
	\[
		400Q_\rho(\mu_k,\Delta_k)+4+500Q_\rho(\mu_k,\Delta_k)+4.
	\]

	If the initial two-pass safety check returns with a true certificate flag, then \eqref{eq:ablw-outer-cost} follows immediately. Hence suppose that the embedded $\mu$-search is entered. Let $\bar r$ denote the first embedded index whose A-BLCI call returns with a true certificate flag, so that
	\[
		\Delta_{k,r-1}=2^{r-1}\widehat\Delta_k,
		\qquad
		\mu_{k,r-1}=2^{-(r-1)}\widehat\mu_k,
		\qquad r=1,\ldots,\bar r,
	\]
	and $\Delta_k=\Delta_{k,\bar r-1}$, $\mu_k=\mu_{k,\bar r-1}$. Thus
	\[
		\bar r=1+\log_2\left(\frac{\mu_{k-1}}{\mu_k}\right).
	\]
	Let $N_r$ be the last inner iteration of the $r$th embedded A-BLCI call, with $N_r=0$ if it exits in its initial test, and set $n_r:=(N_r-1)_+$. Thus $n_r$ counts only the main iterates generated before the terminal step. Since each inner iteration makes at most two oracle queries, this call uses at most $2n_r+2$ oracle evaluations. We claim the following bound for the embedded search:
	\begin{equation}\label{eq:embedded-ablw-count}
		\sum_{r=1}^{\bar r}n_r\le 1000Q_\rho(\mu_k,\Delta_k),\qquad 0\le \rho\le1.
	\end{equation}
	Indeed, Item~\ref{item:ablw-slowness-progress} of Proposition~\ref{prop:ablw} gives, for every false-flag call and through the last pre-exit aggregate of the final true-flag call, a descent-slowness-square increase of at least
	\[
		\frac{1}{100}
		\left(\frac{n_r^{1+3\rho}}{M_\rho^2\Delta_{k,r-1}^{2\rho}}\right)^{\frac{1}{1+\rho}}.
	\]
	The affine minorant is warm-started across false-flag calls, the gap doubles, and the product remains $\mu_k\Delta_k$. Hence Lemma~\ref{lem:monotonicity} telescopes these increases. As in the nonaccelerated proof, the reference aggregate has already failed the slowness test and therefore has descent-slowness square below $2/(\mu_k\Delta_k)$. If the final call exits before producing a pre-exit aggregate, then $n_{\bar r}=0$ and we use the preceding false-flag output at its own gap; when $\bar r=1$, the sum is zero. We obtain
	\[
		\sum_{r=1}^{\bar r}\left(\frac{n_r^{1+3\rho}}{M_\rho^2\Delta_{k,r-1}^{2\rho}}\right)^{\frac{1}{1+\rho}}
		< \frac{200}{\mu_k\Delta_k}.
	\]
	To convert this moment bound into a bound on the total number of inner iterations, index the stages backward from the final certified call. Set $a_i:=n_{\bar r-i}$, $i=0,\ldots,\bar r-1$. Since $\Delta_{k,\bar r-i-1}=2^{-i}\Delta_k$, the moment bound first gives
	\[
		\sum_{i=0}^{\bar r-1}2^{i\frac{2\rho}{1+\rho}}
		a_i^{\frac{1+3\rho}{1+\rho}}<B,
		\qquad
		B:=\frac{200M_\rho^{2/(1+\rho)}\Delta_k^{2\rho/(1+\rho)}}{\mu_k\Delta_k}.
	\]
	In particular,
	\[
		\sum_{i=0}^{\bar r-1}a_i^{\frac{1+3\rho}{1+\rho}}\le B,
		\qquad
		a_i^{\frac{1+3\rho}{1+\rho}}\le 2^{-i\frac{2\rho}{1+\rho}}B.
	\]
	Applying Lemma~\ref{lem:moment} then gives
	\[
		\sum_{r=1}^{\bar r}n_r=\sum_{i=0}^{\bar r-1}a_i\le 5B^{\frac{1+\rho}{1+3\rho}}\le 1000Q_\rho(\mu_k,\Delta_k).
	\]
	Thus the embedded search uses at most
	$2000Q_\rho(\mu_k,\Delta_k)+2\bar r$ oracle evaluations. Combining this with the safety-check overhead of at most $900Q_\rho(\mu_k,\Delta_k)+8$ and $\bar r=1+\log_2(\mu_{k-1}/\mu_k)$ yields \eqref{eq:ablw-outer-cost}.
\end{proof}
Now we present the complexity guarantee of A-BLW.
\begin{theorem}[Optimal complexity of A-BLW]\label{thm:ablw-complexity}
	Suppose that Assumptions~\ref{ass:qg} and~\ref{ass:holder-smooth-ablw} hold. Let $\mu_0\ge\mu^*$, and let $(\bar x_k,\mu_k,\Delta_k)_{k\ge0}$ be generated by A-BLW. For any $\epsilon>0$, define
	\begin{equation}\label{eq:ablw-sepsilon}
		S_\epsilon:=\max\left\{0,\left\lceil \log_{4/3}\left(\frac{4\mu_0\Delta_0}{\mu^*\epsilon}\right)\right\rceil\right\}
		\le \max\left\{0,\left\lceil \log_{4/3}\left(\frac{8\norm{f'(\bar x_0)}^2}{\mu^*\epsilon}\right)\right\rceil\right\}.
	\end{equation}
	Then $f(\bar x_{S_\epsilon})-f^*\le \epsilon$. The center sequence is Fej\'er monotone with respect to $X^*$:
	\[
		\dist(\bar x_k,X^*)\le \dist(\bar x_{k-1},X^*),\qquad k\ge1.
	\]
	Moreover, if $N_\epsilon$ denotes the total number of first-order oracle evaluations required to complete the $S_\epsilon$th outer iteration, then there is a universal constant $C$ such that
	\begin{equation}\label{eq:ablw-total-complexity}
		N_\epsilon\le C\min\left\{\frac{1}{1-\rho},S_\epsilon\right\}
		\left(\frac{M_\rho^2}{(\mu^*)^{1+\rho}\epsilon^{1-\rho}}\right)^{\frac{1}{1+3\rho}}
		+10S_\epsilon+2\log_2\left(\frac{4\mu_0}{\mu^*}\right)+1.
	\end{equation}
	At the endpoint $\rho=1$, the minimum is interpreted as $S_\epsilon$.
\end{theorem}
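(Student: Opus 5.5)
The argument runs parallel to the proof of Theorem~\ref{thm:anytime-complexity}; only the summation of the per-iteration costs changes. First I verify the base case exactly as before. Convexity and Assumption~\ref{ass:qg} give $f(\bar x_0)-f^*\le 2\|f'(\bar x_0)\|^2/\mu^*=(\mu_0/\mu^*)\Delta_0$, so $\bar x_0$ is certified with parameters $(\mu_0,\Delta_0)$, and $\mu_0\ge\mu^*\ge\mu^*/4$. Applying Proposition~\ref{prop:outer-ablw} inductively then gives:
\begin{itemize}
\item $\mu_k\Delta_k=(3/4)^k\mu_0\Delta_0$;
\item $\mu_k\ge\mu^*/4$;
\item $\bar x_k$ is certified with parameters $(\mu_k,\Delta_k)$;
\item $\dist(\bar x_k,X^*)\le\dist(\bar x_{k-1},X^*)$.
\end{itemize}
Corollary~\ref{cor:valid-gap} yields $f(\bar x_k)-f^*\le\max\{1,\mu_k/\mu^*\}\Delta_k\le 4\mu_k\Delta_k/\mu^*$. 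By the choice of $S_\epsilon$ we have $\mu_{S_\epsilon}\Delta_{S_\epsilon}\le\mu^*\epsilon/4$, hence $\epsilon$-optimality. The bound $\mu_0\Delta_0=2\|f'(\bar x_0)\|^2$ gives the second inequality in \eqref{eq:ablw-sepsilon}.

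For the oracle count, I sum \eqref{eq:ablw-outer-cost} over $k=1,\dots,S_\epsilon$ and add the initial query. The logarithmic terms telescope to $2\log_2(\mu_0/\mu_{S_\epsilon})\le 2\log_2(4\mu_0/\mu^*)$, and the constants contribute $10S_\epsilon$. It remains to bound $\sum_k Q_\rho(\mu_k,\Delta_k)$. I rewrite
\[
Q_\rho(\mu,\Delta)=\left(\frac{M_\rho^2}{\mu^{2\rho}(\mu\Delta)^{1-\rho}}\right)^{\frac{1}{1+3\rho}}.
\]
Using $\mu_k\ge\mu^*/4$, this is at most $4^{2\rho/(1+3\rho)}\bigl(M_\rho^2/((\mu^*)^{2\rho}(\mu_k\Delta_k)^{1-\rho})\bigr)^{1/(1+3\rho)}$. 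From the definition of $S_\epsilon$, exactly as before, $\mu_{S_\epsilon}\Delta_{S_\epsilon}>3\mu^*\epsilon/16$. Since $\mu_k\Delta_k=(4/3)^{S_\epsilon-k}\mu_{S_\epsilon}\Delta_{S_\epsilon}$, each term satisfies
\[
Q_\rho(\mu_k,\Delta_k)\le C_0\,q^{S_\epsilon-k}\left(\frac{M_\rho^2}{(\mu^*)^{1+\rho}\epsilon^{1-\rho}}\right)^{\frac{1}{1+3\rho}},
\qquad q:=(3/4)^{\frac{1-\rho}{1+3\rho}},
\]
where $C_0$ is a universal constant, using $(16/3)^{(1-\rho)/(1+3\rho)}\le 16/3$.

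The step needing most care is the geometric sum $\sum_{j=0}^{S_\epsilon-1}q^j\le\min\{S_\epsilon,1/(1-q)\}$. I will show $1-q\ge c(1-\rho)$ for a universal $c>0$. Indeed, $1-q=1-e^{-\theta}$ with $\theta=\frac{1-\rho}{1+3\rho}\ln(4/3)\in[(1-\rho)\ln(4/3)/4,\ln(4/3)]$. Using $1-e^{-\theta}\ge\theta e^{-\theta}\ge\tfrac34\theta$ gives $1/(1-q)\le 16/(3\ln(4/3)(1-\rho))$. When $\rho=1$ we have $q=1$ and the sum equals $S_\epsilon$, matching the stated convention. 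Collecting terms, including the factor $2900$ from \eqref{eq:ablw-outer-cost}, gives \eqref{eq:ablw-total-complexity} with a universal $C$.
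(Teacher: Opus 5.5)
Your proposal is correct and follows essentially the same route as the paper's proof. You share the base case, the inductive use of Proposition~\ref{prop:outer-ablw}, the telescoping of the logarithmic terms, and the per-term bound $Q_\rho(\mu_k,\Delta_k)\le C_0\,q^{S_\epsilon-k}Q_\rho(\mu^*,\epsilon)$ obtained from $\mu_k\ge\mu^*/4$ and $\mu_{S_\epsilon}\Delta_{S_\epsilon}>3\mu^*\epsilon/16$. The only difference is that you explicitly justify $1/(1-q)=O(1/(1-\rho))$ via $1-e^{-\theta}\ge\tfrac34\theta$, whereas the paper simply asserts this geometric bound with constant $16$.
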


\begin{proof}
	Since the outer logic is unchanged and Proposition~\ref{prop:outer-ablw} provides the same product contraction and modulus lower bound as in the nonaccelerated case, the proof of Theorem~\ref{thm:anytime-complexity} applies verbatim to show $f(\bar x_{S_\epsilon})-f^*\le \epsilon$ and Fej\'er monotonicity. If $S_\epsilon=0$, only the initial oracle evaluation at $\bar x_0$ is needed, so the complexity bound follows immediately. Hence assume $S_\epsilon\ge1$. The same product bookkeeping used in the proof of Theorem~\ref{thm:anytime-complexity} then gives
	\[
		\mu_{S_\epsilon}\Delta_{S_\epsilon}>\frac{3}{16}\mu^*\epsilon.
	\]
	It remains to bound the number of oracle evaluations.

	Including the initial oracle evaluation at $\bar x_0$ and applying Proposition~\ref{prop:outer-ablw}, we obtain
	\begin{equation}\label{eq:ablw-oracle-sum}
		N_\epsilon\le 2900\sum_{k=1}^{S_\epsilon}Q_\rho(\mu_k,\Delta_k)+10S_\epsilon+2\sum_{k=1}^{S_\epsilon}\log_2\left(\frac{\mu_{k-1}}{\mu_k}\right)+1.
	\end{equation}
	The logarithmic sum telescopes:
	\[
		\sum_{k=1}^{S_\epsilon}\log_2\left(\frac{\mu_{k-1}}{\mu_k}\right)
		=\log_2\left(\frac{\mu_0}{\mu_{S_\epsilon}}\right)
		\le \log_2\left(\frac{4\mu_0}{\mu^*}\right),
	\]
	where the last inequality uses $\mu_{S_\epsilon}\ge \mu^*/4$.

	It remains to estimate the main sum. For each $1\le k\le S_\epsilon$, using $\mu_k\ge\mu^*/4$, $\mu_k\Delta_k=(3/4)^k\mu_0\Delta_0$, and the lower bound on $\mu_{S_\epsilon}\Delta_{S_\epsilon}$ above, we have
	\[
		\begin{aligned}
			Q_\rho(\mu_k,\Delta_k)
			 & =\left(\frac{M_\rho^2}{\mu_k^{2\rho}(\mu_k\Delta_k)^{1-\rho}}\right)^{\frac{1}{1+3\rho}}              \\
			 & \le 4^{\frac{2\rho}{1+3\rho}}\left(\frac{16}{3}\right)^{\frac{1-\rho}{1+3\rho}}Q_\rho(\mu^*,\epsilon)
			\left(\left(\frac34\right)^{\frac{1-\rho}{1+3\rho}}\right)^{S_\epsilon-k}                                \\
			 & \le \frac{16}{3}Q_\rho(\mu^*,\epsilon)
			\left(\left(\frac34\right)^{\frac{1-\rho}{1+3\rho}}\right)^{S_\epsilon-k}.
		\end{aligned}
	\]
	Consequently, the elementary geometric bound
	\[
		\sum_{k=1}^{S_\epsilon}\left(\left(\frac34\right)^{\frac{1-\rho}{1+3\rho}}\right)^{S_\epsilon-k}
		\le \min\left\{\frac{16}{1-\rho},S_\epsilon\right\}
	\]
	implies
	\[
		\sum_{k=1}^{S_\epsilon}Q_\rho(\mu_k,\Delta_k)
		\le \frac{256}{3}\min\left\{\frac{1}{1-\rho},S_\epsilon\right\}Q_\rho(\mu^*,\epsilon).
	\]
	Substituting this estimate and the telescoping logarithmic bound into \eqref{eq:ablw-oracle-sum}, and absorbing numerical factors into the universal constant $C$, gives \eqref{eq:ablw-total-complexity}.
\end{proof}

\begin{remark}
	The complexity bound on $N_\epsilon$ shows that the proposed A-BLW method attains a universally optimal rate without requiring prior knowledge of the true QG modulus $\mu^*$. Specifically, when $\rho=0$, the leading term reduces to $O(M_0^2/(\mu^*\epsilon))$. For any fixed $\rho\in(0,1)$, since $\min\{1/(1-\rho),S_\epsilon\} \le \frac{1}{1-\rho}$, the leading complexity bound becomes
	\[
		O\left(\left(\frac{M_\rho^2}{(\mu^*)^{1+\rho}\epsilon^{1-\rho}}\right)^{\frac{1}{1+3\rho}}\right),
	\]
	matching the corresponding lower bounds \cite{doikov2025lower}. As $\rho \rightarrow 1$, $\min\{1/(1-\rho),S_\epsilon\}\le S_\epsilon=O(\log(1/\epsilon))$. Consequently, at the smooth endpoint $\rho=1$, the bound becomes the accelerated smooth linear rate $O(\sqrt{M_1/\mu^*}\log(1/\epsilon))$.
\end{remark}

\subsection{Universally optimal rate of A-BLW}
\label{subsec:other-growth-conditions}

Recall that $f$ satisfies the $\alpha$-H\"older growth condition on the
initial level set if
\[
	f(x)-f^*
	\ge
	\mu_\alpha \dist^\alpha(x,X^*),
	\qquad
	\text{for all }x\in \{z\in X:f(z)\le f(\bar x_0)\},
\]
where $\bar x_0$ is the initialization. In this subsection, we show that
A-BLW is adaptive to both the smoothness and growth regimes. Namely, for every
H\"older smoothness setting considered here, A-BLW attains the corresponding
optimal rate under either $\alpha$-H\"older growth, for any $\alpha\ge2$, or
under no growth assumption beyond convexity. No smoothness parameter, growth
exponent, or growth constant is assumed to be known. The key step is to reduce
each case to a suitable quadratic-growth estimate along the generated
trajectory and then invoke the results of Section~\ref{sec:QG_Holder}.

We begin with a trajectory-level version of quadratic growth. The proof of
Theorem~\ref{thm:ablw-complexity} uses quadratic growth only at the generated
centers $\{\bar x_k\}$. Hence, for a fixed $K$, the relevant modulus is the
sequential quadratic-growth modulus
\begin{equation}\label{eq:sequential-qg-modulus}
	\begin{aligned}
		\mu_K^{\rm seq}
		 & :=
		\sup\left\{
		\mu\ge0:
		f(\bar x_j)-f^*
		\ge
		\frac{\mu}{2}\dist^2(\bar x_j,X^*)
		\quad \forall\,0\le j\le K
		\right\} \\
		 & =
		\inf_{0\le j\le K}
		\frac{2(f(\bar x_j)-f^*)}{\dist^2(\bar x_j,X^*)},
	\end{aligned}
\end{equation}
where the ratio is interpreted as $+\infty$ at optimal points.

This observation allows Theorem~\ref{thm:ablw-complexity} to be applied with a
pre-stopping modulus. Fix $\epsilon>0$, and let
\[
	\tau_\epsilon:=\inf\{k\ge0:f(\bar x_k)-f^*\le\epsilon\}
\]
be the first outer iteration at which an $\epsilon$-optimal center is obtained,
with the convention that $\tau_\epsilon=+\infty$ if no such iteration exists.
If $\tau_\epsilon=0$, there is nothing to prove. Thus assume
$\tau_\epsilon>0$. Define
\[
	\widehat\mu_\epsilon
	:=
	\inf_{0\le j<\tau_\epsilon}
	\frac{2(f(\bar x_j)-f^*)}{\dist^2(\bar x_j,X^*)},
\]
again interpreting the ratio as $+\infty$ at optimal points. When
$\tau_\epsilon<+\infty$, this is exactly
$\mu_{\tau_\epsilon-1}^{\rm seq}$.

We next verify that the initialization in~\eqref{eq:mu0-def} gives
$\widehat\mu_\epsilon\le\mu_0$. Since
$\tau_\epsilon>0$, we have $\bar x_0\notin X^*$. By convexity,
\[
	f(\bar x_0)-f^*
	\le
	\inner{f'(\bar x_0)}{\bar x_0-x^*}
	\le
	\norm{f'(\bar x_0)}\dist(\bar x_0,X^*),
\]
where $x^*\in X^*$ is a projection of $\bar x_0$ onto $X^*$. Therefore
\[
	\begin{aligned}
		\widehat\mu_\epsilon
		 & \le
		\mu_0^{\rm seq}
		\le
		\frac{2\norm{f'(\bar x_0)}^2}{f(\bar x_0)-f^*} \\
		 & \le
		\frac{
			2\max\{\norm{f'(\hat x_1)}^2,\norm{f'(\hat x_2)}^2\}
		}{
			|f(\hat x_1)-f(\hat x_2)|
		}.
	\end{aligned}
\]
By~\eqref{eq:mu0-def}, the last expression is exactly $\mu_0$, and hence
$\widehat\mu_\epsilon\le\mu_0$. Let $\widehat S_\epsilon$ be the quantity
$S_\epsilon$ from
Theorem~\ref{thm:ablw-complexity} with $\mu^*$ replaced by
$\widehat\mu_\epsilon$. If $\widehat\mu_\epsilon>0$, the stopping-time
argument below gives $\tau_\epsilon\le\widehat S_\epsilon$ and bounds the
oracle calls needed to find an $\epsilon$-optimal center by the estimate in
Theorem~\ref{thm:ablw-complexity} with $\mu^*$ replaced by
$\widehat\mu_\epsilon$.

Indeed, suppose the method had not reached accuracy $\epsilon$ by iteration
$\widehat S_\epsilon$. Then $\widehat S_\epsilon<\tau_\epsilon$, so all
centers $\bar x_0,\ldots,\bar x_{\widehat S_\epsilon}$ are included in the
definition of $\widehat\mu_\epsilon$. These centers therefore satisfy the
quadratic-growth inequality with modulus $\widehat\mu_\epsilon$. Since the proof of
Theorem~\ref{thm:ablw-complexity} uses quadratic growth only at these centers,
the same proof applies with $\mu^*$ replaced by $\widehat\mu_\epsilon$, and
yields
\[
	f(\bar x_{\widehat S_\epsilon})-f^*\le\epsilon,
\]
a contradiction.

\paragraph{The convex setting.}
Now suppose that no growth condition is imposed and $X^*\ne\emptyset$. Define
$R_0:=\dist(\bar x_0,X^*)$. If $R_0=0$, then $\bar x_0$ is already optimal. Thus
assume $R_0>0$ and $\tau_\epsilon>0$. By the definition of $\tau_\epsilon$
and the Fej\'er monotonicity in Theorem~\ref{thm:ablw-complexity}, for every
$0\le j<\tau_\epsilon$,
\[
	f(\bar x_j)-f^*>\epsilon,
	\qquad
	\dist(\bar x_j,X^*)\le R_0.
\]
Therefore, the stopping-time modulus satisfies
\[
	\widehat\mu_\epsilon
	\ge
	\frac{2\epsilon}{R_0^2}.
\]
Substituting this bound into the stopping-time estimate above gives the leading
complexity bound
\[
	N_\epsilon
	=
	O\left(
	\min\left\{
	\frac{1}{1-\rho},
	\log_{4/3}\left(\frac{\mu_0\Delta_0R_0^2}{\epsilon^2}\right)
	\right\}
	\left(
	\frac{M_\rho^2R_0^{2(1+\rho)}}{\epsilon^2}
	\right)^{\frac{1}{1+3\rho}}
	\right).
\]
For every fixed $\rho<1$, this is the optimal H\"older-smooth convex rate. At
the smooth endpoint $\rho=1$, the direct use of
Theorem~\ref{thm:ablw-complexity} gives an extra multiplicative logarithmic
factor if the per-outer-iteration cost $\sqrt{M_1/\mu_k}$ is bounded uniformly
by its worst value $\sqrt{M_1/\hat\mu_\epsilon}$. This logarithm is an artifact of that crude summation. To
remove it,  we instead sum up a tighter  sequence of geometric upper bounds. Corollary~\ref{cor:convex-mu-lower-bound}
shows that
\[
	\mu_k\ge\frac{\sqrt{\mu_k\Delta_k}}{\sqrt5\,R_0}.
\]
Hence
\[
	Q_1(\mu_k,\Delta_k)
	=
	\sqrt{\frac{M_1}{\mu_k}}
	\le
	5^{1/4}\sqrt{M_1}\,R_0^{1/2}(\mu_k\Delta_k)^{-1/4}.
\]
Using the product identity $\mu_k\Delta_k=(3/4)^k\mu_0\Delta_0$ and summing
through the stopping time gives
\[
	\sum_{k=1}^{\tau_\epsilon} Q_1(\mu_k,\Delta_k)
	\le
	C\sqrt{M_1}\,R_0^{1/2}
	(\mu_{\tau_\epsilon}\Delta_{\tau_\epsilon})^{-1/4}.
\]
Moreover, $\tau_\epsilon\le\widehat S_\epsilon$ and the definition of
$\widehat S_\epsilon$ give
\[
	\mu_{\tau_\epsilon}\Delta_{\tau_\epsilon}
	>
	\frac{3}{16}\widehat\mu_\epsilon\epsilon
	\ge
	\frac{3\epsilon^2}{8R_0^2}.
\]
Substituting this lower bound into the geometric-envelope sum gives the leading
smooth convex rate
\[
	N_\epsilon
	=
	O\left(\frac{\sqrt{M_1}\,R_0}{\sqrt{\epsilon}}\right).
\]

\paragraph{Convex setting with H\"older growth.}
Finally, we consider convex objectives satisfying an $\alpha$-H\"older growth condition for $\alpha \ge 2$.
Assume again that
$\tau_\epsilon>0$. For every $0\le j<\tau_\epsilon$, with
\[
	\delta_j:=f(\bar x_j)-f^*,
	\qquad
	d_j:=\dist(\bar x_j,X^*),
\]
we have $\delta_j>\epsilon$. By the objective monotonicity of the centers and the growth condition, we have
\[
	d_j^2
	\le
	\left(\frac{\delta_j}{\mu_\alpha}\right)^{2/\alpha}.
\]
Therefore
\[
	\frac{2\delta_j}{d_j^2}
	\ge
	2\mu_\alpha^{2/\alpha}\delta_j^{1-2/\alpha}
	\ge
	2\mu_\alpha^{2/\alpha}\epsilon^{(\alpha-2)/\alpha},
\]
and hence
\[
	\widehat\mu_\epsilon\ge
	2\mu_\alpha^{2/\alpha}\epsilon^{(\alpha-2)/\alpha}.
\]
Using the stopping-time estimate above gives
\[
	N_\epsilon
	=
	O\left(
	\min\left\{
	\frac{1}{1-\rho},
	\log_{4/3}\left(
		\frac{\mu_0\Delta_0}{\mu_\alpha^{2/\alpha}
			\epsilon^{2(\alpha-1)/\alpha}}
		\right)
	\right\}
	\left(
		\frac{M_\rho^2}{
			\mu_\alpha^{2(1+\rho)/\alpha}
			\epsilon^{2(\alpha-1-\rho)/\alpha}}
		\right)^{\frac{1}{1+3\rho}}
	\right).
\]
This recovers the growth-lifting rate of~\cite{grimmer2023holder}, while the
algorithm itself does not require $\mu_\alpha$, $\alpha$, $\rho$, $M_\rho$, or
$\epsilon$ as inputs. When $\rho=1$ and $\alpha>2$, the logarithmic factor in
the direct bound is again removable by summing the corresponding geometric
upper bounds. Corollary~\ref{cor:higher-order-mu-lower-bound} shows that
\[
	\mu_k\ge
	c_\alpha\,\mu_\alpha^{1/(\alpha-1)}
	(\mu_k\Delta_k)^{\frac{\alpha-2}{2(\alpha-1)}}
\]
for an explicit constant $c_\alpha>0$. Set
\[
	\vartheta_\alpha:=\frac{\alpha-2}{4(\alpha-1)}.
\]
Then, for a constant $C_\alpha$ depending only on $\alpha$,
\[
	Q_1(\mu_k,\Delta_k)
	\le
	C_\alpha\sqrt{M_1}\,
	\mu_\alpha^{-\frac{1}{2(\alpha-1)}}
	(\mu_k\Delta_k)^{-\vartheta_\alpha}.
\]
Using the geometric contraction of $\mu_k\Delta_k$ and summing through the
stopping time gives
\[
	\sum_{k=1}^{\tau_\epsilon} Q_1(\mu_k,\Delta_k)
	\le
	C_\alpha\sqrt{M_1}\,
	\mu_\alpha^{-\frac{1}{2(\alpha-1)}}
	(\mu_{\tau_\epsilon}\Delta_{\tau_\epsilon})^{-\vartheta_\alpha}.
\]
Moreover, $\tau_\epsilon\le\widehat S_\epsilon$ implies, for a universal
constant $c>0$,
\[
	\mu_{\tau_\epsilon}\Delta_{\tau_\epsilon}
	>
	\frac{3}{16}\widehat\mu_\epsilon\epsilon
	\ge
	c\mu_\alpha^{2/\alpha}
	\epsilon^{2(\alpha-1)/\alpha}.
\]
Substitution this bound yields, for each fixed $\alpha>2$, the oracle complexity is

\[
	N_\epsilon
	=
	O\left(
	\frac{\sqrt{M_1}}{
		\mu_\alpha^{1/\alpha}\epsilon^{(\alpha-2)/(2\alpha)}}
	\right).
\]

\section{Numerical experiments}
\label{sec:numerics}

The goal of this section is to test the performance of A-BLW across several convex problem settings.  All curves use the best point found so far:
\[
	f_{\mathrm{best}}(N)
	:=
	\min\{f(x_t): \;x_t
	\text{ has been queried within the first $N$ oracle calls}\}.
\]
The plotted quantity is the optimality gap $
	f_{\mathrm{best}}(N)-f^\star$,
where \(f^\star\) is computed independently for reporting.   Unless stated otherwise,
\(\mu_0\) is initialized automatically by the two-point rule~\eqref{eq:mu0-def}.

\subsection{Matrix games}
\label{subsec:matrix-games}

Our first test problem is the matrix-game example used by
Nesterov~\cite{nesterov2015universal}.  Given a payoff matrix
\(A\in\mathbb{R}^{n\times m}\), the classical saddle-point formulation is
\begin{equation}
	\min_{x\in\Delta_n}\max_{y\in\Delta_m} \langle x,Ay\rangle,
	\label{eq:matrix-game-saddle}
\end{equation}
where \(\Delta_d=\{u\in\mathbb{R}^d_+:\mathbf{1}^\top u=1\}\) is the standard simplex.
Since the inner maximization is over a simplex, an optimal \(y\) can always
be chosen as a coordinate vector.  Thus \eqref{eq:matrix-game-saddle} is
equivalent to the nonsmooth convex minimization problem
\begin{equation}
	\min_{x\in\Delta_n} f_A(x),
	\qquad
	f_A(x):=\max_{1\le j\le m} a_j^\top x,
	\label{eq:matrix-game-primal}
\end{equation}
where \(a_j=Ae_j\) is the \(j\)-th column of \(A\). We treat the
inner maximum in \eqref{eq:matrix-game-primal} as the objective function and
apply \(\mathrm{BLW}\), \(\mathrm{A\text{-}BLW}\), or Nesterov's universal fast gradient method
directly to \(f_A\) over the simplex.
The experiments use square matrix games with \(n=m=512\).  The entries of
\(A\) are sampled independently from the uniform distribution on
\([-1,1]\), matching the random matrix-game construction in
\cite{nesterov2015universal}.  Each curve is averaged over ten runs.

\paragraph{Effect of bundle memory.}
Figure~\ref{fig:matrix-cuts-average} compares \(\mathrm{A\text{-}BLW}\) with
\(m_{\mathrm{cut}}\in\{1,20,50\}\).  Increasing
the number of retained cuts improves the final gap for this example. As a side note, we also run BLW with $m=512$ and observed linear rate of convergence. The underlying reason is that the objective is piecewise smooth; when $m$ exceeds the number of pieces, BLW converges linearly. We refer interested readers to~\cite{zhang2025linearlyconvergentalgorithmsnonsmooth} for further details.

\begin{figure}[H]
	\centering
	\includegraphics[width=0.82\textwidth]{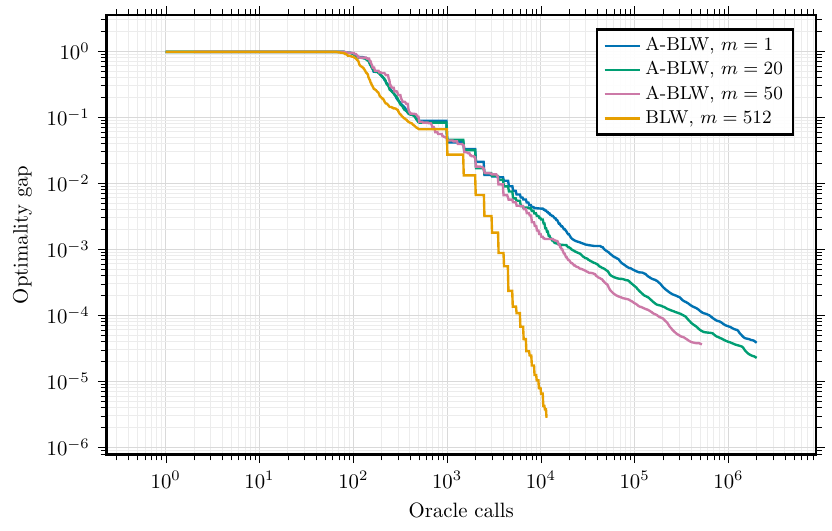}
	\caption{Matrix game with \(n=m=512\).  The plot shows oracle calls
		versus the optimality gap of the best point found so far.  Curves are
		averaged over ten random seeds.}
	\label{fig:matrix-cuts-average}
\end{figure}

\paragraph{Comparison with Nesterov's universal FGM.}
We next compare \(\mathrm{A\text{-}BLW}\) with \(m_{\mathrm{cut}}=1\) to Nesterov's universal fast
gradient method.  The FGM implementation is run on the same ten matrix-game
instances and uses the same primal oracle for \(f_A\).  We test the input
accuracy parameters
\[
	\varepsilon_{\texttt{target}} \in \{10^{-1},10^{-2},10^{-3},10^{-4}\}.
\]
The FGM parameter \(\varepsilon_{\texttt{target}} \) is the method's requested target accuracy~\cite{nesterov2015universal}, not
the true optimality gap used on the vertical axis.

Figure~\ref{fig:matrix-fgm-average} shows a visible sensitivity to
\(\varepsilon_{\texttt{target}} \).  On these instances, the best FGM curve among the tested
inputs is not obtained by the smallest \(\varepsilon_{\texttt{target}} \); choosing
\(\varepsilon\) too small requires excessive oracle calls since the complexity of FGM for nonsmooth problems is $O(\epsilon^{-2})$ even with quadratic growth.  The \(\mathrm{A\text{-}BLW}\) curve does not use
such an input accuracy and continues to reduce the best gap over the plotted
oracle-call budget.

\begin{figure}[H]
	\centering
	\includegraphics[width=0.82\textwidth]{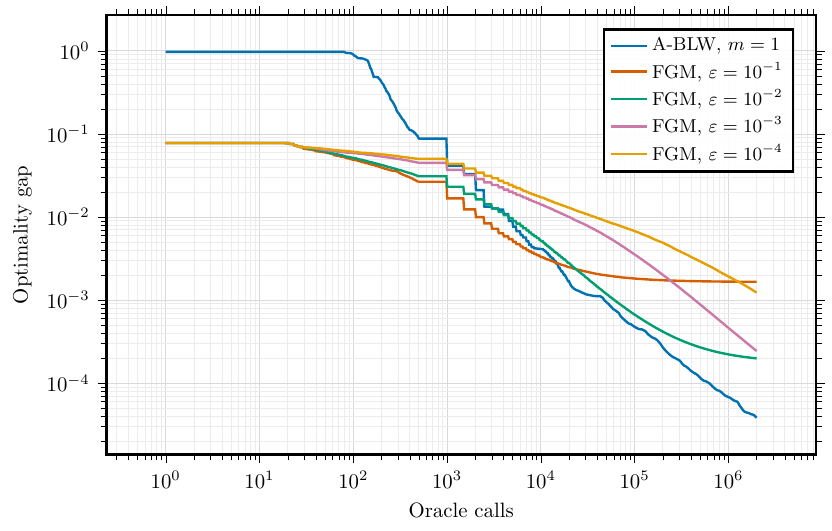}
	\caption{Matrix game with \(n=m=512\): \(\mathrm{A\text{-}BLW}\) with one cut compared
		with Nesterov's universal FGM for several input accuracies
		\(\varepsilon\).  Curves are averaged over the same ten random seeds.}
	\label{fig:matrix-fgm-average}
\end{figure}

\subsection{Geometric median}
\label{subsec:geometric-median}

The second family of experiments is the geometric median problem from
\cite{nesterov2015universal}.  Given centers
\(b_1,\ldots,b_M\in\mathbb{R}^d\), the problem is
\begin{equation}
	\min_{x\in Q} F(x),
	\qquad
	F(x):=\sum_{i=1}^M \|x-b_i\|_2,
	\label{eq:geometric-median}
\end{equation}
where \(Q\subseteq\mathbb{R}^d\) is a closed convex set.  Nesterov's numerical
example uses \(d=256\), \(M=512\), \(Q=\mathbb{R}^d_+\), and centers sampled
uniformly from the box
\[
	0\le b_i^{(k)}\le \frac{1}{\sqrt d},
	\qquad
	k=1,\ldots,d.
\]
The box has Euclidean diameter one, and taking \(M>d\) creates many
nonsmooth locations.

This problem is qualitatively different from the matrix game.  The objective
is still nonsmooth, but it is not a maximum of finitely many affine
functions.  Away from the centers it behaves like a smooth function, and the
nonsmoothness is localized at the data points.  This makes it a useful
second test: the matrix game stresses polyhedral nonsmoothness, while the
geometric median tests whether the bundle-level methods can exploit the
more benign local geometry of a distance-sum objective.

For this example, the true optimality gap $F(x)-F^\star$ is useful for assessing a run,
but it is not a practical termination rule because $F^\star$ is unknown. Figure~\ref{fig:geomed-cert-custom} also reports the scale-normalized product
\[
	\frac{\mu\Delta}{\omega_0},
	\qquad
	\omega_0:=\|g(\bar x_0)\|_2,
\]
where $\bar x_0$ is the auto-initialized starting center. This quantity is
included only as a heuristic termination criterion since the product $\mu\Delta$ is observable and, by
Corollary~\ref{cor:valid-gap}, constitutes the key term in the upper bound on the optimality gap. The normalization by $\omega_0$ is included solely to  ensure that the resulting metric has the same unit as the function gap.

We use \(d=256\), \(M=512\), and \(p=1\), with centers generated from the
box above using seed \(1\).  The initial point is the origin.  The initial
quadratic-growth estimate is chosen by the automatic two-point
initialization, using the origin and the opposite corner
\((1/\sqrt d)\mathbf{1}\).

\begin{figure}[H]
	\centering
	\includegraphics[width=0.82\textwidth]{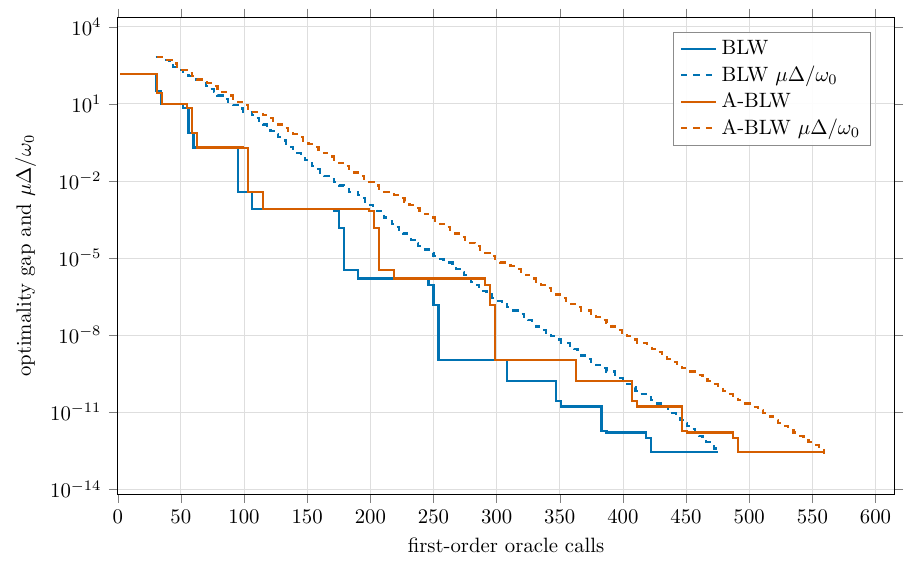}
	\caption{Geometric median instance with $d=256$, $M=512$, and one
		cut.  Solid lines show the true optimality gap of the best point found
		so far. Dashed lines show the heuristic scale-normalized product
		$\mu\Delta/\omega_0$; they are diagnostics, not certified gap bounds.}
	\label{fig:geomed-cert-custom}
\end{figure}

In this experiment, the normalized product decreases in tandem with the true
gap and is close to it near the end of the run. This empirical agreement is
useful for visualizing the method's progress, but it should not be interpreted
as a guarantee.
\subsection{Simplex-constrained quadratics}
\label{subsec:simplex-quadratics}

The final experiment isolates the effect of conditioning.  The objective is
smooth, so the purpose of the test is not to introduce another source of
nonsmoothness.  Rather, it asks whether the accelerated bundle-level
subroutine gives a genuine advantage as the condition number increases.  To this end, we
solve
\begin{equation}
	\min_{x\in\Delta_d}
	q(x)
	:=
	\frac12 \sum_{i=1}^d \lambda_i x_i^2,
	\qquad
	\Delta_d=\{x\in\mathbb{R}^d_+:\mathbf{1}^\top x=1\}.
	\label{eq:simplex-quadratic}
\end{equation}
All eigenvalues are positive, so the minimizer is in the relative interior
of the simplex.  The KKT conditions give the following closed-form solution:
\begin{equation}
	x_i^\star
	=
	\frac{\lambda_i^{-1}}{\sum_{j=1}^d \lambda_j^{-1}},
	\qquad
	q^\star
	=
	\frac{1}{2\sum_{j=1}^d \lambda_j^{-1}}.
	\label{eq:simplex-quadratic-solution}
\end{equation}
Thus the optimal value is known exactly and
can be used to stop the run once the best observed gap is below \(10^{-6}\).
We fix \(d=50\) and choose a geometric spectrum
\[
	\lambda_i
	=
	\kappa^{(i-1)/(d-1)},
	\qquad
	i=1,\ldots,d,
\]
where
$\kappa\in
	\{10,30,100,300,10^3,3\cdot10^3,10^4,3\cdot10^4,
	10^5,3\cdot10^5,10^6\}$.
The horizontal axis in Figure~\ref{fig:simplex-condition} is the restricted condition number
on the simplex.  Both
methods use \(m_{\mathrm{cut}}=1\) and the same initialization.

\begin{figure}[H]
	\centering
	\includegraphics[width=0.82\textwidth]{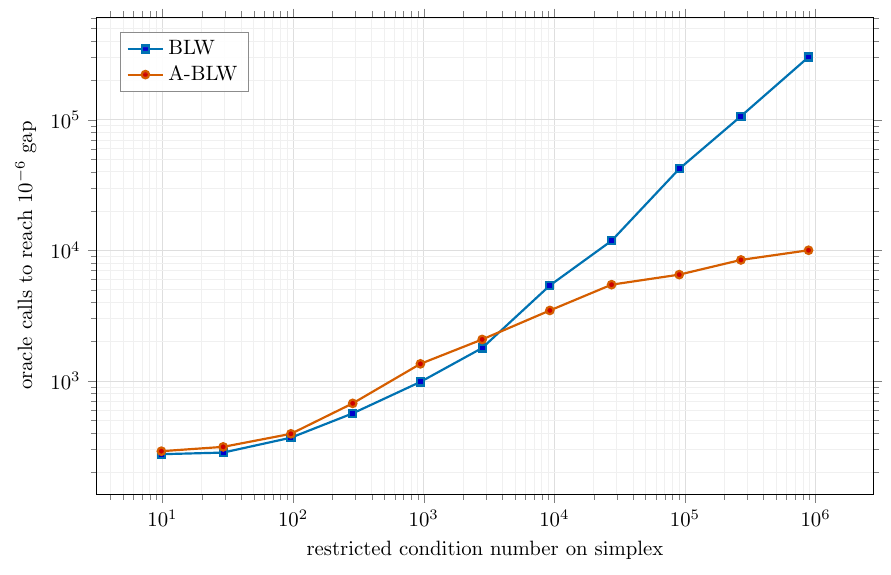}
	\caption{Simplex-constrained quadratic problems.  The plot shows the
		number of oracle calls needed to reach a best-point gap below \(10^{-6}\)
		as a function of the restricted condition number
		\(\kappa_\Delta=L_\Delta/\mu_\Delta\) on the simplex tangent space.}
	\label{fig:simplex-condition}
\end{figure}

The separation between the two methods becomes pronounced as the condition
number increases. For the largest tested instance, whose condition number is
\(8.82\cdot 10^5\), \(\mathrm{BLW}\) requires \(302{,}392\) oracle calls to
reach the target accuracy, whereas \(\mathrm{A\text{-}BLW}\) requires only
\(10{,}023\). This behavior is consistent with the theoretical complexity
bounds: for smooth problems satisfying quadratic growth, \(\mathrm{A\text{-}BLW}\)
has only a square-root dependence on the condition number, while
\(\mathrm{BLW}\) depends on it linearly.

\section{Conclusion}

We introduced the affine W-certificate as a mechanism for exploiting growth
without knowing its modulus. The certificate measures how slowly an affine
minorant can attain a prescribed amount of descent and therefore connects the
geometry of a bundle model with a rigorous optimality-gap bound. Embedding this
idea in the BLCI subroutine and the outer BLW search gives an anytime,
parameter-free method for nonsmooth convex objectives under quadratic growth,
with the optimal $O(M_0^2/(\mu\epsilon))$ leading oracle complexity and a
Fej\'er-monotone sequence of centers. Replacing BLCI by its accelerated
counterpart A-BLCI yields A-BLW, which adapts simultaneously to the unknown
H\"older smoothness regime and the unknown growth conditions. Finally, numerical experiments demonstrate the practical performance of the proposed methods.

\providecommand{\bls}{\textsc{BLS}}

\bibliographystyle{unsrt}
\bibliography{Reference}

@book{complexity,
	Author = {Nemirovsky, A.S. and Yudin, D.B.},
	Isbn = {0-471-10345-4},
	Mrclass = {90C25 (68C25)},
	Mrnumber = {702836},
	Note = {Translated from the Russian and with a preface by E. R. Dawson, Wiley-Interscience Series in Discrete Mathematics},
	Pages = {xv+388},
	Publisher = {John Wiley \& Sons, Inc., New York},
	Series = {A Wiley-Interscience Publication},
	Title = {Problem complexity and method efficiency in optimization},
	Year = {1983}}

@article{nesterov2015universal,
  title={Universal gradient methods for convex optimization problems},
  author={Nesterov, Yu},
  journal={Mathematical Programming},
  volume={152},
  number={1},
  pages={381--404},
  year={2015},
  publisher={Springer}
}

@inproceedings{karimi2016linear,
  title={Linear convergence of gradient and proximal-gradient methods under the polyak-{\l}ojasiewicz condition},
  author={Karimi, Hamed and Nutini, Julie and Schmidt, Mark},
  booktitle={Machine Learning and Knowledge Discovery in Databases: European Conference, ECML PKDD 2016, Riva del Garda, Italy, September 19-23, 2016, Proceedings, Part I 16},
  pages={795--811},
  year={2016},
  organization={Springer}
}

@article{lemarechal1995new,
  title={New variants of bundle methods},
  author={Lemar{\'e}chal, Claude and Nemirovskii, Arkadii and Nesterov, Yurii},
  journal={Mathematical programming},
  volume={69},
  number={1},
  pages={111--147},
  year={1995},
  publisher={Springer}
}

@article{han2023survey,
  title={Survey descent: A multipoint generalization of gradient descent for nonsmooth optimization},
  author={Han, XY and Lewis, Adrian S},
  journal={SIAM Journal on Optimization},
  volume={33},
  number={1},
  pages={36--62},
  year={2023},
  publisher={SIAM}
}

@book{nesterov2003introductory,
  title={Introductory lectures on convex optimization: A basic course},
  author={Nesterov, Yurii},
  volume={87},
  year={2003},
  publisher={Springer Science \& Business Media}
}

@book{lan2020first,
  title={First-order and stochastic optimization methods for machine learning},
  author={Lan, Guanghui},
  volume={1},
  year={2020},
  publisher={Springer}
}

@article{bubeck2015convex,
  title={Convex optimization: Algorithms and complexity},
  author={Bubeck, S{\'e}bastien and others},
  journal={Foundations and Trends{\textregistered} in Machine Learning},
  volume={8},
  number={3-4},
  pages={231--357},
  year={2015},
  publisher={Now Publishers, Inc.}
}

@misc{nesterov2025universal,
  title={Universal Complexity Bounds for Universal Gradient Methods in Nonlinear Optimization},
  author={Nesterov, Yurii},
  year={2025},
  eprint={2509.20902},
  archivePrefix={arXiv},
  primaryClass={math.OC},
  doi={10.48550/arXiv.2509.20902},
  url={https://arxiv.org/abs/2509.20902}
}

@article{li2025simple,
  title={A Simple Uniformly Optimal Method Without Line Search for Convex Optimization},
  author={Li, Tianjiao and Lan, Guanghui},
  journal={Mathematical Programming},
  year={2025},
  publisher={Springer},
  doi={10.1007/s10107-025-02250-z}
}

@misc{zhang2025linearlyconvergentalgorithmsnonsmooth,
  title={Linearly Convergent Algorithms for Nonsmooth Problems with Unknown Smooth Pieces},
  author={Zhang, Zhe and Sra, Suvrit},
  year={2025},
  eprint={2507.19465},
  archivePrefix={arXiv},
  primaryClass={math.OC},
  doi={10.48550/arXiv.2507.19465},
  url={https://arxiv.org/abs/2507.19465}
}

@article{lan2024optimalparameterfreegradientminimization,
  title={Optimal and Parameter-Free Gradient Minimization Methods for Convex and Nonconvex Optimization},
  author={Lan, Guanghui and Ouyang, Yuyuan and Zhang, Zhe},
  journal={Mathematical Programming},
  year={2026},
  publisher={Springer},
  doi={10.1007/s10107-026-02352-2}
}

@article{renegar2022restart,
  author    = {James Renegar and Benjamin Grimmer},
  title     = {A Simple Nearly-Optimal Restart Scheme For Speeding-Up First Order Methods},
  journal   = {Foundations of Computational Mathematics},
  volume    = {22},
  number    = {1},
  pages     = {211--256},
  year      = {2022},
  doi       = {10.1007/s10208-021-09502-2}
}

@article{bolte2017error,
  title={From error bounds to the complexity of first-order descent methods for convex functions},
  author={Bolte, J{\'e}r{\^o}me and Nguyen, Tien Son and Peypouquet, Juan and Suter, Bruce W},
  journal={Mathematical Programming},
  volume={165},
  number={2},
  pages={471--507},
  year={2017},
  publisher={Springer}
}

@article{lan2015bundle,
	title={Bundle-level type methods uniformly optimal for smooth and nonsmooth convex optimization},
	author={Lan, Guanghui},
	journal={Mathematical Programming},
	volume={149},
	number={1},
	pages={1--45},
	year={2015},
	publisher={Springer}
}

@article{doikov2025lower,
	title={Lower complexity bounds for minimizing regularized functions},
	author={Doikov, Nikita},
	journal={Optimization Letters},
	volume={19},
	number={9},
	pages={1759--1778},
	year={2025},
	doi={10.1007/s11590-025-02237-x}
}

@article{bolte2007lojasiewicz,
	title={The {\L}ojasiewicz inequality for nonsmooth subanalytic functions with applications to subgradient dynamical systems},
	author={Bolte, J{\'e}r{\^o}me and Daniilidis, Aris and Lewis, Adrian},
	journal={SIAM Journal on Optimization},
	volume={17},
	number={4},
	pages={1205--1223},
	year={2007},
	publisher={SIAM}
}

@article{zhou2026adabb,
	title={AdaBB: Adaptive Barzilai-Borwein method for convex optimization},
	author={Zhou, Danqing and Ma, Shiqian and Yang, Junfeng},
	journal={Mathematics of Operations Research},
	volume={51},
	number={1},
	pages={715--745},
	year={2026},
	publisher={INFORMS}
}

@article{lan2024auto,
	title={Auto-conditioned primal-dual hybrid gradient method and alternating direction method of multipliers},
	author={Lan, Guanghui and Li, Tianjiao},
	journal={arXiv preprint arXiv:2410.01979},
	year={2024}
}

@article{lan2024projected,
	title={Projected gradient methods for nonconvex and stochastic optimization: new complexities and auto-conditioned stepsizes},
	author={Lan, Guanghui and Li, Tianjiao and Xu, Yangyang},
	journal={arXiv preprint arXiv:2412.14291},
	year={2024}
}

@article{deng2026uniformly,
	title={Uniformly optimal and parameter-free first-order methods for convex and function-constrained optimization},
	author={Deng, Qi and Lan, Guanghui and Lin, Zhenwei},
	journal={INFORMS Journal on Computing},
	year={2026},
	publisher={INFORMS}
}

@article{yagishita2025simple,
	title={Simple linesearch-free first-order methods for nonconvex optimization},
	author={Yagishita, Shotaro and Ito, Masaru},
	journal={arXiv preprint arXiv:2509.14670},
	year={2025}
}

@article{ye2025simple,
	title={A Simple Adaptive Proximal Gradient Method for Nonconvex Optimization},
	author={Ye, Zilong and Ma, Shiqian and Yang, Junfeng and Zhou, Danqing},
	journal={arXiv preprint arXiv:2510.06079},
	year={2025}
}

@article{giang2026auto,
	title={Auto-Conditioned Frank-Wolfe Algorithms},
	author={Giang-Tran, Khanh-Hung and Shafiee, Soroosh and Ho-Nguyen, Nam},
	journal={arXiv preprint arXiv:2605.15512},
	year={2026}
}

@article{ji2026stochastic,
	title={Stochastic Auto-conditioned Fast Gradient Methods with Optimal Rates},
	author={Ji, Yao and Lan, Guanghui},
	journal={arXiv preprint arXiv:2604.06525},
	year={2026}
}

@article{wu2026universal,
	title={Universal and Parameter-free Gradient Sliding for Composite Optimization},
	author={Wu, Yan and Ouyang, Yuyuan and Zhang, Zhe and Luo, Qi},
	journal={arXiv preprint arXiv:2603.23492},
	year={2026}
}

@article{suh2025adaptive,
	title={An Adaptive and Parameter-Free Nesterov's Accelerated Gradient Method for Convex Optimization},
	author={Suh, Jaewook J and Ma, Shiqian},
	journal={arXiv preprint arXiv:2505.11670},
	year={2025}
}

@article{li2024problem,
	title={Problem-parameter-free decentralized nonconvex stochastic optimization},
	author={Li, Jiaxiang and Chen, Xuxing and Ma, Shiqian and Hong, Mingyi},
	journal={arXiv preprint arXiv:2402.08821},
	year={2024}
}

@article{borodich2025nesterov,
	title={Nesterov finds GRAAL: Optimal and adaptive gradient method for convex optimization},
	author={Borodich, Ekaterina and Kovalev, Dmitry},
	journal={arXiv preprint arXiv:2507.09823},
	year={2025}
}

@article{guigues2026universal,
title={Universal subgradient and proximal bundle methods for convex and strongly convex hybrid composite optimization},
author={Guigues, Vincent and Liang, Jiaming and Monteiro, Renato DC},
journal={Journal of Optimization Theory and Applications},
volume={208},
number={3},
pages={112},
year={2026},
publisher={Springer}
}

@article{rodomanov2024universal,
	title={Universal gradient methods for stochastic convex optimization},
	author={Rodomanov, Anton and Kavis, Ali and Wu, Yongtao and Antonakopoulos, Kimon and Cevher, Volkan},
	journal={arXiv preprint arXiv:2402.03210},
	year={2024}
}

@article{ou2025linesearch,
	title={Linesearch-free adaptive Bregman proximal gradient for convex minimization without relative smoothness},
	author={Ou, Hongjia and Latafat, Puya and Themelis, Andreas},
	journal={arXiv preprint arXiv:2508.01353},
	year={2025}
}

@article{sujanani2025efficient,
	title={Efficient parameter-free restarted accelerated gradient methods for convex and strongly convex optimization: A. sujanani, rdc monteiro},
	author={Sujanani, Arnesh and Monteiro, Renato DC},
	journal={Journal of Optimization Theory and Applications},
	volume={206},
	number={2},
	pages={52},
	year={2025},
	publisher={Springer}
}

@article{lin2015adaptive,
  title={An adaptive accelerated proximal gradient method and its homotopy continuation for sparse optimization},
  author={Lin, Qihang and Xiao, Lin},
  journal={Computational Optimization and Applications},
  volume={60},
  number={3},
  pages={633--674},
  year={2015},
  publisher={Springer},
  doi={10.1007/s10589-014-9694-4}
}

@article{fercoq2017restart,
  title={Adaptive restart of accelerated gradient methods under local quadratic growth condition},
  author={Fercoq, Olivier and Qu, Zheng},
  journal={IMA Journal of Numerical Analysis},
  volume={39},
  number={4},
  pages={2069--2095},
  year={2019},
  publisher={Oxford University Press},
  doi={10.1093/imanum/drz007}
}

@article{roulet2020sharpness,
  title={Sharpness, Restart, and Acceleration},
  author={Roulet, Vincent and d'Aspremont, Alexandre},
  journal={SIAM Journal on Optimization},
  volume={30},
  number={1},
  pages={262--289},
  year={2020},
  publisher={SIAM},
  doi={10.1137/18M1224568}
}

@article{wu2026restart,
  title={A Parameter-Free Restart Scheme with Only a Parallelizable {$\log\log(1/\epsilon)$} Overhead},
  author={Wu, Yue and Grimmer, Benjamin},
  journal={arXiv preprint arXiv:2605.30502},
  year={2026},
  doi={10.48550/arXiv.2605.30502}
}

@article{burke1993weak,
  title={Weak Sharp Minima in Mathematical Programming},
  author={Burke, James V. and Ferris, Michael C.},
  journal={SIAM Journal on Control and Optimization},
  volume={31},
  number={5},
  pages={1340--1359},
  year={1993},
  publisher={SIAM},
  doi={10.1137/0331063}
}

@article{drusvyatskiy2018error,
  title={Error Bounds, Quadratic Growth, and Linear Convergence of Proximal Methods},
  author={Drusvyatskiy, Dmitriy and Lewis, Adrian S.},
  journal={Mathematics of Operations Research},
  volume={43},
  number={3},
  pages={919--948},
  year={2018},
  publisher={INFORMS},
  doi={10.1287/moor.2017.0889}
}

@inproceedings{liu2017adaptive,
  title={Adaptive Accelerated Gradient Converging Methods under {H}{\"o}lderian Error Bound Condition},
  author={Liu, Mingrui and Yang, Tianbao},
  booktitle={Advances in Neural Information Processing Systems},
  volume={30},
  pages={3104--3114},
  year={2017}
}

@article{ito2021nearly,
  title={Nearly Optimal First-Order Methods for Convex Optimization under Gradient Norm Measure: An Adaptive Regularization Approach},
  author={Ito, Masaru and Fukuda, Mituhiro},
  journal={Journal of Optimization Theory and Applications},
  volume={188},
  number={3},
  pages={770--804},
  year={2021},
  publisher={Springer},
  doi={10.1007/s10957-020-01806-7}
}

@article{davis2018sharp,
  title={Subgradient Methods for Sharp Weakly Convex Functions},
  author={Davis, Damek and Drusvyatskiy, Dmitriy and MacPhee, Kellie J. and Paquette, Courtney},
  journal={Journal of Optimization Theory and Applications},
  volume={179},
  number={3},
  pages={962--982},
  year={2018},
  publisher={Springer},
  doi={10.1007/s10957-018-1372-8}
}

@article{davis2025local,
  title={A Local Nearly Linearly Convergent First-Order Method for Nonsmooth Functions with Quadratic Growth},
  author={Davis, Damek and Jiang, Liwei},
  journal={Foundations of Computational Mathematics},
  volume={25},
  number={3},
  pages={943--1024},
  year={2025},
  publisher={Springer},
  doi={10.1007/s10208-024-09653-y}
}

@article{zhang2025linearly,
  title={Linearly Convergent Algorithms for Nonsmooth Problems with Unknown Smooth Pieces},
  author={Zhang, Zhe and Sra, Suvrit},
  journal={arXiv preprint arXiv:2507.19465},
  year={2025},
  doi={10.48550/arXiv.2507.19465}
}

@article{lin2026optimal,
  title={Accelerated Prox-Level Methods for Unknown Piecewise-Smooth Optimization {I}: Convex Optimization},
  author={Lin, Zhenwei and Zhang, Zhe},
  journal={arXiv preprint arXiv:2601.14680},
  year={2026},
  doi={10.48550/arXiv.2601.14680}
}

@article{grimmer2023holder,
  title={General {H}{\"o}lder Smooth Convergence Rates Follow From Specialized Rates Assuming Growth Bounds},
  author={Grimmer, Benjamin},
  journal={arXiv preprint arXiv:2104.10196},
  year={2023},
  doi={10.48550/arXiv.2104.10196}
}

@article{malitsky2018golden,
	title={Golden ratio algorithms for variational inequalities},
	author={Malitsky, Yura},
	journal={arXiv preprint arXiv:1803.08832},
	year={2018}
}

@inproceedings{malitsky2020adaptive,
	title={Adaptive Gradient Descent without Descent},
	author={Malitsky, Yura and Mishchenko, Konstantin},
	booktitle={Proceedings of the 37th International Conference on Machine Learning (ICML)(2020)},
	volume={119},
	year={2020}
}

@article{marumo2024parameter,
	title={Parameter-free accelerated gradient descent for nonconvex minimization},
	author={Marumo, Naoki and Takeda, Akiko},
	journal={SIAM Journal on Optimization},
	volume={34},
	number={2},
	pages={2093--2120},
	year={2024},
	publisher={SIAM}
}

@article{drusvyatskiy2016generic,
	title={Generic minimizing behavior in semialgebraic optimization},
	author={Drusvyatskiy, Dmitriy and Ioffe, Alexander D and Lewis, Adrian S},
	journal={SIAM Journal on Optimization},
	volume={26},
	number={1},
	pages={513--534},
	year={2016},
	publisher={SIAM}
}

@article{davis2025gradient,
	title={Gradient descent with adaptive stepsize converges (nearly) linearly under fourth-order growth: D. Davis et al.},
	author={Davis, Damek and Drusvyatskiy, Dmitriy and Jiang, Liwei},
	journal={Mathematical Programming},
	pages={1--66},
	year={2025},
	publisher={Springer}
}

@article{charisopoulos2024superlinearly,
	title={A superlinearly convergent subgradient method for sharp semismooth problems},
	author={Charisopoulos, Vasileios and Davis, Damek},
	journal={Mathematics of Operations Research},
	volume={49},
	number={3},
	pages={1678--1709},
	year={2024},
	publisher={INFORMS}
}

@article{davis2024stochastic,
	title={Stochastic algorithms with geometric step decay converge linearly on sharp functions},
	author={Davis, Damek and Drusvyatskiy, Dmitriy and Charisopoulos, Vasileios},
	journal={Mathematical Programming},
	volume={207},
	number={1},
	pages={145--190},
	year={2024},
	publisher={Springer}
}

@article{kong2025lipschitz,
	title={Lipschitz minimization and the Goldstein modulus: S. Kong, AS Lewis},
	author={Kong, Siyu and Lewis, Adrian S},
	journal={Mathematical Programming},
	pages={1--30},
	year={2025},
	publisher={Springer}
}

@article{li2025subgradient,
	title={Subgradient Regularization: A Descent-Oriented Subgradient Method for Nonsmooth Optimization},
	author={Li, Hanyang and Cui, Ying},
	journal={arXiv preprint arXiv:2505.07143},
	year={2025}
}

@article{cox2014dual,
	author  = {Cox, Bruce and Juditsky, Anatoli and Nemirovski, Arkadi},
	title   = {Dual subgradient algorithms for large-scale nonsmooth learning problems},
	journal = {Mathematical Programming},
	year    = {2014},
	volume  = {148},
	number  = {1-2},
	pages   = {143--180},
	month   = dec,
	doi     = {10.1007/s10107-013-0725-1},
	url     = {https://doi.org/10.1007/s10107-013-0725-1}
}

\appendix

\section{Auxiliary results}\label{sec:appendix-auxiliary}

\subsection{A geometrically capped moment bound}
\label{sec:appendix-moment-bound}

\begin{lemma}[Geometrically capped moment bound]\label{lem:moment}
	Let $\rho\in[0,1]$, $N\ge0$, and $B>0$. Suppose that $a_0,\ldots,a_N\in\R_+$ satisfy
	\[
		\sum_{i=0}^N a_i^{\frac{1+3\rho}{1+\rho}}\le B,
		\qquad
		a_i^{\frac{1+3\rho}{1+\rho}}\le 2^{-i\frac{2\rho}{1+\rho}}B,
		\qquad i=0,\ldots,N.
	\]
	Then the explicit absolute bound
	\begin{equation}
		\sum_{i=0}^N a_i\le 5B^{\frac{1+\rho}{1+3\rho}}
	\end{equation}
	holds.
\end{lemma}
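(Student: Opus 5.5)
Set $p:=\frac{1+3\rho}{1+\rho}\in[1,2]$ and $q:=\frac{2\rho}{1+\rho}\in[0,1]$, so that $p=1+q$. The hypotheses of Lemma~\ref{lem:moment} then read
\[
\sum_{i=0}^N a_i^{p}\le B,
\qquad
a_i\le 2^{-iq/p}B^{1/p}.
\]
The goal is $\sum_i a_i\le 5B^{1/p}$. Normalize by $b_i:=a_i/B^{1/p}$. Then $\sum_i b_i^p\le1$ and $b_i\le 2^{-iq/p}\le 1$, and it suffices to show $\sum_i b_i\le5$.

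I would split the sum at a cutoff index $K$ to be chosen.

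\emph{Head.} For $i<K$, apply H\"older's inequality (or simply $b_i\le b_i^p$ when $b_i\le1$ and $p\ge1$, though that is too lossy):
\[
\sum_{i<K}b_i\le K^{1-1/p}\Bigl(\sum_{i<K} b_i^p\Bigr)^{1/p}\le K^{1-1/p}.
\]

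\emph{Tail.} For $i\ge K$, use the geometric cap:
\[
\sum_{i\ge K}b_i\le \frac{2^{-Kq/p}}{1-2^{-q/p}}.
\]
Note that $1-1/p=q/p$. Writing $r:=q/p\in[0,1/2]$, the total is at most
\[
K^{r}+\frac{2^{-Kr}}{1-2^{-r}}.
\]

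The main obstacle is uniformity as $r\to0$ (the case $\rho\to0$), where $1/(1-2^{-r})\sim 1/(r\ln2)$ blows up. I would choose $K\approx 1/r$. Then $K^r=(1/r)^r\le e^{1/e}<1.45$, and the tail is $2^{-1}/(1-2^{-r})\approx 1/(2r\ln 2)$, which still blows up. So this naive choice fails, and I need a better handling of the tail.

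The fix is to use $b_i\le b_i^p\cdot b_i^{1-p}$ together with $b_i^{-q}\ge 2^{iq^2/p}$? That goes the wrong direction. Instead, in the tail, combine both constraints. For $i\ge K$,
\[
b_i=b_i^{p}\,b_i^{-q}.
\]
This is not obviously small, so instead I would bound $\sum_{i\ge K} b_i$ by a H\"older step with weights: for any $w_i>0$,
\[
\sum b_i\le\Bigl(\sum b_i^p\Bigr)^{1/p}\Bigl(\sum_i w_i\Bigr)^{1-1/p}
\]
with $w_i=1$ up to $\min(N,K)$. When $r$ is small, $1-1/p=r$ is small, so even $\sum_{i\le N}1=N+1$ enters only as $(N+1)^r$. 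That is fine only if $N\lesssim e^{1/r}$; beyond that the cap gives $b_i\le 2^{-ir}$, which is tiny once $i\ge K:=\lceil C/r\cdot\log(1/r)\rceil$.

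So the refined plan takes $K=\lceil \log_2(1/r)/r\rceil+\lceil 1/r\rceil$. Then:
\begin{itemize}[leftmargin=2em]
\item the head is bounded by $K^r\le (2\log_2(1/r)/r+2)^r$, which is uniformly bounded (roughly $\le 2.5$) over $r\in(0,1/2]$ by elementary calculus;
\item the tail is bounded by $2^{-Kr}/(1-2^{-r})\le r\cdot 2^{-1}/(r\ln2\cdot c)\le 1$, using $1-2^{-r}\ge r\ln2/2$ for $r\le 1/2$.
\end{itemize}

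The case $r=0$ ($\rho=0$) is direct: $p=1$ and $\sum b_i\le1$. Summing head and tail gives a constant below $5$. The step I expect to need care is verifying numerically that $\sup_{r\in(0,1/2]}$ of the head-plus-tail bound is at most $5$; I would check the endpoints and the maximum of $(2\log_2(1/r)/r)^r$, which is bounded since $r\log(1/r)\to0$.
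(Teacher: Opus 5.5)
Your final plan is correct and is essentially the paper's proof: normalize, bound the head by H\"older as $K^{r}$ with $r=1-1/p=2\rho/(1+3\rho)\in(0,1/2]$, bound the tail by the geometric cap, and take a cutoff $K\asymp r^{-1}\log_2(1/r)$ so both pieces are uniformly bounded (the paper uses $\lceil (2/r)\log_2(1/r)\rceil$, capped at $N+1$, and gets head $<4$ and tail $\le 2r\le 1$). One small slip: with $1-2^{-r}\ge r\ln 2/2$ your tail bound is $1/\ln 2\approx 1.44$ rather than $1$ (use $1-2^{-r}\ge r/2$, from concavity on $[0,1/2]$, to get exactly $1$), but since your head is at most $\sqrt6\approx 2.45$, the total still stays below $5$.
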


\begin{proof}
	If $\rho=0$, the assumption gives $\sum_{i=0}^N a_i\le B$, which is stronger than the claim. Hence assume $\rho\in(0,1]$. Let
	\[
		p:=\frac{1+3\rho}{1+\rho},\qquad \tau:=1-\frac1p=\frac{2\rho}{1+3\rho},\qquad b_i:=B^{-1/p}a_i.
	\]
	Then
	\[
		\sum_{i=0}^N b_i^p\le1,
		\qquad 0\le b_i\le 2^{-\tau i}.
	\]
	For any $k\in\{0,\ldots,N+1\}$, H\"older's inequality on the first $k$ terms and the geometric cap on the tail give
	\[
		\sum_{i=0}^N b_i\le \sum_{i=0}^{k-1}b_i+\sum_{i=k}^N2^{-\tau i}
		\le k^{1-1/p}+\frac{2^{-k\tau}\left(1-2^{-\tau(N-k+1)}\right)}{1-2^{-\tau}}.
	\]
	Since $1-1/p=\tau$, multiplying by $B^{1/p}$ gives the stated cutoff bound. To obtain the absolute bound, choose
	\[
		k=\min\left\{N+1,\left\lceil \frac{2}{\tau}\log_2\frac1\tau\right\rceil\right\}.
	\]
	If $k=N+1$, the tail is zero. Otherwise, $2^{-k\tau}\le \tau^2$, and hence
	\[
		\sum_{i=0}^N b_i\le \left\lceil \frac{2}{\tau}\log_2\frac1\tau\right\rceil^\tau+\frac{\tau^2}{1-2^{-\tau}}.
	\]
	We bound the two terms by numerical constants. Since $\tau\in(0,1/2]$ and $u=1/\tau\ge2$, the elementary inequality $\log_2 u\le u$ gives
	\[
		\left\lceil \frac{2}{\tau}\log_2\frac1\tau\right\rceil
		\le 1+\frac{2}{\tau^2}\le \frac{3}{\tau^2}.
	\]
	Therefore
	\[
		\left\lceil \frac{2}{\tau}\log_2\frac1\tau\right\rceil^\tau
		\le 3^\tau\tau^{-2\tau}\le \sqrt3 e^{2/e}<4,
	\]
	where we used $\sup_{s>0}s^{-2s}=e^{2/e}$. Also, the concavity of $1-2^{-s}$ on $[0,1/2]$ implies $1-2^{-\tau}\ge \tau/2$, and hence
	\[
		\frac{\tau^2}{1-2^{-\tau}}\le 2\tau\le1.
	\]
	Thus $\sum_{i=0}^N b_i\le5$, and multiplying by $B^{1/p}$ proves the claim.
\end{proof}

\subsection{Technical lemmas for extending the A-BLW method to other growth conditions}
\label{sec:appendix-technical-growth}

When $\rho=1$, each outer iteration of A-BLW costs on the order of
$(M_1/\mu_k)^{1/2}$ oracle evaluations. Therefore, removing the extra
multiplicative logarithm in the convex and higher-order-growth cases requires
a more refined lower bound on $\mu_k$ in terms of the progress measure
$\mu_k\Delta_k$. The following technical lemma is useful for both the convex
and the higher-order growth settings.

\begin{lemma}[Gap parameter controlled by a modulus-dependent gap bound]
	\label{lem:gap-parameter-controlled-by-gap}
	Consider the iterates $\{(\bar x_k,\mu_k,\Delta_k)\}_{k\ge0}$ generated by
	A-BLW, with $\mu_0$ chosen according to~\eqref{eq:mu0-def} and
	\[
		\Delta_0=\frac{2\|f'(\bar x_0)\|^2}{\mu_0}.
	\]
	Let $F:\R_+\to\R_+$ be any nonnegative function such that
	\begin{equation}\label{eq:F-gap-bound}
		f(\bar x_k)-f^*\le F(\mu_k),\qquad k\ge0.
	\end{equation}
	Then, for every $k\ge0$,
	\[
		\Delta_k\le \frac{5}{2}F(\mu_k).
	\]
\end{lemma}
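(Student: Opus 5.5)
We argue by induction on $k$, tracking how $\Delta_k$ is produced inside one outer iteration of A-BLW. The parameter $\Delta_k$ is either $\hat\Delta_k=\frac34\Delta_{k-1}$, with $\mu_k=\mu_{k-1}$, when the safety check certifies. Otherwise it is $2^{j}\hat\Delta_k$ with $\mu_k=2^{-j}\mu_{k-1}$, where $j=\bar r-1$ is the number of false-flag calls in the embedded search. In both cases the claim will follow from two facts. First, a BLCI/A-BLCI call that returns \texttt{False} with parameter $\Delta$ at center $\bar x$ certifies $f(\bar x)-f^*>\frac45\Delta$, since it produces a point with value below $f(\bar x)-\frac45\Delta$. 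Second, the gap bound~\eqref{eq:F-gap-bound} holds at the center where that call was made.

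\textbf{Base case $k=0$.} By~\eqref{eq:mu0-def}, $\mu_0=2\max\{\|f'(\hat x_1)\|^2,\|f'(\hat x_2)\|^2\}/|f(\hat x_1)-f(\hat x_2)|$ and $\bar x_0$ is the larger of the two points. Then $\Delta_0=2\|f'(\bar x_0)\|^2/\mu_0\le |f(\hat x_1)-f(\hat x_2)|=f(\bar x_0)-\min\{f(\hat x_1),f(\hat x_2)\}$. This is at most $f(\bar x_0)-f^*\le F(\mu_0)$, which is stronger than needed.

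\textbf{Inductive step, case 1: the center is not moved.} Here $\mu_k=\mu_{k-1}$ and $\Delta_k=\frac34\Delta_{k-1}\le\frac34\cdot\frac52F(\mu_{k-1})\le \frac52F(\mu_k)$ by the induction hypothesis.

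\textbf{Inductive step, case 2: the center moves to $\bar x_k$.}

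If the first embedded call certifies, then $j=0$ and $(\mu_k,\Delta_k)=(\mu_{k-1},\frac34\Delta_{k-1})$. The same inequality as in case 1 applies.

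If $j\ge1$, then the $j$-th embedded call, with parameters $(2^{-(j-1)}\hat\mu_k,\,2^{j-1}\hat\Delta_k)$ at center $\bar x_k$, returned \texttt{False}. Hence
\[
f(\bar x_k)-f^*>\tfrac45\,2^{j-1}\hat\Delta_k=\tfrac25\Delta_k .
\]
Combining with~\eqref{eq:F-gap-bound} at index $k$ gives $\Delta_k<\frac52(f(\bar x_k)-f^*)\le\frac52F(\mu_k)$. This case is exactly where the constant $\frac52=(\frac45\cdot\frac12)^{-1}$ arises.

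The main subtlety is making sure the false-flag inequality is applied at the correct center and with the correct $\Delta$. The failing call must be the last false one before certification, so that its $\Delta$ is exactly $\Delta_k/2$, and it must be centered at $\bar x_k$ rather than $\bar x_{k-1}$. We also need the induction in the unmoved case to need only monotone use of the previous bound; it does, since $\mu$ is unchanged there. No growth assumption is used anywhere, which matches the statement's generality.
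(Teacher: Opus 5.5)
Your proof is correct and follows the paper's argument. It uses the same induction: the case $\mu_k=\mu_{k-1}$ is handled by $\Delta_k=\frac34\Delta_{k-1}$ and the induction hypothesis, and the case $\mu_k<\mu_{k-1}$ by the false-flag call at gap $\Delta_k/2$ on the same center $\bar x_k$, which forces $f(\bar x_k)-f^*>\frac25\Delta_k$. You read this inequality directly off the improvement-exit test rather than invoking Item~\ref{item:ablw-small-gap-cert} of Proposition~\ref{prop:ablw}, and your base case $\Delta_0\le f(\bar x_0)-f^*$ is sharper than the paper's $\Delta_0\le 2(f(\bar x_0)-f^*)$; both small deviations are valid.
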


\begin{proof}
	Let $\delta_k:=f(\bar x_k)-f^*$. We first verify the claim at $k=0$.
	Since $\bar x_0$ is chosen as whichever of the two points $\hat x_1$ and $\hat x_2$ has the larger objective value, we have
	\[
		|f(\hat x_1)-f(\hat x_2)|\le f(\bar x_0)-f^*=\delta_0.
	\]
	Moreover,
	\[
		\mu_0\ge
		\frac{\|f'(\bar x_0)\|^2}{|f(\hat x_1)-f(\hat x_2)|}.
	\]
	Hence
	\[
		\Delta_0=\frac{2\|f'(\bar x_0)\|^2}{\mu_0}
		\le 2\delta_0
		\le 2F(\mu_0)
		\le \frac{5}{2}F(\mu_0).
	\]

	We now prove the result by induction. Suppose that
	$\Delta_{k-1}\le \frac{5}{2}F(\mu_{k-1})$. There are two cases. First, suppose that
	$\mu_k=\mu_{k-1}$. Then the accepted trial uses the initial gap parameter
	$\widehat\Delta_k=\frac34\Delta_{k-1}$, and hence
	\[
		\Delta_k=\frac34\Delta_{k-1}
		\le \frac34\cdot \frac{5}{2}F(\mu_{k-1})
		\le \frac{5}{2}F(\mu_k).
	\]

	Second, suppose that $\mu_k<\mu_{k-1}$. By construction of the embedded
	search, the trial immediately before the accepted one used the same center
	$\bar x_k$ and gap parameter $\Delta_k/2$, and returned a false
	certificate flag. By Item~\ref{item:ablw-small-gap-cert} of
	Proposition~\ref{prop:ablw}, a false certificate flag is possible only if
	\[
		\delta_k>\frac{2}{5}\Delta_k.
	\]
	Consequently, $\Delta_k<\frac{5}{2}\delta_k\le\frac{5}{2}F(\mu_k)$, where the last inequality
	follows from~\eqref{eq:F-gap-bound}. This completes the induction.
\end{proof}

The next corollary specializes
Lemma~\ref{lem:gap-parameter-controlled-by-gap} to the purely convex case with
a Fej\'er-monotone center sequence.

\begin{corollary}[Convex case with Fej\'er-monotone centers]
	\label{cor:convex-mu-lower-bound}
	Let
	\[
		R_0:=\dist(\bar x_0,X^*)\in(0,+\infty).
	\]
	Then, for every outer iteration $k$,
	\begin{equation}\label{eq:convex-mu-lower-bound}
		\mu_k\ge \frac{\sqrt{\mu_k\Delta_k}}{\sqrt5\,R_0}.
	\end{equation}
\end{corollary}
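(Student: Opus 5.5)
We apply Lemma~\ref{lem:gap-parameter-controlled-by-gap} with a gap function $F$ built from the W-certificate. The certificates give an upper bound on $f(\bar x_k)-f^*$ that depends on $\mu_k$ and that does not involve the true modulus $\mu^*$. We obtain it from Proposition~\ref{prop:gap-bound}, with Fej\'er monotonicity taking the place of quadratic growth.

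The first step is to extract a slowness bound at each center. For every $k\ge1$, the center $\bar x_k$ is certified with $(\mu_k,\Delta_k)$. This happens either through an affine minorant $h$ satisfying $s(h,\bar x_k,\Delta_k)^2\ge 2/(\mu_k\Delta_k)$, or through a direct lower bound $f(\bar x_k)-f^*\le\Delta_k$. For k=0 we only need the base case, which the lemma's proof already handles: $\Delta_0\le 2\delta_0$. In the W-certificate case, we repeat the argument in the proof of Proposition~\ref{prop:gap-bound}. Write $\delta_k:=f(\bar x_k)-f^*$. If $\delta_k>\Delta_k$, then monotonicity (Lemma~\ref{lem:monotonicity}) together with feasibility of the projection $x^*$ gives
\[
\delta_k\le \frac{\dist(\bar x_k,X^*)}{s(h,\bar x_k,\Delta_k)}\le \dist(\bar x_k,X^*)\sqrt{\frac{\mu_k\Delta_k}{2}}.
\]
By the Fej\'er monotonicity in Theorem~\ref{thm:ablw-complexity} (which holds without any growth assumption, since Lemma~\ref{lem:accelerated-safe-two-pass} does not use growth), $\dist(\bar x_k,X^*)\le R_0$. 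Hence in all cases
\[
\delta_k\le \max\Bigl\{\Delta_k,\;R_0\sqrt{\mu_k\Delta_k/2}\Bigr\}.
\]

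This bound still contains $\Delta_k$, so it is not yet of the form $F(\mu_k)$. The main obstacle is to remove this self-reference, and we handle it by going back into the proof of Lemma~\ref{lem:gap-parameter-controlled-by-gap} rather than citing it as a black box. The induction there only uses \eqref{eq:F-gap-bound} in the case $\mu_k<\mu_{k-1}$, through the inequality $\Delta_k<\tfrac52\delta_k$, which comes from a false flag at gap $\Delta_k/2$. Combining $\Delta_k<\tfrac52\delta_k$ with the bound above, the branch $\delta_k\le\Delta_k$ is consistent. In the other branch we get
\[
\Delta_k<\tfrac52 R_0\sqrt{\mu_k\Delta_k/2},
\]
which gives $\Delta_k\le \tfrac{25}{8}R_0^2\mu_k$.

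This is the form we want. Define $F(\mu):=\tfrac{5}{4}R_0^2\mu$, or an equivalent constant. The case $\mu_k=\mu_{k-1}$ then propagates through the induction because $\tfrac34\Delta_{k-1}\le\Delta_{k-1}$ and $F(\mu_k)=F(\mu_{k-1})$. The base case needs $\Delta_0\le\tfrac52F(\mu_0)$. This follows from $\Delta_0\le2\delta_0$ and $\delta_0\le\|f'(\bar x_0)\|R_0$, which together give $\Delta_0^2\le 4\|f'(\bar x_0)\|^2R_0^2 = 2\mu_0\Delta_0R_0^2$, i.e.\ $\Delta_0\le 2R_0^2\mu_0$.

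In every case we therefore reach $\Delta_k\le 5R_0^2\mu_k$, with the constants arranged so the final factor is at most $5$. Multiplying both sides by $\mu_k$ gives $\mu_k\Delta_k\le 5R_0^2\mu_k^2$, and taking square roots gives \eqref{eq:convex-mu-lower-bound}. The remaining work is checking the constants in the false-flag branch and making sure the branch $\delta_k\le\Delta_k$ cannot break the bound. That branch is harmless: combined with $\Delta_k<\tfrac52\delta_k$ it imposes no constraint, but then the bound must come from the W-certificate inequality, so there we use $\delta_k\le \dist\cdot\sqrt{\mu_k\Delta_k/2}$, which holds whenever $\delta_k>\Delta_k/2$ by applying the same argument at gap $\Delta_k/2$ with Lemma~\ref{lem:monotonicity}.
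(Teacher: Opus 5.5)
There is a genuine gap, and it sits in the branch you flag at the end. When $\mu_k<\mu_{k-1}$, the false flag at $(2\mu_k,\Delta_k/2)$ only gives $\delta_k>\tfrac25\Delta_k$. So the case $\tfrac25\Delta_k<\delta_k\le\Delta_k$ can occur, and in that case the certificate at the \emph{current} pair $(\mu_k,\Delta_k)$ says nothing that links $\Delta_k$ to $\mu_k$.

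There are two reasons. First, the accepted A-BLCI call need not produce a W-certificate at all. It may exit through the empty-sublevel test or through $c_t\ge\ell$, and either exit gives only $\delta_k\le\Delta_k$. Second, even a W-certificate is vacuous here. When $\delta_k<\Delta_k$, a constant minorant such as $h\equiv f^*$, or the returned $a_{c_t}\equiv c_t$, has empty sublevel set at level $f(\bar x_k)-\Delta_k$. Hence $s=+\infty$, and $h$ is a $(\mu,\Delta_k)$ W-certificate for every $\mu>0$.

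Your proposed patch does not repair this. Lemma~\ref{lem:monotonicity} gives $s(h,\bar x_k,\Delta_k/2)\le s(h,\bar x_k,\Delta_k)$, which is the wrong direction: a lower bound on slowness at gap $\Delta_k$ cannot be carried down to gap $\Delta_k/2$. In addition, $\delta_k>\Delta_k/2$ is not guaranteed; you only have $\delta_k>\tfrac25\Delta_k$.

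The missing idea is to use the certificate of the \emph{previous} center together with $\delta_k\le\delta_{k-1}$.

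\begin{itemize}
\item If $\mu_k<\mu_{k-1}$, then both safety calls and the first embedded call returned false.
\item Suppose $\delta_{k-1}\le\Delta_{k-1}$. Then Lemma~\ref{lem:accelerated-safe-two-pass} gives $\delta_k<\delta_{k-1}-\tfrac35\hat\Delta_k\le\tfrac{11}{15}\hat\Delta_k<\tfrac45\hat\Delta_k$. Item~\ref{item:ablw-small-gap-cert} of Proposition~\ref{prop:ablw} would then force the first embedded call to certify, so $\mu_k=\mu_{k-1}$, a contradiction.
\item Hence $\delta_{k-1}>\Delta_{k-1}$. Your Fej\'er-based bound at $\bar x_{k-1}$ then gives $\delta_{k-1}\le R_0\sqrt{\mu_{k-1}\Delta_{k-1}/2}=R_0\sqrt{2\mu_k\Delta_k/3}$. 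For $k=1$, use convexity instead: $\delta_0\le\|f'(\bar x_0)\|R_0=R_0\sqrt{\mu_0\Delta_0/2}$.
\item Combining this with $\Delta_k<\tfrac52\delta_k\le\tfrac52\delta_{k-1}$ yields $\Delta_k<\tfrac{25}{6}R_0^2\mu_k$.
\end{itemize}

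With this, your induction closes with constant at most $5$: $2$ at the base, propagated unchanged when $\mu_k=\mu_{k-1}$, and $\tfrac{25}{6}$ in the false-flag branch.

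The paper uses the same information, packaged differently. Fej\'er monotonicity and monotone center values give $\mu_k^{\rm seq}\ge 2\delta_k/R_0^2$. The outer induction of Proposition~\ref{prop:outer-ablw}, run with $\mu_k^{\rm seq}$, gives $\mu_k\ge\mu_k^{\rm seq}/4$; that induction relies exactly on the previous-center certificate and $\delta_k\le\delta_{k-1}$. This yields $\delta_k\le 2\mu_kR_0^2$, a genuine $F(\mu_k)$ with no dependence on $\Delta_k$. Lemma~\ref{lem:gap-parameter-controlled-by-gap} then applies as a black box.
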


\begin{proof}
	Let $\delta_k:=f(\bar x_k)-f^*$. If $\delta_k=0$, then the bound
	$\delta_k\le2\mu_kR_0^2$ is immediate. Otherwise, since the centers have
	nonincreasing objective values, $\delta_j\ge\delta_k$ for every
	$0\le j\le k$. Also, Fej\'er monotonicity gives $\dist(\bar x_j,X^*)\le R_0$ for all such $j$.
	Therefore the sequential modulus
	\[
		\mu_k^{\rm seq}:=
		\inf_{0\le j\le k}
		\frac{2\delta_j}{\dist^2(\bar x_j,X^*)}
	\]
	satisfies
	\begin{equation}\label{eq:convex-seq-lower-bound}
		\mu_k^{\rm seq}\ge \frac{2\delta_k}{R_0^2}.
	\end{equation}
	Applying the same induction argument used in
	Proposition~\ref{prop:outer-ablw} with the sequential modulus
	$\mu_k^{\rm seq}$ gives
	\begin{equation}\label{eq:mu-seq-quarter}
		\mu_k\ge \frac14\mu_k^{\rm seq}.
	\end{equation}
	Combining~\eqref{eq:convex-seq-lower-bound} and
	\eqref{eq:mu-seq-quarter} yields
	\[
		\delta_k\le 2\mu_kR_0^2.
	\]
	Thus Lemma~\ref{lem:gap-parameter-controlled-by-gap} applies with
	$F(\mu)=2\mu R_0^2$, and gives
	\[
		\Delta_k\le \frac{5}{2}F(\mu_k)=5\mu_kR_0^2.
	\]
	Multiplying both sides by $\mu_k$ gives
	$\mu_k\Delta_k\le5\mu_k^2R_0^2$, which is equivalent to
	\eqref{eq:convex-mu-lower-bound}.
\end{proof}

Finally, we give the corresponding estimate under higher-order growth.

\begin{corollary}[Higher-order growth]
	\label{cor:higher-order-mu-lower-bound}
	Suppose that $f$ satisfies the higher-order growth condition
	\begin{equation}\label{eq:higher-order-growth}
		f(x)-f^*\ge \mu_\alpha\dist^\alpha(x,X^*),\qquad x\in X_0,
	\end{equation}
	with $\alpha>2$. Then
	\begin{equation}\label{eq:higher-order-mu-lower-bound}
		\mu_k\ge
		c_\alpha\,\mu_\alpha^{1/(\alpha-1)}
		(\mu_k\Delta_k)^{\frac{\alpha-2}{2(\alpha-1)}},
		\qquad
		c_\alpha:=
		\left(\frac{5}{2}\right)^{-\frac{\alpha-2}{2(\alpha-1)}}
		2^{-\frac{\alpha}{2(\alpha-1)}}.
	\end{equation}
\end{corollary}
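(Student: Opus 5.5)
The plan is to mirror the proof of Corollary~\ref{cor:convex-mu-lower-bound}. We produce a gap bound of the form $f(\bar x_k)-f^*\le F(\mu_k)$, feed it into Lemma~\ref{lem:gap-parameter-controlled-by-gap} to get $\Delta_k\le\frac52F(\mu_k)$, and then solve the resulting inequality for $\mu_k$. Throughout, write $\delta_j:=f(\bar x_j)-f^*$ and $d_j:=\dist(\bar x_j,X^*)$. If $\delta_k=0$, the bound $\delta_k\le F(\mu_k)$ below holds trivially, so assume $\delta_k>0$.

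\emph{Step 1: lower bound on the sequential modulus.} The centers have nonincreasing objective values, so they all lie in $X_0$ and satisfy $\delta_j\ge\delta_k$ for $0\le j\le k$. The growth condition \eqref{eq:higher-order-growth} gives $d_j^2\le(\delta_j/\mu_\alpha)^{2/\alpha}$. Since $1-2/\alpha>0$, this yields
\[
\frac{2\delta_j}{d_j^2}\ge 2\mu_\alpha^{2/\alpha}\delta_j^{(\alpha-2)/\alpha}\ge 2\mu_\alpha^{2/\alpha}\delta_k^{(\alpha-2)/\alpha}.
\]
Hence $\mu_k^{\rm seq}\ge 2\mu_\alpha^{2/\alpha}\delta_k^{(\alpha-2)/\alpha}$.

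\emph{Step 2: transfer to $\mu_k$.} As in Corollary~\ref{cor:convex-mu-lower-bound}, rerun the induction of Proposition~\ref{prop:outer-ablw} (via Proposition~\ref{prop:outer-step}) with $\mu^*$ replaced by $\mu_k^{\rm seq}$ to get $\mu_k\ge\frac14\mu_k^{\rm seq}$. I expect this to be the main point needing care. That induction uses quadratic growth only at the centers $\bar x_0,\dots,\bar x_k$, and these satisfy quadratic growth with modulus $\mu_k^{\rm seq}$ by definition. The base case needs $\mu_0\ge\mu_k^{\rm seq}$, which holds because $\mu_k^{\rm seq}\le\mu_0^{\rm seq}\le 2\|f'(\bar x_0)\|^2/\delta_0\le\mu_0$, as checked in the stopping-time discussion. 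Combining Steps 1 and 2 gives $\mu_k\ge\frac12\mu_\alpha^{2/\alpha}\delta_k^{(\alpha-2)/\alpha}$. Equivalently,
\[
\delta_k\le F(\mu_k),\qquad F(\mu):=(2\mu)^{\frac{\alpha}{\alpha-2}}\mu_\alpha^{-\frac{2}{\alpha-2}},
\]
which is a nonnegative function of $\mu$ alone, as Lemma~\ref{lem:gap-parameter-controlled-by-gap} requires.

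\emph{Step 3: solve for $\mu_k$.} Lemma~\ref{lem:gap-parameter-controlled-by-gap} gives $\Delta_k\le\frac52(2\mu_k)^{\alpha/(\alpha-2)}\mu_\alpha^{-2/(\alpha-2)}$. Multiply by $\mu_k$ and use $1+\frac{\alpha}{\alpha-2}=\frac{2(\alpha-1)}{\alpha-2}$ to obtain
\[
\mu_k^{\frac{2(\alpha-1)}{\alpha-2}}\ge \tfrac25\,2^{-\frac{\alpha}{\alpha-2}}\mu_\alpha^{\frac{2}{\alpha-2}}\,\mu_k\Delta_k.
\]
Raising both sides to the power $\frac{\alpha-2}{2(\alpha-1)}$ gives exactly \eqref{eq:higher-order-mu-lower-bound}, with $c_\alpha=(5/2)^{-\frac{\alpha-2}{2(\alpha-1)}}2^{-\frac{\alpha}{2(\alpha-1)}}$ and the factor $\mu_\alpha^{1/(\alpha-1)}$.
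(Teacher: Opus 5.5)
Your proposal is correct and follows the paper's proof essentially step for step. You lower-bound the sequential modulus via the growth condition, use the sequential-modulus version of the outer-iteration induction to get $\mu_k\ge\frac14\mu_k^{\rm seq}$, and then apply Lemma~\ref{lem:gap-parameter-controlled-by-gap} with $F(\mu)=(2\mu/\mu_\alpha^{2/\alpha})^{\alpha/(\alpha-2)}$ and solve for $\mu_k$; your explicit check of the base case $\mu_0\ge\mu_k^{\rm seq}$, which the paper leaves implicit, is also correct.
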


\begin{proof}
	Let $\delta_k:=f(\bar x_k)-f^*$.
	If $\delta_k>0$, then, since the center gaps are nonincreasing, the
	higher-order growth condition~\eqref{eq:higher-order-growth} gives the
	sequential modulus bound
	\begin{equation}\label{eq:mu-alpha-eps}
		\mu_k^{\rm seq}\ge
		2\mu_\alpha^{2/\alpha}\delta_k^{(\alpha-2)/\alpha}.
	\end{equation}
	The sequential-modulus version of Proposition~\ref{prop:outer-ablw} gives
	\[
		\mu_k\ge \frac14\mu_k^{\rm seq}
		\ge \frac12\mu_\alpha^{2/\alpha}\delta_k^{(\alpha-2)/\alpha}.
	\]
	Since $\alpha>2$, this rearranges to
	\[
		\delta_k
		\le
		\left(\frac{2\mu_k}{\mu_\alpha^{2/\alpha}}\right)^{\frac{\alpha}{\alpha-2}}.
	\]
	The same bound is trivial when $\delta_k=0$.
	Thus Lemma~\ref{lem:gap-parameter-controlled-by-gap} applies with
	\[
		F(\mu)=
		\left(\frac{2\mu}{\mu_\alpha^{2/\alpha}}\right)^{\frac{\alpha}{\alpha-2}},
	\]
	so
	\[
		\mu_k\Delta_k
		\le
		\frac{5}{2}\mu_k
		\left(\frac{2\mu_k}{\mu_\alpha^{2/\alpha}}\right)^{\frac{\alpha}{\alpha-2}}.
	\]
	Equivalently,
	\[
		\mu_k\Delta_k
		\le
		\frac{5}{2}\cdot2^{\frac{\alpha}{\alpha-2}}
		\mu_\alpha^{-\frac{2}{\alpha-2}}
		\mu_k^{\frac{2(\alpha-1)}{\alpha-2}}.
	\]
	Solving for $\mu_k$ gives
	\[
		\mu_k\ge
		\left(\frac{5}{2}\right)^{-\frac{\alpha-2}{2(\alpha-1)}}
		2^{-\frac{\alpha}{2(\alpha-1)}}
		\mu_\alpha^{1/(\alpha-1)}
		(\mu_k\Delta_k)^{\frac{\alpha-2}{2(\alpha-1)}},
	\]
	which is the claimed bound.
\end{proof}

\end{document}